\documentclass[a4paper, 12pt]{article}
\usepackage[utf8]{inputenc}
\usepackage[T2A]{fontenc}
\usepackage{amsmath,amssymb,amsthm}
\usepackage[a4paper,hmargin=2.5cm,vmargin=2.5cm]{geometry}
\usepackage[russian,english]{babel}
\usepackage{zam}

\newtheorem{prop}{Proposition}[section]
\newtheorem{theorr}[prop]{Theorem}
\newtheorem{lemmm}[prop]{Lemma}

\newtheorem{conjecture}[prop]{Conjecture}

\theoremstyle{definition}
\newtheorem{deff}[prop]{Definition}
\newtheorem{nott}[prop]{Notation}

\theoremstyle{remark}
\newtheorem{remm}[prop]{Remark}

\pretitle{%
    \begin{center}
    \large\scshape
}

\posttitle{%
    \end{center}
    \vspace{1.2em}
}

\preauthor{%
    \begin{center}
    \large\scshape
}

\postauthor{%
    \end{center}
    \vspace{1em}
}

\predate{}
\postdate{}

\title{%
    Topological Classification of Univariate Sparse Polynomials via Tropical Geometry
}

\author{%
    Lev Vladimirov\thanks{HSE, Vladimirov.L.S@hse.ru}
}

\date{} 
\begin{document}

\maketitle

\begin{abstract}

We give the topological classification of some simplest univariate complex fewnomials, and find tropical fans of the fewnomial strata with each topological type. Specifically, for the support
$
A=\{d,d+1,d+2,d+3,d+4\},\quad d\geq3,
$
we classify the topological equivalence classes of polynomials having two critical values of ramification type $(2,2)$. There is one class for odd $d$ and two classes for even $d$. We describe the branches at infinity of the corresponding one-dimensional stratum using Newton polytopes of certain complete intersection representations of the sought strata, and determine their tropicalizations. The combinatorial types of the obtained tropical polynomials distinguishes all three kinds of topological equivalence classes. These computations suggest a correspondence between topological equivalence classes of polynomials and combinatorial types of tropicalizations of their branches at infinity, as well as a possible periodicity phenomenon under translations of the support.

\end{abstract}

\pagebreak

\tableofcontents

\pagebreak

\begin{section}{Introduction}
    This paper studies degenerations of univariate complex polynomials using the methods of tropical geometry. A natural question arising in this setting is whether the tropicalization of such a degeneration (and, in particular, the corresponding initial or central fiber) retains information about the geometry and topology of a generic polynomial in the family.

    In this paper, we study a topological version of this question. As is well known, a univariate complex polynomial defines a branched covering $P: \overline{\CC} \to \overline{\CC}$. For such coverings, one can define topological equivalence, considering two coverings equivalent if they can be intertwined by homeomorphisms of the source and the target. A natural invariant preserved under topological equivalence is the ramification data, which does not, in general, uniquely determine the topological equivalence class. In Section \ref{section_topological_classification}, we review the classical problem of topological classification of polynomials, following \cite{ZV, ZD}.

    In Section \ref{section_tropicalization}, relying on \cite{BS, KaKhE}, we recall the necessary concepts from tropical geometry and the theory of Newton polytopes. The key notion for our purposes is the tropicalization of a one-parameter family of complex polynomials. Namely, to a family 
    \[
    g_t(x) = \sum_{i=0}^n a_i(t) x^i
    \]
    one associates a tropical polynomial 
    \[
    \mathrm{Trop}(g_t)(x) = \bigoplus_{i=0}^n \mathrm{val}(a_i) \odot x^i,
    \]
    where $\mathrm{val}(a_i)$ denotes the valuation of the Laurent series $a_i(t)$, i.e., the highest exponent of $t$ with a nonzero coefficient as $t \to \infty$.

    In the remainder of the paper, we investigate the following conjecture in the simplest nontrivial cases: for a ''sufficiently generic'' family of polynomials with a given support and a given topological equivalence class of a generic polynomial in the family, the tropicalization contains enough information to distinguish this topological equivalence class from the others. This approach may provide a new method for studying the topological classification: Theorem \ref{theorem_braid_group_action} provides a clear description of the classes of topological equivalence, but does not provide a simple method for computing their number, even for polynomials of degree greater than $11$ (see, e.g., \cite{ZV}). We also expect tropicalizations to be related to the combinatorics of the Morse polytope described in \cite{EV} and \cite{Vo}.  

    In Section \ref{section_computation}, we investigate this question for the support $A = \{d, d+1, d+2, d+3, d+4\}$, which is the simplest support for which topologically distinct polynomials with the same ramification type can occur. In other words, we consider polynomials where the critical value $0$ has a critical point of local degree $d$, and two other critical values, each having two critical points of local degree $2$.

    In this paper, we take the first steps toward formulating a tropical correspondence theorem: by fixing the support and the topological type as described above, we obtain a one-dimensional stratum on which we can test our conjecture and describe the tropical correspondence in this case, enabling us to refine our hypothesis. This goal is pursued in Propositions \ref{prop_2dim}, \ref{prop_1dim_skew}, and \ref{prop_1dim_coll} and summarized in Theorem \ref{theorem_main_theorem}.

\end{section}

    \begin{section}{Topological classification}\label{section_topological_classification}

    In this section, we briefly review the background of the topological classification. For further details, see, e.g., \cite{ZV}.     
    \begin{subsection}{Problem statement}
        \begin{deff}
            Let $P_1: \overline{\CC} \to \overline{\CC}$ and $P_2: \overline{\CC} \to \overline{\CC}$ be two polynomials of degree $n$, where $\overline{\CC} \simeq S^2$ is the Riemann sphere. We say that $P_1$ and $P_2$ are {\it topologically equivalent} if there exist two orientation-preserving homeomorphisms $h_1, h_2:S^2 \to S^2$, where $h_1$ acts on the target and $h_2$ acts on the source, such that $h_1 \circ P_2 = P_1 \circ h_2$.
        \end{deff}

        The main purpose of topological classification is to compute the number of classes of topological equivalence in the set of all polynomials of degree $n$ and describe them in terms of mathematical invariants. The analogous classification problem can be posed for polynomials with prescribed support.

        \begin{deff}
            A finite subset $A \subset \ZZ$ is called {\it a support}. For a support $A$, denote by $\CC^A$ the set of polynomials whose nonzero coefficients occur only at exponents belonging to $A$.
        \end{deff}

    \end{subsection}
        
    \begin{subsection}{Braid group action}
        A complex polynomial $P$ of degree $n$ defines a branched covering $S^2 \to S^2$ of degree $n$ whose finite branch values are precisely the finite critical values of $P$. Let $y_1, \ldots, y_k$ be all the finite critical values of $P$ (note that $x =\infty$ is also a critical point, with critical value $y = \infty$). Choosing appropriate loops starting and ending at a general point $y_0$ and going around the branch values, we may associate to the polynomial $P$ a sequence of permutations $[g_1, \ldots, g_k] \in S_n$; these permutations generate the monodromy group of $P$. $g_i$ is a permutation of the preimages $P^{-1}(y_0)$ obtained by lifting the loop around $y_i$.

        Let $\lambda^{(i)} \vdash n, \lambda^{(i) = {d_{i1}, \ldots, d_{ip}}}$ be the cyclic structure of the permutation $g_i$, $i = 1, \ldots, k$, or equivalently, the multiplicity partition for the roots of the equation $P(x) = y_i$. The permutations $g_i$ must satisfy the following conditions:

        \begin{enumerate}[label=(\roman*)]
            \item\label{property_passport_cactus_1} The product $g_1g_2\ldots g_k$ is a full cycle: the points $P^{-1}(y_0)$ may be numbered in such a way that $g_1g_2\ldots g_k = (12\ldots n)$. This follows from the fact that the loop going around all the points $y_i$ may also be considered as a loop going around infinity.

            \item\label{property_passport_cactus_2} For a polynomial of degree $n$, the total ramification over the finite critical values equals $n-1$, since $P^\prime$ has degree $n-1$. Thus, the partitions satisfy $\sum\limits_{i = 1}^k\sum\limits_{j = 1}^{p_i}(d_{ij} - 1) = n - 1$.

        \end{enumerate}

        \begin{deff}
            A sequence of permutations $[g_1,\ldots,g_k]$ satisfying the conditions \ref{property_passport_cactus_1} and \ref{property_passport_cactus_2} is called {\it a (rooted) cactus}. A sequence of partitions $[\lambda^{(1)}, \ldots,\lambda^{(k)}]$ of the number $n$ satisfying the condition \ref{property_passport_cactus_2} is called {\it a polynomial passport}.
        \end{deff}

        The following theorem is one of the fundamental results underlying this study.
        \begin{theorr}
        \begin{enumerate}[label=(\roman*)]
            \item For any pairwise distinct $y_1, \ldots, y_k \in \CC$, and for an arbitrary cactus $[g_1, \ldots, g_k]$, there exists a polynomial $P$ with the critical values $y_i$ and the corresponding monodromy generators $g_i$. This polynomial is unique up to an affine change of the variable $x$.
            \item For any polynomial passport $[\lambda^{(1)}, \ldots, \lambda^{(k)}]$ there exists at least one cactus $[g_1, \ldots, g_k]$ such that $\lambda^{(i)}$ is the cyclic structure of $g_i$.   
        \end{enumerate}
        \end{theorr}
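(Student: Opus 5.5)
Part (i) will rest on the Riemann existence theorem, and part (ii) on an explicit combinatorial construction.

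For (i), set $B=\{y_1,\ldots,y_k,\infty\}$ and fix a base point $y_0\notin B$. Take the standard system of loops $\gamma_1,\ldots,\gamma_k$ generating $\pi_1(\overline{\CC}\setminus B,y_0)$, so that $\gamma_1\cdots\gamma_k$ is homotopic to a loop around $\infty$. The assignment $\gamma_i\mapsto g_i$ defines a homomorphism $\pi_1\to S_n$.

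\begin{enumerate}
    \item \textbf{Transitivity.} The image is transitive, because it contains the full cycle $g_1\cdots g_k$.
    \item \textbf{Existence of a covering.} By covering space theory, the homomorphism gives a connected unramified $n$-sheeted covering of $\overline{\CC}\setminus B$. Filling in the punctures, it extends to a branched covering $X\to\overline{\CC}$ of a compact surface.
    \item \textbf{Complex structure.} By the Riemann existence theorem, $X$ carries a unique complex structure making the map holomorphic.
    \item \textbf{Genus.} The monodromy at $\infty$ is $(g_1\cdots g_k)^{-1}$, an $n$-cycle, so it contributes ramification $n-1$. Riemann--Hurwitz then gives
    \[
    2-2g(X)=2n-\sum_{i,j}(d_{ij}-1)-(n-1)=2n-2(n-1)=2,
    \]
    using condition \ref{property_passport_cactus_2}. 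Hence $g(X)=0$ and $X\simeq\overline{\CC}$.
    \item \textbf{The map is a polynomial.} The map is a rational function with a single preimage of $\infty$, of full multiplicity $n$. After a Möbius change of coordinates sending that point to $\infty$, it is a polynomial of degree $n$ with critical values $y_i$ and monodromy $g_i$.
    \item \textbf{Uniqueness.} Suppose two polynomials realize the same data. Their coverings are isomorphic over $\overline{\CC}$, since the monodromy representations agree with a consistent labeling of the fiber. The isomorphism is a biholomorphism of $\overline{\CC}$ fixing $\infty$, hence an affine map $x\mapsto ax+b$.
\end{enumerate}

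The one subtlety in (i) is matching conventions: the cactus is rooted, so the labeling of the fiber is fixed up to the conjugation compatible with $(12\ldots n)$. Conjugation by powers of $(12\ldots n)$ corresponds to the rotation part of the affine ambiguity.

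For (ii), I would build a cactus directly and argue by induction on $k$ and on $n$.

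\begin{enumerate}
    \item \textbf{Key lemma.} If $c$ is an $n$-cycle and $\lambda\vdash n$ has $p$ parts, then $c=g\,h$ with $g$ of cycle type $\lambda$ and $h$ an $(n-(n-p))=p$-cycle. More precisely, I would use the classical statement for the minimal case $\sum(\ell(g)+\ell(h))=n-1$, where $\ell(\sigma)=n-\#\text{cycles}(\sigma)$. Concretely, split $(12\ldots n)$ into consecutive blocks of sizes $d_{11},\ldots,d_{1p}$: let $g_1$ be the product of the corresponding consecutive cycles, and let $g_1^{-1}(12\ldots n)$ be a $p$-cycle on the block endpoints.
    \item \textbf{Induction.} Apply the key lemma to $g_1$ first, then recurse on the remaining $p$-cycle with the remaining partitions $\lambda^{(2)},\ldots,\lambda^{(k)}$. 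These are viewed as acting on the $p$ block representatives, padded by fixed points.
    \item \textbf{Bookkeeping.} Condition \ref{property_passport_cactus_2} guarantees that the total length budget $\sum\ell(g_i)=n-1=\ell(12\ldots n)$ is consumed exactly. That exact consumption is the condition for the product to be a full cycle with all factors nested: a tree-like, or planar, factorization.
\end{enumerate}

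The main obstacle is this last step. The partitions $\lambda^{(2)},\ldots$ live on $n$ points, not on $p$ points. Their nontrivial cycles must fit into a structure compatible with the blocks, and a cycle may need to use points inside different blocks.

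To handle this, I would instead use the cactus picture: a planar tree of $k$ colored polygon-vertices. Existence then reduces to building a bicolored planar tree with prescribed degrees, and such a tree always exists when the count $\sum(d_{ij}-1)=n-1$ matches the edge count. I would build it greedily by attaching one $d$-gon at a time to a leaf. The monodromy read off from that tree is the required cactus, and its product is the full cycle by planarity.
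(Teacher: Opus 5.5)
The paper gives no proof of this statement; it is quoted as classical background (after Zvonkin), so there is no argument of the paper to compare with. Your part (i) is the standard Riemann existence plus Riemann--Hurwitz argument and is correct. The closing remark about rotations is not needed: simultaneously conjugate tuples give isomorphic coverings, which is all the uniqueness step uses.

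Part (ii), however, has a genuine gap. You rightly observe that the block construction fails because the nontrivial cycles of $g_2,\dots,g_k$ cannot be placed freely, but your replacement drops the corresponding constraint altogether. A bipartite tree joining $n$ points to polygons of prescribed sizes $d_{ij}\ge 2$ exists trivially once $1+\sum(d_{ij}-1)=n$. It defines permutations $g_i$ of cycle type $\lambda^{(i)}$ only if \emph{no point lies on two polygons of the same color}. Nothing in ``attach one $d$-gon at a time to a leaf'' enforces this: a leaf of a color-$i$ polygon cannot receive another color-$i$ polygon. You also do not show that the process never gets stuck, and ``the product is the full cycle by planarity'' is asserted, not proved.

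Both points can be repaired as follows.
\begin{enumerate}
\item \textbf{Construction.} Start from one point with all $g_i=\mathrm{id}$. Repeatedly attach a remaining $d$-gon of some color $i$ at an existing point $x$ fixed by the current $g_i$, adding $d-1$ new points.
\item \textbf{A move always exists.} If the current configuration has $V$ points, its total ramification is $V-1$. A color covering all $V$ points uses $p_i\le V/2$ polygons and so contributes ramification $V-p_i\ge V/2$. Two such colors would already exceed $V-1$, so at most one color is saturated. If only polygons of a single color $i$ remain, with sizes $e_1,\dots,e_r$, and color $i$ already covers all $V$ points, then the nontrivial parts of $\lambda^{(i)}$ would sum to $V+\sum e_s=n+r>n$, which is impossible.
\item \textbf{The product stays a full cycle.} Set $u=g_1\cdots g_i$ and $\sigma=(x\,y_1\ldots y_{d-1})$. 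Then $g_1\cdots(g_i\sigma)\cdots g_k=(u\sigma u^{-1})\,c$, where $c$ is the old product, a cycle on the old points. The factor $u\sigma u^{-1}$ is a $d$-cycle on $y_1,\dots,y_{d-1}$ and one old point. Two cycles sharing exactly one point multiply to a cycle on the union, so no planarity is needed.
\item \textbf{Conclusion.} After all polygons are placed there are $1+\sum(d_{ij}-1)=n$ points, and a simultaneous conjugation makes the product $(12\ldots n)$.
\end{enumerate}
Alternatively, (ii) follows at once from the Goulden--Jackson count $n^{k-1}\prod_i (q_i-1)!/|\mathrm{Aut}\,\lambda^{(i)}|>0$ of such factorizations, where $q_i$ is the number of parts of $\lambda^{(i)}$.
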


        Let an operation $\sigma_i$ ($i = 1, \dots, k-1$) act on a cactus $[g_1, \dots, g_k]$ in the following way:
    \begin{equation*}
    \sigma_i(g_i) = g_{i+1}, \quad \sigma_i(g_{i+1}) = g_{i+1}^{-1} g_i g_{i+1}, \quad \text{and} \quad \sigma_i(g_j) = g_j \text{ for } j \neq i.
    \end{equation*}
    
    A direct calculation shows that this action preserves the product $g_1 \dots g_k$. The following theorem of S.~Zdravkovska \cite{ZD} is fundamental:

    \begin{theorr}\label{theorem_braid_group_action}
    \begin{enumerate}
        \item The operations $\sigma_i$, $i = 1, \dots, k-1$ generate an action of the braid group $B_k$. That is, they satisfy the relations
    \begin{equation*}
    \sigma_i \sigma_j = \sigma_j \sigma_i \text{ for } |i - j| \ge 2, \quad \text{and} \quad \sigma_i \sigma_{i+1} \sigma_i = \sigma_{i+1} \sigma_i \sigma_{i+1}.
    \end{equation*}

        \item The topological equivalence classes of complex polynomials are in one-to-one correspondence with the orbits of this action.
    \end{enumerate}
        
    \end{theorr}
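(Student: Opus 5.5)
Part (1) is a direct algebraic verification, and I would do it first. Each $\sigma_i$ is a bijection on cacti. Its inverse sends $g_i \mapsto g_i g_{i+1} g_i^{-1}$ and $g_{i+1}\mapsto g_i$. It preserves the cyclic structures (up to the swap of positions) and it preserves the product, since $g_i g_{i+1} = g_{i+1}(g_{i+1}^{-1}g_ig_{i+1})$. So it maps cacti to cacti. For $|i-j|\ge 2$ the two operations touch disjoint pairs of entries and therefore commute. For the braid relation I would compute both sides on a triple $(a,b,c)=(g_i,g_{i+1},g_{i+2})$.

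Applying $\sigma_i\sigma_{i+1}\sigma_i$ (rightmost first), I expect
\[
(a,b,c)\mapsto(b,b^{-1}ab,c)\mapsto(b,c,c^{-1}b^{-1}abc)\mapsto(c,\,c^{-1}bc,\,c^{-1}b^{-1}abc).
\]
Applying $\sigma_{i+1}\sigma_i\sigma_{i+1}$, I expect
\[
(a,b,c)\mapsto(a,c,c^{-1}bc)\mapsto(c,c^{-1}ac,c^{-1}bc)\mapsto(c,\,c^{-1}bc,\,c^{-1}b^{-1}cc^{-1}acc^{-1}bc),
\]
and the last entry simplifies to $c^{-1}b^{-1}abc$. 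Once the composition convention (left versus right action) is fixed consistently, the two sides agree. This gives a $B_k$-action.

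For part (2) I would pass through the classical description of polynomial coverings. Fix the set of critical values, or, since homeomorphisms may move them, consider the configuration space $\mathrm{Conf}_k(\CC)$. The fundamental group of $\mathrm{Conf}_k(\CC)$ is $B_k$. A polynomial up to affine change of variable is determined, by the preceding existence-and-uniqueness theorem, by its critical values together with a cactus, that is, by a homomorphism $\pi_1(\CC\setminus\{y_1,\dots,y_k\},y_0)\to S_n$ up to simultaneous conjugation, which is encoded by a rooted cactus. Topological equivalence allows an orientation-preserving homeomorphism $h_1$ of the target fixing $\infty$. Its isotopy class relative to the marked points lies in the mapping class group of the $k$-punctured plane, which is $B_k$. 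The induced action on the standard generators of the free group $\pi_1$ is the Hurwitz (Artin) action, and on the images $g_i$ this is exactly $\sigma_i$.

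It therefore remains to prove that two cacti give equivalent polynomials iff they lie in the same orbit, modulo the normalization of the rooting. In the forward direction, if $h_1\circ P_2 = P_1\circ h_2$, I would isotope $h_1$ so that it carries the critical values of $P_2$ to those of $P_1$ and the base point to the base point. Then $h_1$ transports the standard loop system for $P_2$ to some loop system for $P_1$. Any two standard systems differ by a braid, by Artin's theorem, and $h_2$ identifies the fibers, so the cacti differ by a braid and a relabeling. The relabeling must fix the product $(12\ldots n)$, so it is a power of that cycle, a rotation; this is presumably absorbed into the notion of cactus, or into orbits up to that conjugation. In the reverse direction, a braid is realized by a homeomorphism of the target, which lifts through the coverings to a homeomorphism of the source by covering space theory. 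The lift extends over the branch points and over $\infty$ because the monodromies match.

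The main obstacle is the careful bookkeeping in the identification of the mapping class group with $B_k$ and of its action on $\pi_1$ with $\sigma_i$. I would invoke Artin's classical theorem for this rather than reprove it. The subtle point is that the equivalence of cacti must be taken up to the conjugation by powers of $(12\ldots n)$, which fixes the product, and I would check that the paper's notion of orbit implicitly includes this.
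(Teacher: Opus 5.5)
The paper does not prove Theorem~\ref{theorem_braid_group_action}. It quotes the result from Zdravkovska~\cite{ZD}, as surveyed in~\cite{ZV}, and uses it only as a black box in Proposition~\ref{proposition_number_of_topological_classes}. So your proposal is not an alternative to an argument in the paper. It reconstructs the classical proof behind the citation: the Hurwitz action on geometric loop systems, Artin's theorem, and the classification of unramified coverings by monodromy. It is sound in outline. Your computations in part (1) are correct. Both sides of the braid relation are palindromic words, so your check is identical whichever composition order you use, and the convention worry you raise does not actually arise.

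In part (2), three points deserve tightening.

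\textbf{The point at infinity.} The definition of topological equivalence allows any orientation-preserving homeomorphism of $S^2$, so you need a reason why $h_1(\infty)=\infty$ (and hence $h_2(\infty)=\infty$). For $k\ge 2$, the point $\infty$ is the only branch value with a single preimage: a finite critical value of cycle type $[n]$ would already absorb all of the ramification $n-1$. The case $k=1$ is trivial.

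\textbf{The mapping class group.} The mapping class group of the plane with $k$ marked points is not $B_k$ but $B_k/\langle\Delta^2\rangle$, where $\Delta^2$ is the full twist, i.e.\ the Dehn twist about a large circle. $B_k$ is the mapping class group of the disk with $k$ marked points relative to its boundary. That is the group you actually need, since you must keep the base point $y_0$ fixed on a large circle to get a genuine action on $\pi_1$.

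\textbf{The rotation ambiguity.} The issue you flag at the end resolves itself, and it is the same phenomenon as the previous point. With the paper's convention, applying $\sigma_1,\sigma_2,\ldots,\sigma_{k-1}$ in turn sends $[g_1,\ldots,g_k]$ to $[g_2,\ldots,g_k,c^{-1}g_1c]$, where $c=g_1\cdots g_k$. Hence the $k$-th power of this braid, which is $\Delta^2$, acts by simultaneous conjugation by $c=(12\ldots n)$. The centralizer of an $n$-cycle in $S_n$ is the cyclic group it generates. Therefore every relabeling of the fiber that preserves the normalization $g_1\cdots g_k=(12\ldots n)$ is already realized inside a single $B_k$-orbit. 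No extra quotient by rotations is needed. Likewise, the $\langle\Delta^2\rangle$ ambiguity in passing from the disk to the plane changes cacti only by these relabelings.
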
 

 \end{subsection}
\end{section}

\begin{section}{Introduction to tropical degenerations}\label{section_tropicalization}

In this section, we recall some basics of tropical geometry. For details, see, e.g., \cite{BS,KaKhE}.

\begin{subsection}{Newton polytopes }
    \begin{deff}
For each monomial $a_{\alpha}x^{\alpha}$ with $a_{\alpha} \neq 0$, where $\alpha = (\alpha_1, \ldots, \alpha_n) \in \ZZ^n$, we associate the exponent vector $\alpha$. {\it The Newton polytope} of a Laurent polynomial $F$ is the convex hull of all such exponent vectors.
\end{deff}

\begin{deff}
A face $\Gamma$ of a polytope $\Delta$ is said {\it to be exposed by} $v \in (\RR^n)^*$ if

$\Delta^v = \{a \in \Delta: \langle v, a \rangle = \max\limits_{b\in\Delta} \langle v, b\rangle \}$. We denote $\max\limits_{b\in\Delta} \langle v, b\rangle $ by $\Delta(v)$. We call $v$ {\it an outward normal vector} exposing $\Gamma$.
\end{deff}

\begin{deff}
Let $\Delta_1, \Delta_2, \dots, \Delta_{k}$ be a collection of polytopes.
Faces $\Gamma_1 \subset \Delta_1, \dots, \Gamma_k \subset \Delta_k$ are called {\it compatible} if they admit a common outward normal vector $v$ and each $\Gamma_i$ is the face of $\Delta_i$ exposed by $v$.
\end{deff}

\end{subsection}

\begin{subsection}{Toric manifolds}

\begin{deff}
    An irreducible $n$-dimensional complex algebraic manifold $X$ with an action of the torus $(\CC^*)^n$ such that one of the orbits is Zariski dense in $X$ is called {\it toric}.
\end{deff}

 Let $C \subset (\CC^*)^n$ be a smooth algebraic curve. 

 \begin{deff}
{\it A smooth toric compactification} of a smooth complex algebraic curve $C \subset(\CC^*)^n$ is the closure $\overline{C}$ of $C$ in some compact toric manifold $(\CC^*)^n \subset X$ with the finite set $I:=\overline{C} \setminus C$ which is called the set of {\it points at infinity} of $C$.
\end{deff}

\begin{theorr}
     Every smooth complex algebraic curve $C \subset(\CC^*)^n$ admits a smooth toric compactification after passing to a suitable smooth complete toric variety.
\end{theorr}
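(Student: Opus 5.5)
The plan is to construct the compact toric manifold $X$ from a complete unimodular fan in $(\RR^n)^*$ adapted to the asymptotic directions of $C$, and then to check that $\overline{C}\setminus C$ is finite and lies in the open orbits of boundary divisors. Two readings of the statement matter here. First, I read ``smooth'' as the requirement that $X$ be a smooth complete toric variety, which is what the definition of a smooth toric compactification asks for. Second, I assume $C$ is closed in $(\CC^*)^n$, as every curve given by equations in the torus is; this is what makes $C$ open in $\overline{C}$. I argue componentwise, so $C$ may be assumed irreducible.

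\emph{Step 1: asymptotic directions.} Let $\widetilde{C}$ be the smooth projective model of $C$. Then $\widetilde{C}\setminus C$ is a finite set of punctures. Near a puncture $p$, fix a local parameter $t$. Each coordinate function $x_i$ is a nonzero Laurent series in $t$, so
\[
x_i = c_i\, t^{-v_i}(1+o(1)), \qquad c_i\neq 0 .
\]
This assigns to $p$ an integer vector $v_p=(v_1,\dots,v_n)$. We have $v_p\neq 0$: otherwise all $x_i$ would extend holomorphically and nonvanishingly to $p$, so $p$ would map into $(\CC^*)^n$, contradicting the fact that $C$ is closed there. The result is a finite set of nonzero rays $\RR_{\ge 0}v_p$, namely the tropical fan of $C$.

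\emph{Step 2: the fan.} Take any complete fan containing all the rays $\RR_{\ge 0}v_p$, for instance the normal fan of a polytope whose normal directions include them. Refine it to a complete unimodular fan $\Sigma$ by the standard toric resolution: repeated stellar subdivisions, which keep every existing ray as a ray. Then $X=X_\Sigma$ is a compact toric manifold.

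\emph{Step 3: where the limits land.} Fix a puncture $p$ and write $v_p=k e$ with $e$ primitive. By Step 2, $e$ spans a ray of $\Sigma$. Choose a smooth cone $\langle e,e_2,\dots,e_n\rangle$ of $\Sigma$. Its chart has coordinates $u_1,\dots,u_n$ given by Laurent monomials in the $x_i$, obtained by pairing with the dual basis. Along the branch, $u_1\sim t^{k}$ and $u_j\to$ nonzero constants for $j\ge 2$. Hence the branch extends holomorphically to $p$, and its limit lies in the open orbit $O_e$ of the divisor $D_e$. Therefore the closure $\overline{C}$ in $X$ is the image of $\widetilde{C}$, and
\[
\overline{C}\setminus C=\{\text{limits of the punctures}\}
\]
is finite. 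This gives the required set $I$ of points at infinity, and each point of $I$ lies in a codimension-one orbit. A purely algebraic check of finiteness is also available: $\overline{C}\cap(X\setminus(\CC^*)^n)$ is a proper closed subset of the irreducible projective curve $\overline{C}$, hence finite.

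I expect the main obstacle to be Step 3 in the presence of multiplicity, that is, when $k>1$ or when several punctures share a ray. In that case $\overline{C}$ may acquire singular points on $D_e$. I would handle this by keeping the statement at the level of the definition, which requires $X$ smooth and $I$ finite. If smoothness of $\overline{C}$ itself is also wanted, I would normalize: replace $\overline{C}$ by $\widetilde{C}$, which maps to $X$ and extends the inclusion of $C$. Finally, I would verify that the stellar refinements in Step 2 do not destroy the inclusion of the tropical rays; this holds because stellar subdivision only adds rays.
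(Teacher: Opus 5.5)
The paper gives no proof of this statement. It is quoted as background from the literature cited at the start of that section (\cite{BS,KaKhE}), so there is no internal argument to compare with.

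Your proof is correct and is the standard argument. You pass to the smooth model $\widetilde C$, collect the valuation vectors of the punctures, put their rays into a complete unimodular fan, and check that each puncture lands in the open orbit $O_e$ of a boundary divisor. As you note yourself, under the paper's literal definition (only $X$ smooth and $I$ finite) the statement is immediate for any compact toric manifold, e.g.\ $(\PP^1)^n$. What your fan construction adds is the stronger fact that $I$ lies in codimension-one orbits. The orbit is fixed by the primitive valuation direction, and the point by the initial coefficients modulo the weight action. This is how the paper actually uses points at infinity later.

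Two remarks.

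First, the signs in Step~3 clash with the usual toric dictionary. With $t\to 0$, $x_i=c_it^{-v_i}$ and the chart $\operatorname{Spec}\CC[\sigma^\vee\cap M]$, the dual-basis monomial $u_1$ behaves like $t^{-k}$, not $t^{k}$, so the branch leaves the chart. You can fix this in either of two ways:
\begin{itemize}
\item put the rays of $-v_p=(\operatorname{ord}_p x_i)_i$ into the fan; or
\item keep $v_p$ (which is the paper's valuation vector) and use the outward-normal convention implicit in the paper, where the chart coordinates are the inverse monomials $x^{-m_j}$.
\end{itemize}
With either fix, Step~3 goes through verbatim.

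Second, your hedge about smoothness of $\overline C$ is forced, not merely cautious. Take the smooth irreducible curve $x(xy-x-1)(xy-x-2)=1$. Its two branches at $x=\infty$ are $y=1+1/x+\dots$ and $y=1+2/x+\dots$. They have the same valuation vector and the same initial coefficients, so in every complete toric variety they converge to the same boundary point, and $\overline C$ is singular there. Hence reading ``smooth'' as a condition on $X$ is the only reading under which the statement holds. Working on $\widetilde C$ is also the right way to make sense of the local parameter at $z\in I$ that the paper uses when it defines branches at infinity.
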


\end{subsection}

\begin{subsection}{Tropical geometry}

    \begin{deff}
The {\it tropical semifield} $\mathbb{T}$ is defined as the set $\mathbb{R} \cup \{-\infty\}$ equipped with the operations
\[
a \odot b := a + b \quad \text{and} \quad a \oplus b := max(a,b) \text{.}
\]
\end{deff}

\begin{remm}
These operations reflect the behavior of degrees of polynomials under multiplication and addition. In particular, the degree of a sum cannot exceed the maximum of the degrees of its summands, and equality may fail because of cancellation.
\end{remm}

In this semifield, the additive identity is $-\infty$, while the multiplicative identity is $0$. This is also reflected in subsequent definitions. We next introduce the main geometric object used in this paper.

\begin{deff}
A {\it tropical polynomial} of degree $n$ is an expression
\[
a_0 \oplus a_1 \odot x \oplus \dots \oplus a_n \odot x^n,
\]
where  $a_i \in \mathbb{T}$ for $0 \leq i \leq n$ and $a_n \neq -\infty$ .
\end{deff}

\begin{remm}
A tropical polynomial has a formal expression analogous to an ordinary polynomial, but its operations are interpreted in the tropical semifield. 
\end{remm}

\begin{deff}
In the notation of the definition above, choose a local parameter $u$ on a small neighborhood $U$ of a point $z \in \overline{C}\setminus C$, $u(z) = 0$. Consider the parameter $t = u^{-1}$, so that natural projection $\pi: \overline{C}\setminus I \to C$ is represented by $(f_1(t), \dots, f_n(t))$, where the $f_i(t) = \sum\limits_{-\infty}^{k \leq d_i} a_{i,k} t^k$ ($a_{i,d_i} \neq 0$) are Laurent series in the variable $t$. This parametrization of the curve $C$ is called {\it a branch of} $C$ {\it at infinity}. The vector $(d_1, d_2, \dots, d_n) \in \mathbb{Z}^n$ is called {\it the valuation vector} of the branch at the point $z$. The integer $d_i$ is called {\it the valuation} of $f_i$ at $z$, and $a_{i,d_i}$ is called {\it the initial coefficient}. The valuation and the initial coefficient are denoted by $\mathrm{val}_{f_i}$ and $\mathrm{in}_{f_i}$, respectively.

\end{deff}

\begin{remm}
    We use the convention 
    \[
        \operatorname{val}_f= \max\{{k: a_k \neq 0}\} \text{,}
    \]
    corresponding to the asymptotic order of $f(t)$ as $t \to \infty$. By this convention $\operatorname{val}_0 = -\infty$.
\end{remm}
\begin{remm}
The valuation does not depend on the choice of $t$. The initial coefficient depends on the choice of $t$, but under the change of local parameter $t \mapsto st$ the initial term $a_{i,d_i}t^{d_i} \mapsto (a_{i,d_i}s^{d_i})t^{d_i}$. Thus, the projective class of the vector of initial coefficients defines a point in the quotient torus $\mathbb{C^*}\mathbb{P}^{n-1}_{(d_1, d_2, \dots, d_n)} = (C^*)^n/((x_1,x_2,\ldots, x_n)\sim(\lambda^{d_1}x_1, \ldots, \lambda^{d_n}x_{n}))$. .
\end{remm}

\begin{deff}
    A {\it polyhedral cone} in $\RR^n$ is the set of all nonnegative linear combinations of a finite collection of vectors. A {\it face} of a cone is its intersection with the boundary of the half-space which contains this cone. A cone is called {\it strongly convex} if it contains no nontrivial linear subspace. A {\it fan } is a collection of strongly convex cones such that for each cone all its faces are also contained in the fan and the intersection of any two cones is a common face of both.        
\end{deff}

For the algebraic curve, the $1$-dimensional cones corresponding to the valuation vectors at the points at infinity define a fan. This fan is a balanced $1$-fan in the following sense.

\begin{deff}
    A $\Lambda$-weighted $k$-fan is a fan $\mathcal{F} \subset \RR^n$ equipped with a weight function $c: \mathcal{F}_k \to \Lambda$ defined on the set $\mathcal{F}_k$ of $k$-dimensional cones. 
    The support $|\mathcal{F}|$ of $\mathcal{F}$ is the union of all the cones $\sigma_i \subset \RR^n$, where $\sigma_i \in \mathcal{F}_k$ and $c(\sigma_i) \neq 0$. 
    Two $\Lambda$-weighted $k$-fans $\mathcal{F}_1$ and $\mathcal{F}_2$ are equivalent if 1) $|\mathcal{F}_1| = |\mathcal{F}_2|$ 2) weight functions $c_1$ and $c_2$ induce the same function on the common subdivision of $\mathcal{F}_1$ and $\mathcal{F}_2$.
\end{deff}

\begin{deff}
    Let $\mathcal{F}$ be a $\Lambda$-weighted $k$-fan. For the cone $\sigma_i \in \mathcal{F}_k$ denote by $L^\perp_i \subset (\RR^n)^*$ the space dual to the whole space $L$ which contains $|\sigma_i| \subset\RR^n$. Let $O$ be the orientation of $|\sigma_i|$. By $e_i^\perp(O) \in \Lambda^{n-k}L_i^\perp$ we denote the vector of the dimension $n-k$ such that 1) integral volume of $|e_i^\perp(O)|$ in $L_i^\perp$ is equal to $1$; 2) the orientation of $|e_i^\perp(O)|$ is induced by the orientation $O$ of the cone $\sigma_i$ and the standard orientation of $\RR^n$. 

    A weighted $k$-fan $\mathcal{F}$ is balanced if for any orientation of the cone $|\rho|$ of the dimension $k-1$, where $\rho \in \mathcal{F}_{k-1}$, the next equation is satisfied 

    \begin{equation*}
        \sum\limits_i e_i^\perp(O(\rho)) \cdot c(\sigma_i) = 0 \text{.}
    \end{equation*}
\end{deff}

For the $1$-fan, corresponding to the algebraic curve, we choose the weight function equal to the number of the points at infinity, corresponding to the valuation vector.

\begin{theorr}\label{theorem_restriction_on_faces}

\begin{itemize}
    \item Let $C \subset (\CC^*)^n$ be a curve defined by a system of polynomial equations $F_1 = F_2 = \dots = F_k = 0$. For each $i \in \{1, \dots, k\}$, denote by $\Delta_i$ the Newton polytope of the $i$-th equation. Then for every point $z$ at infinity of the curve $C$ and every branch at infinity at this point, there exist faces $\Gamma_1 \subset \Delta_1, \dots, \Gamma_k \subset \Delta_k$ such that the valuation vector of this branch at $z$ exposes the face $\Gamma_i$ of the polytope $\Delta_i$ for all $i \in \{1, \dots, k\}$, and the vector of initial coefficients satisfies the corresponding initial-form equations $F_1|_{\Gamma_1} = \dots = F_k|_{\Gamma_k} = 0$.

    \item For $n - 1$ equations $F_1|_{\Gamma_1} = \dots = F_{n-1}|_{\Gamma_{n-1}} = 0$  on $(\CC^*)^n$, if the initial equations meet properly
    and the common zero of the initial forms has Jacobian rank $n - 1$ transverse to the $(\CC^*)$ weight orbit of $\CC^*\PP^{n-1}_{(d_1, \ldots, d_n)}$, then the implicit-function argument produces a branch with exact primitive valuation vector exposing $\Gamma_i$.
\end{itemize}
    
\end{theorr}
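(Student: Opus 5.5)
The first part I would prove by substituting the branch directly into the equations. Fix a point $z$ at infinity and a branch $(f_1(t),\dots,f_n(t))$ with valuation vector $v=(d_1,\dots,d_n)$ and initial coefficients $c=(\mathrm{in}_{f_1},\dots,\mathrm{in}_{f_n})$. For each equation $F_i=\sum_{\alpha}a_{i,\alpha}x^\alpha$, substituting gives $F_i(f(t))=\sum_\alpha a_{i,\alpha}\prod_j f_j(t)^{\alpha_j}$. The monomial indexed by $\alpha$ has leading term $a_{i,\alpha}c^\alpha t^{\langle v,\alpha\rangle}$, because each $f_j$ is a nonzero Laurent series and $\mathrm{val}$ is additive on products. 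The largest exponent that can occur is therefore $\Delta_i(v)$, and it is attained exactly for the $\alpha$ in $\Gamma_i:=\Delta_i^v$. Hence the coefficient of $t^{\Delta_i(v)}$ in $F_i(f(t))$ is $F_i|_{\Gamma_i}(c)$. Since $F_i(f(t))\equiv 0$, this coefficient vanishes. By construction each $\Gamma_i$ is the face exposed by the common vector $v$, so the faces are compatible, and $c\in(\CC^*)^n$ satisfies all the initial-form equations. The only thing to check is that $c$ has no zero coordinates. This holds because the branch lies in $(\CC^*)^n$, so no $f_j$ is identically zero. This part is essentially a bookkeeping argument.

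For the second part I would use a monomial change of coordinates together with the implicit function theorem. Take $v$ primitive and extend it to a basis of $\ZZ^n$ by a unimodular matrix. In the new coordinates $(s,y_2,\dots,y_n)$ the one-parameter subgroup $\lambda\mapsto\lambda^{v}$ becomes $s\mapsto \lambda s$ with the $y$'s fixed. Substitute $x=t^{v}\cdot c\cdot(1+w)$, that is, $x_j=t^{d_j}c_j(1+w_j)$ with $w$ a correction. Dividing $F_i$ by $t^{\Delta_i(v)}$ gives
\[
G_i(w,\tau)=F_i|_{\Gamma_i}(c(1+w))+\tau\,H_i(w,\tau), \qquad \tau=t^{-1/m},
\]
where $H_i$ is analytic near $(0,0)$. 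Here $m=1$ in the primitive lattice setting, since the lower terms have exponents $\langle v,\alpha\rangle<\Delta_i(v)$, which are integers. The initial forms are quasi-homogeneous with respect to the $v$-weights. Because of this, the solution set of $G(w,0)=0$ contains the orbit direction, so we normalize by fixing one coordinate of $w$ transverse to the weight orbit, say $w_{j_0}=0$ with $d_{j_0}\neq0$.

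The hypothesis that the Jacobian of $(F_1|_{\Gamma_1},\dots,F_{n-1}|_{\Gamma_{n-1}})$ at $c$ has rank $n-1$ transversally to the weight orbit says exactly that $\partial G/\partial w$ restricted to the $n-1$ remaining coordinates is invertible at $(0,0)$. The implicit function theorem then gives an analytic solution $w(\tau)$ with $w(0)=0$. Setting $f_j(t)=c_jt^{d_j}(1+w_j(t^{-1}))$ produces a convergent Laurent-series branch of $C$. Its valuation vector is exactly $v$ and its initial coefficient vector is $c$. Finally I would check that $t\to\infty$ really corresponds to a point of $\overline C\setminus C$: the parametrized germ is nonconstant and escapes every compact set of $(\CC^*)^n$, because $v\ne0$.

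The main obstacle I expect is making the reduction to an honest $(n-1)\times(n-1)$ nonsingular system rigorous. The quasi-homogeneity of the initial forms makes the full Jacobian degenerate along the orbit, so the slice choice has to be justified; I would handle this by passing to the quotient torus $\CC^*\PP^{n-1}_{(d_1,\dots,d_n)}$. A second point needing care is the "meet properly" condition, which guarantees that the solution is isolated modulo the orbit, so the branch is well defined up to reparametrization $t\mapsto st$. This matches the remark on initial coefficients preceding the theorem.
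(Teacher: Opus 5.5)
The paper gives no proof of this statement. It is recalled in Section~3 as background, with a pointer to \cite{BS,KaKhE}, so there is no internal argument to compare yours against. Your argument is the standard one and it is correct. The first part is the usual leading-term comparison and is complete as written. The second part is the standard lifting argument: substitute $x_j=c_jt^{d_j}(1+w_j)$, clear $t^{\Delta_i(v)}$, and apply the implicit function theorem on a slice transverse to the orbit. A few points should be tightened.

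First, what you call the main obstacle is a two-line check. Each $F_i|_{\Gamma_i}$ is quasi-homogeneous of $v$-degree $\Delta_i(v)$. Euler's identity $\sum_j d_jx_j\partial_jF_i|_{\Gamma_i}=\Delta_i(v)F_i|_{\Gamma_i}$ therefore shows that $(d_jc_j)_j$ lies in the kernel of the rank-$(n-1)$ Jacobian at $c$, and hence spans it. Its $j_0$-th entry is nonzero, so deleting column $j_0$ leaves an invertible matrix. On the slice $w_{j_0}=0$, $\partial G/\partial w$ is that matrix times $\mathrm{diag}(c_j)_{j\neq j_0}$. You need neither the quotient torus nor ``meet properly'', since the rank condition already makes $c$ isolated modulo the orbit. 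The unimodular coordinates $(s,y_2,\dots,y_n)$ are never used.

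Second, you misplace the role of primitivity. The equality $m=1$ holds simply because $v\in\ZZ^n$. Primitivity is needed at the end, to know that $\tau=t^{-1}$ is a local parameter at the limit point. The germ extends to a holomorphic map from a disk into the smooth curve $\overline C$, sending $0$ to some $z\in\overline C\setminus C$. If $u=\tau^e\cdot(\mathrm{unit})$ for a local parameter $u$ at $z$, then the branch at $z$ has valuation vector $v/e$. Integrality of this vector together with primitivity of $v$ forces $e=1$.

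This point is not vacuous. For non-primitive $v$, uniqueness in the implicit function theorem gives $w(\zeta\tau)=w(\tau)$ whenever $\zeta^{\gcd(d_1,\dots,d_n)}=1$, so the germ is then a multiple cover of a branch with valuation vector $v/\gcd(d_1,\dots,d_n)$.

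With these additions your proposal is a complete proof of the statement in the form the paper uses it, namely with $\Gamma_i=\Delta_i^v$ and $C$ the zero locus of $F_1,\dots,F_{n-1}$.
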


\begin{deff}
    Let 
    \[
    g_t(x) = \sum_{(i_1, i_2, \dots, i_n) \in \ZZ_{\ge 0}^n} a_{i_1, i_2, \dots, i_n}(t) x_1^{i_1} \dots x_n^{i_n}
    \]
    be a one-parameter family of polynomials whose coefficients $a_{i_1, i_2, \dots, i_n}(t)$ are Laurent series. {\it The tropicalization} of $g_t(x)$ is defined as the tropical polynomial
    \[
    \mathrm{Trop}(g_t) = \bigoplus_{(i_1, i_2, \dots, i_n)\in\ZZ_{\ge0}^n} \mathrm{val}\! \left(a_{i_1, i_2, \dots, i_n}(t)\right) \odot x_1^{i_1} \odot \dots \odot x_n^{i_n}.
    \]
\end{deff}

Tropicalization is a useful tool, as it allows one to reduce certain algebraic questions (for instance, in enumerative geometry) to problems concerning polynomials over the tropical semifield.

\end{subsection}
\end{section}

\begin{section}{Computations}\label{section_computation}

\begin{subsection}{Topological equivalence classes}
    In this section, we consider the curve $C \subset\CC^A$ consisting of polynomials $f\in C^{A}$ for $A = \{d, d+1,d+2,d+3,d+4\}$, where $d \ge 3$ is an integer
and each polynomial has two nonzero critical values, each having two critical points of local degree $2$. 

By Theorem \ref{theorem_braid_group_action}, the topological equivalence classes correspond to the braid-group orbits of the corresponding cacti. A direct computation of such orbits for polynomials with the passport $[[d,1,1,1,1],[2,2,1,\ldots,1],[2,2,1,\ldots,1]]$ yields the following proposition.

\begin{prop}\label{proposition_number_of_topological_classes}
    Let $d\ge3$. Then the set of polynomials of degree $d+4$ with passport $[[d,1,1,1,1],[2,2,1,\ldots,1],[2,2,1,\ldots,1]]$ consists of two topological equivalence classes for even $d$ and of a single class for odd $d$.  
\end{prop}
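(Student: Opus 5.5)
The plan is to apply Theorem \ref{theorem_braid_group_action} directly: enumerate all cacti with the passport $[[d,1,1,1,1],[2,2,1,\ldots,1],[2,2,1,\ldots,1]]$ and compute the orbits of $B_3$ on them. Throughout, $n=d+4$ and we write a cactus as $[g_0,g_1,g_2]$, where $g_0$ is a $d$-cycle and $g_1,g_2$ are each a product of two disjoint transpositions, with $g_0g_1g_2=(12\ldots n)$. The key steps are as follows.

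First, I will count the cacti. By the Goulden--Jackson formula, the number of rooted cacti with passport $[\lambda^{(1)},\ldots,\lambda^{(k)}]$ is
\[
n^{k-1}\prod_{i=1}^{k}\frac{(p_i-1)!}{\prod_j m_j(\lambda^{(i)})!},
\]
where $p_i$ is the number of parts of $\lambda^{(i)}$ and $m_j(\lambda^{(i)})$ is the number of parts equal to $j$. This gives a closed expression in $d$, which I will use as a consistency check on the enumeration below.

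Second, I will parametrize the cacti combinatorially. Draw the star cactus: the $d$-cycle $g_0$ is a $d$-gon, and the two $2$-cycles of each of $g_1$ and $g_2$ are edges joining points of the cycle structure. The condition that the product is a full cycle forces a planar (tree-like) gluing. I expect each cactus to be determined by a small number of discrete parameters. These are essentially the positions on the $d$-gon, taken modulo $d$, at which the four "extra" sheets are attached, together with a choice among finitely many local attachment patterns, for instance whether the two transpositions of $g_1$ are nested or side by side relative to those of $g_2$. I will list all admissible patterns and check that their total matches the count from the first step.

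Third, I will compute the action of the generators $\sigma_1,\sigma_2$ of $B_3$ in these coordinates. Conjugating $g_1$ by $g_2$, or $g_0$ by $g_1$, moves the attachment positions around the $d$-gon by explicit small shifts. The heart of the proof is to find an invariant: I will look at the difference $\delta$ (mod $d$) of the positions on the $d$-gon of the two branches belonging to one critical value. I expect every braid move to change $\delta$ by an even amount, or by $\pm 2$, so $\delta \bmod 2$ is preserved when $d$ is even and gives no obstruction when $d$ is odd. Analytically, this invariant should reflect a symmetry of $x^dQ(x)$ with $Q$ quartic that exists only for even $d$.

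Finally, I will show that the moves connect everything else. For odd $d$, I will show all cacti form one orbit. For even $d$, I will show each parity class is a single orbit, and verify that both parities actually occur. This last check rules out a degenerate situation in which one class is empty.

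The main obstacle is the third step together with its transitivity part. Writing the braid action correctly in the chosen coordinates needs careful bookkeeping of the cycle labels, and proving transitivity within each parity class requires exhibiting explicit move sequences, such as suitable powers of $\sigma_1\sigma_2$, that realize every admissible shift. If the parameters turn out to be messier than expected, I would first verify the orbit count by computer for small $d$ ($d=3,\ldots,8$) to pin down the exact invariant before proving it in general.
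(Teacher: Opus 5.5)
\paragraph{The proposed invariant is false.} Your overall strategy matches the paper's: parametrize cacti by how the four transpositions attach to the $d$-cycle, then move between configurations with $\sigma_1,\sigma_2$. The paper lists fifteen attachment structures and reduces each one to a canonical cactus. The gap is the invariant you propose for even $d$. Your $\delta \bmod 2$ is not preserved by the braid action, and the two classes are not parity classes.

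In several structures, a double swap $\sigma_i^2$ of the $d$-cycle with a $(2,2)$-element slides one attachment point a single step along the $d$-cycle while the others stay put. The paper uses exactly this move to bring attachment points together, so relative positions change by $1$.

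Your own consistency check from the first step also refutes the parity picture. For each fixed order of the cycle types there are $(d+4)^2(d+1)^2/4$ cacti. The exceptional class for even $d$ consists of the cacti of polynomials that become even, $f(x)=g(x^2)$, once the $d$-fold zero is placed at $0$. Here $g$ has degree $m=(d+4)/2$ and passport $[[d/2,1,1],[2,1,\ldots,1],[2,1,\ldots,1]]$. These cacti are in bijection with the cacti of $g$, and the same formula gives only $m^2=(d+4)^2/4$ of them; for $d=4$ that is $16$ out of $400$. So one class is tiny and the other contains everything else. No parity of a distance separates the set this way, and the claim that each parity class is a single orbit is false.

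\paragraph{What to use instead.} The correct invariant is central symmetry. Write $c=(12\ldots n)$ for the fixed product, and call a cactus symmetric if every entry commutes with $c^{n/2}$; this requires $n=d+4$ to be even. Each $\sigma_i^{\pm1}$ replaces two neighbouring entries by one of them and a conjugate of the other by it, and keeps the product equal to $c$. Hence the symmetric cacti form a braid-invariant set. They are exactly the monodromies of polynomials admitting the deck involution $x\mapsto -x$, which is what your heuristic about $x^dQ(x)$ was pointing at. For even $d$ this set is nonempty, since such $g(x^2)$ exist, and it is proper by the count above.

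What remains is transitivity on each of the two sets, and this is the real content of the paper's proof. From every structure one reaches the canonical single-connection cactus. The only place the reduction gets stuck is the configuration with four simple connections in which both pairs $(a,c)$ and $(e,g)$ split the $d$-cycle into two equal halves. These are precisely the symmetric cacti, which the paper then shows are equivalent to one another.

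Your fallback of computing small cases by machine would probably expose this. As written, however, the central step of the proposal rests on an invariant that does not exist.
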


\begin{proof}
Each cactus is equipped with an ordering of the permutations $[g_1,g_2,g_3]$. It is also characterized by the common nonstable elements of the permutations $g_1,g_2,g_3$. The complete list of the structures of the triple $\{g_1,g_2,g_3\}$ of permutations is as follows:

\bigskip
\quad One single connection with the cycle:
\begin{enumerate}
   
\item\label{1_cycle_con_1}  $\{(bc)(de),(ab)(cd),(\ldots e \ldots)\}$ \\ \\ One double connection with the cycle:
\item\label{1_cycle_double_con_1_3}  $\{(bc)(de),(ab)(cd),(\ldots d \ldots)\}$
\item\label{1_cycle_double_con_2_2}  $\{(ab)(cd),(bc)(de),(\ldots c \ldots)\}$ \\ \\ Two single connections with the cycle:
\item\label{2_cycle_singe_con_1_3} $\{(bc)(ef),(ab)(cd),(\ldots d \ldots f)\}$
\item\label{2_cycle_singe_con_2_2_diff}  $\{(ab)(ef),(bc)(de),(\ldots c \ldots f)\}$
\item\label{2_cycle_singe_con_2_2_sim}  $\{(ab)(de),(bc)(ef),(\ldots c \ldots f)\}$\\ \\ Two connections with the cycle: one single and one double 
\item\label{2_cycle_singe_double_con_1_3}  $\{(bc)(ef),(ab)(cd),(\ldots c \ldots f)\}$
\item\label{2_cycle_singe_double_con_2_2}  $\{(bc)(df),(ab)(ef),(\ldots c \ldots f)\}$\\ \\ Two double connections with the cycle:
\item\label{2_cycle_double_con_2_2}  $\{(ac)(df),(bc)(ef),(\ldots c \ldots f)\}$\\ \\ Three simple connections with the cycle:
\item\label{3_cycle_simple_case1}  $\{(bc)(de),(ab)(fg),(\ldots c \ldots e \ldots
g)\}$
\item\label{3_cycle_simple_case2}  $\{(bc)(de),(ab)(fg),(\ldots c \ldots g \ldots
e)\}$\\ \\ Three connections with the cycle: two single one double:
\item\label{3_cycle_simple_double_case1}  $\{(ac)(de),(bc)(fg),(\ldots c \ldots e \ldots
g)\}$
\item\label{3_cycle_simple_double_case2}  $\{(ac)(de),(bc)(fg),(\ldots c \ldots g \ldots
e)\}$\\ \\ Four simple connections with the cycle:
\item\label{4_cycle_simpe_case1}  $\{(ab)(cd),(ef)(gh),(\ldots a \ldots c\ldots e \ldots g)\}$
\item\label{4_cycle_simpe_case2}  $\{(ab)(cd),(ef)(gh),(\ldots a \ldots e\ldots c \ldots g)\}$
\end{enumerate}

For each cactus, we describe below how to obtain the structure \ref{1_cycle_con_1} and the order $[g_3,g_2,g_1]$ for odd $d$ by applying the action of the braid group. For even $d$, the same argument applies to one of the classes. For the second class we show how to obtain the structure \ref{2_cycle_singe_con_2_2_sim} and the order $[g_3,g_1,g_2]$. The proof proceeds as follows: we start with the class \ref{1_cycle_con_1} and the order $[g_3,g_1,g_2]$ and then for each other structure and order we show how to obtain either the initial case or another case that has already been considered.

\begin{itemize} 
\item Here we prove that all cacti with the structure \ref{1_cycle_con_1} are equivalent to some cactus with the same structure and the order $[g_3, g_2, g_1]$. We move $g_3$ to the left preserving the structure and then, if necessary, move $g_2$ to the left preserving the structure to obtain the order $[g_3,g_2,g_1]$. After relabelling all these cacti are equivalent to $$[(1 2 \ldots (d-1) (d+4)),((d+2) (d+1))((d+3) d),((d+1) (d+3))(d (d+4))]\text{.}$$

\item For the cacti with the structure \ref{1_cycle_double_con_1_3} or \ref{2_cycle_singe_con_1_3}, we move $g_3$ to the left to obtain one of the structures \ref{1_cycle_double_con_1_3} or \ref{2_cycle_singe_con_1_3} and the order $[g_3, x, y]$. 

For the structure \ref{2_cycle_singe_con_1_3} and the order $[g_3,g_2,g_1]$, we swap $g_1$ and $g_2$ to obtain the order $[g_3, g_1, g_2]$ and preserve the structure. 

For the structure \ref{2_cycle_singe_con_1_3} and the order $[g_3,g_1,g_2]$, it is possible to obtain the structure \ref{1_cycle_double_con_1_3} and the order $[g_3,g_1, g_2]$. A double swap of the first two elements moves the element $f$ in the cycle $g_3$ right by one. We repeat these moves until $f$ and $d$ coincide, thereby obtaining the structure  \ref{1_cycle_double_con_1_3}. 

For the structure \ref{1_cycle_double_con_1_3} and the orders $[g_3,g_2,g_1]$ and $[g_3,g_1,g_2]$, we swap the last two and the first two elements respectively to obtain the structure \ref{1_cycle_con_1}.

\item For the structure \ref{1_cycle_double_con_2_2}, if $g_3$ is not the middle element of the order, we swap $g_1$ and $g_2$ to obtain the structure \ref{1_cycle_double_con_1_3}. 

If $g_3$ is the middle element of the order, we swap it with the right element to obtain the structure \ref{1_cycle_double_con_1_3}.

\item For the structure \ref{2_cycle_singe_double_con_1_3}, if $g_3$ is not the middle element of the order, then we may obtain the structure \ref{1_cycle_double_con_1_3} by a single or double swap of $g_1$ and $g_2$. The swap $[g_2, g_1] \to [g_1,g_2]$ preserves the structure and the swap $[g_1, g_2] \to [g_2,g_1]$ gives the structure \ref{2_cycle_singe_con_1_3}.

If $g_3$ is the middle element of the order, we swap it with the right element. The swap $[g_3,g_1] \to [g_1,g_3]$ preserves the structure and makes $g_3$ not the middle element; the swap $[g_3,g_2] \to [g_2,g_3]$ obtains the structure  \ref{1_cycle_double_con_1_3}.

 \item Here we describe how to obtain the structure \ref{2_cycle_singe_double_con_1_3}  or \ref{1_cycle_double_con_2_2} out of \ref{2_cycle_singe_con_2_2_diff} or \ref{2_cycle_singe_double_con_2_2}.

 For the structure \ref{2_cycle_singe_con_2_2_diff}, we move $g_3$ to the left to obtain the  same structure and the order $[g_3,x,y]$ or to obtain the structure \ref{1_cycle_double_con_2_2}. For the structure \ref{2_cycle_singe_con_2_2_diff}  and the order $[g_3,x,y]$, if $x = g_2$, we swap the first two elements to obtain the structure \ref{2_cycle_singe_double_con_2_2}; if $x = g_1$, we swap the last two elements for this purpose. 

 For the structure \ref{2_cycle_singe_double_con_2_2} with the order $[g_1,g_2,g_3]$, it is possible to obtain the structure \ref{2_cycle_singe_double_con_1_3}. A double swap of the last two elements gives the structure \ref{3_cycle_simple_case1} with the order $[g_1,g_2,g_3]$. For the structure \ref{3_cycle_simple_case1} and the order $[g_1,g_2,g_3]$, the double swap of the last two elements moves the point $g$ right by one in the cycle $g_3$. We repeat these moves until $g$ and $c$ coincide, thereby obtaining the structure \ref{2_cycle_singe_double_con_1_3}.
 
 For the structure \ref{2_cycle_singe_double_con_2_2}, we move $g_1$ to the left preserving the structure. If the order is $[g_1, g_2, g_3]$, then we know how to obtain the structure \ref{2_cycle_singe_double_con_1_3}. If the order is $[g_1, g_3, g_2]$, we swap the last two elements to obtain the structure \ref{2_cycle_singe_con_2_2_diff} with the order $[g_1, g_2, g_3]$ and then swap the first two elements to obtain the structure \ref{2_cycle_singe_double_con_2_2} with the order $[g_2,g_1,g_3]$. Then move $g_1$ to the left to obtain the order $[g_1, g_2, g_3]$ and preserve the structure \ref{2_cycle_singe_double_con_2_2}.

 \item For the structures \ref{3_cycle_simple_case1}  and 
\ref{3_cycle_simple_case2}, we move the element $g_2$ to the left preserving the structure and then, if necessary, we
move $g_3$ to the left to preserve the structure and obtain the order $[g_2, g_3, g_1]$ or to obtain one of the structures \ref{2_cycle_singe_double_con_1_3} and \ref{2_cycle_singe_double_con_2_2}. A double swap of the first two elements moves the element $g$ in the cycle $g_3$ right by one. We repeat these swaps until $g$ coincides with $c$ or $e$, thereby obtaining the structure \ref{2_cycle_singe_double_con_1_3} or \ref{2_cycle_singe_double_con_2_2}.

\item For the structures \ref{3_cycle_simple_double_case1} and  \ref{3_cycle_simple_double_case2},
 if $g_3$ is not the middle element of the order, we swap $g_1$ and $g_2$ to obtain the structure \ref{3_cycle_simple_case1} or \ref{3_cycle_simple_case2}.
If $g_3$ is the middle element of the order, we swap it with the right element to obtain the structure \ref{3_cycle_simple_case1} or \ref{3_cycle_simple_case2}.

\item For the structures \ref{4_cycle_simpe_case1} and  \ref{4_cycle_simpe_case2}, if at least one of $(a,c)$ and $(e,g)$ does not split $g_3$ into two equal parts,
it is possible to move one of them to obtain the coincidence of exactly two points
and consequently the structure \ref{3_cycle_simple_double_case1} or \ref{3_cycle_simple_double_case2} (we note that when $d+4$ is odd, it is always possible,
whereas when $d+4$ is even, it is not always possible because of the symmetry. This symmetry provides the second orbit in the even case).

The swap $[g_3, g_i] \to [g_i, g_3]$ preserves the distances between $a,c,e,g$ and $[g_i, g_3] \to [g_3, g_i]$ moves $(a,c)$ or $(e,g)$ right by one in the cycle $g_3$.
If $g_3$ is the middle element of the order, then, depending on which pair $(a,c)$ or $(e,g)$ we need to move, we may swap $g_3$ with the corresponding permutation.

Since $g_1g_2 = g_2g_1$, if $g_3$ is not in the middle of the order, then, depending on which pair $(a,c)$ or $(e,g)$ we need to move, we can place the corresponding permutation $g_1$ or $g_2$ respectively in the middle.

For even $d+4$, if $(a,c)$ and $(e,g)$ split $g_3$ into two equal parts, it is possible to achieve the coincidence of two pairs of vertices in the same way as described above to obtain the structure \ref{2_cycle_double_con_2_2}.

\item For the structure \ref{2_cycle_double_con_2_2} and the orders $[x,y,g_3]$ and $[y, g_3,x]$, we swap $g_3$ with $y$ to obtain the structure \ref{4_cycle_simpe_case1} or  \ref{4_cycle_simpe_case2} or \ref{3_cycle_simple_double_case1}. For the other orders, we move $g_3$ to the right by $1$ to obtain the structure \ref{2_cycle_singe_con_2_2_sim}.  

For the structure \ref{2_cycle_singe_con_2_2_sim} we move $g_3$ to the left preserving the structure and then, if necessary, move $g_1$ to the left to obtain the order $[g_3,g_1,g_2]$. Then we swap $g_3$ and $g_1$ to preserve the structure and 
obtain the order $[g_1,g_3,g_2]$. Then we swap $g_3$ and $g_2$ to obtain
the structure \ref{2_cycle_double_con_2_2} with the order $[g_1,g_2,g_3]$.

\item For even $d+4 = 2k+4$, we should also note that for the structure \ref{2_cycle_double_con_2_2} and the order $[g_3,x,y]$, it is also possible to obtain the structure \ref{2_cycle_singe_con_2_2_sim} by swapping $x$ and $y$. The symmetric cacti with the structures \ref{4_cycle_simpe_case2}, \ref{2_cycle_double_con_2_2}, \ref{2_cycle_singe_con_2_2_sim} are equivalent to the symmetrical cactus with the structure \ref{2_cycle_singe_con_2_2_sim} and the order $[g_3,g_1,g_2]$, namely $$[( 23\ldots (k)(k+3)\ldots(2k+3)(2)), ((k+1)(k+2))((2k+4)(1)) , ((k+1) (k+3))((2k+4)(2))]$$.
\end{itemize}

\end{proof}

\end{subsection}

\begin{subsection}{Lyashko-Looijenga map}

    For each support $A$, each polynomial passport defines a stratum $A_X \subset \CC^A$ consisting of polynomials with the passport $X$. If $A_X$ consists of more than one topological equivalence class, then $A_X$ is not connected. Moreover, the following proposition is well-known (see, e.g., \cite{GJV}).

    \begin{prop}\label{proposition_connected_components}
        The topological equivalence classes of meromorphic functions $f:S^2\to S^2$ are in one-to-one correspondence with the connected components of the space of functions having the same passport.
    \end{prop}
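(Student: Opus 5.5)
We prove Proposition \ref{proposition_connected_components} by identifying both sides with the same orbit set, using the Lyashko--Looijenga map and Theorem \ref{theorem_braid_group_action}. Fix the passport $X$ with $k$ finite critical values. Consider the space $H_X$ of functions with passport $X$, taken modulo affine changes of the source variable (for polynomials; Möbius changes for meromorphic functions). These groups are connected, so passing to the quotient does not change the set of connected components. Let $\mathrm{LL}: H_X \to \mathrm{Conf}_k(\CC)$ send a function to its unordered set of critical values $\{y_1,\dots,y_k\}$.

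\textbf{Step 1: $\mathrm{LL}$ is a finite unramified covering.} Fix a configuration $Y$ and a base point $y_0$. By the first existence theorem, functions over $Y$ correspond bijectively, up to the source reparametrization, to cacti with cyclic types given by $X$. Since $S_n$ is finite, the fiber is finite. Local triviality comes from the Riemann existence theorem. If $Y$ is moved slightly by an isotopy of the target supported in small discs, the monodromy data is transported unchanged. Hence the function deforms continuously and uniquely, by continuous dependence of the complex structure pulled back along the branched cover. This gives local sections, so $\mathrm{LL}$ is a covering.

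\textbf{Step 2: monodromy of the covering is the braid action.} The fundamental group of $\mathrm{Conf}_k(\CC)$ is $B_k$. Take the standard generator that exchanges $y_i$ and $y_{i+1}$ along a half-twist. Transporting the chosen system of loops along it replaces $g_i$ by $g_{i+1}$ and $g_{i+1}$ by $g_{i+1}^{-1}g_ig_{i+1}$. This is exactly the action of $\sigma_i$. The check is a standard computation of the effect of a half-twist on a free basis of $\pi_1(\CC\setminus Y,y_0)$ (the Hurwitz action). One also has to include the conjugation ambiguity from relabelling $P^{-1}(y_0)$, which is absorbed by the affine normalization.

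\textbf{Step 3: conclusion.} The base $\mathrm{Conf}_k(\CC)$ is connected. For a finite covering over a connected base, connected components of the total space correspond to orbits of the monodromy group on a single fiber. So the components of $H_X$ correspond to $B_k$-orbits of cacti. By Theorem \ref{theorem_braid_group_action}, these orbits are exactly the topological equivalence classes.

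An alternative, more direct argument works as follows. If two functions lie in the same component, a path joining them gives an isotopy of branched coverings, hence a topological equivalence. Conversely, suppose $h_1\circ P_2=P_1\circ h_2$ with $h_1$ orientation-preserving. The homeomorphism group of $S^2$ fixing $\infty$ is connected, so $h_1$ is isotopic to the identity, hence to a path of diffeomorphisms. Pulling the complex structure back along this isotopy gives a path in $H_X$ from $P_1$ to $P_2$, up to reparametrization.

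\textbf{Main obstacle.} The main obstacle is Step 1, namely the local-triviality claim: that a small motion of the branch values lifts to a unique continuous family of functions. I would handle it with the Riemann existence theorem in families, or equivalently with Grauert's theorem on the induced family of Riemann surfaces. Everything after that is formal covering-space theory.
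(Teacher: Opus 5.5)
The paper does not prove this proposition; it quotes it as well known, citing \cite{GJV}, so there is no argument to match. Your covering-space argument is the standard one and is sound in outline for polynomials, which is the only case the paper later uses. For the statement as written, however, there is a gap of scope: the statement concerns meromorphic functions $S^2\to S^2$, and your main line does not reach them. Step~3 rests on Theorem \ref{theorem_braid_group_action}, which is a theorem about polynomials only. Steps~1--2 also use polynomial-specific structure: $\infty$ as a fixed, totally ramified value; cacti normalized by $g_1\cdots g_k=(12\ldots n)$; and the base $\mathrm{Conf}_k(\CC)$ with group $B_k$. For rational functions, $\infty$ is an ordinary point of the target, and critical values can move through it. So the base must be $\mathrm{Conf}_k(\overline{\CC})$, whose fundamental group is the braid group of the sphere. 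The fibre consists of transitive tuples of the prescribed cycle types with $g_1\cdots g_k=1$, modulo simultaneous conjugation in $S_n$. The reparametrization group is $\mathrm{PSL}_2(\CC)$. The orbit--class correspondence you then need is Hurwitz's rather than Zdravkovska's, and the paper does not supply it. Your parenthetical ``M\"obius changes'' addresses none of this.

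The natural repair is your second argument, made independent of Theorem \ref{theorem_braid_group_action}. Use that the whole of $\mathrm{Homeo}^+(S^2)$ is connected, not only the stabilizer of $\infty$, and choose an isotopy $h_1^t$ from the identity to $h_1$. Lift the path $t\mapsto h_1^t(Y_2)$ through the covering over $\mathrm{Conf}_k(\overline{\CC})$, starting at $P_2$, and show that the endpoint is $P_1\circ A$ with $A\in\mathrm{PSL}_2(\CC)$. That identification is the step you actually have to write. Transport along a path realized by an ambient isotopy carries the monodromy of $P_2$ to that of $h_1\circ P_2=P_1\circ h_2$, and uniqueness in the Riemann existence theorem then produces $A$. ``Pulling the complex structure back'' along an isotopy of homeomorphisms is not meaningful as stated.

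There are two smaller points about the polynomial case. First, your ``main obstacle'' is already in the paper: Theorem \ref{theorem_degree_of_lyashko_looijenga_map} asserts that $\operatorname{LL}$ restricted to a stratum is an unramified covering. Second, the fibre over $Y$ is in bijection with cacti modulo conjugation by powers of $(12\ldots n)$, not with cacti. This leaves the orbit sets unchanged, but not because of the affine normalization. The reason is that the full twist $\Delta^2\in B_k$ acts on $[g_1,\ldots,g_k]$ by simultaneous conjugation by $g_1\cdots g_k=(12\ldots n)$.
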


    In order to describe these connected components for $C$, we introduce the Lyashko-Looijenga map for polynomials with support $A$ and $k$ distinct critical values as
    
    \begin{align}
            & \operatorname{LL}: \CC^A \to \CC^k\\
            &f \mapsto g(t)= \prod\limits_{i = 1}^k(t-t_i)\text{,}
    \end{align}

    where $t_i$ is a critical value of $f$.

    The following theorem is proved in \cite{LZV}.

    \begin{nott}
         To a polynomial $f(x)=x^{n+1}+\ldots+a_n$ with $k$ critical values we associate the collection of partitions $X = [X_1,X_2,\ldots, X_k] = [[1^{k_{1,1}}\ldots n^{k_{1,n}}],[1^{k_{2,1}}\ldots n^{k_{2,n}}],\ldots,[1^{k_{k,1}},\ldots,n^{k_{k,n}}]]$, where $k_{i,j}$ is the number of roots of $f^\prime$ with multiplicity $j$ corresponding to the $i$-th critical value of $f$. We refer to $X$ as the critical-point passport of $f$. 

        The number $l(X_i) = \sum\limits_{j=1}^{n}k_{i,j}$ is called the {\it length } of the $i$-th critical value.

        The number $A(X_i) = \sum\limits_{j=1}^{n}j\cdot k_{i,j}$ is called the {\it degeneracy } of the $i$-th critical value.
        
        To $X = [X_1,X_2,\ldots, X_k]$ we associate the partition $T=[1^{t_1}\ldots n^{t_n}]$ of $n$, where $t_i = |\{X_j \in X: A(X_j) = i\}|$.

        $\operatorname{\#AUT}(X_i) = k_{i,1}!\ldots k_{i,n}!$ is the number of automorphisms of $X_i$ preserving the multiplicities of critical points.

        $\operatorname{\#AUT}(T) = t_1!\ldots t_n!$ is the number of automorphisms of $X$ preserving the degeneracy.

        $\operatorname{\#AUT}(X)$ is the number of automorphisms of $X$ preserving the ramification type of critical points.

    \end{nott}
        
    \begin{theorr}\label{theorem_degree_of_lyashko_looijenga_map}
        The restriction of the Lyashko–Looijenga map to the stratum $X$ of univariate complex monic polynomials of degree $n+1$ with $k$ critical values is an unbranched covering. Its degree is equal to

        $$(n+1)^{k-1}\frac{\operatorname{\#AUT}(T) }{\operatorname{\#AUT}(X)}\prod\limits_{i = 1}^k \frac{(n - A(X_i) )!}{(n+1-A(X_i) - l(X_i))!\cdot\operatorname{\#AUT}(X_i)}$$. 
    \end{theorr}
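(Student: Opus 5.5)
The plan is to prove the two parts separately: first that $\operatorname{LL}$ restricted to the stratum $X$ is a finite unbranched covering, and then to compute its degree by counting, via Riemann existence, the polynomials lying over one generic point. Throughout I normalize the stratum to monic centered polynomials (subleading coefficient zero). This kills the affine freedom $x\mapsto ax+b$ except for multiplication of $x$ by an $(n+1)$-st root of unity. The stratum is then a smooth quasi-affine variety of dimension $k$, and $\operatorname{LL}$ maps it to the open set of $g$ with $k$ distinct roots.

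\emph{Covering property.} The first step is to show that the critical values $t_1,\dots,t_k$ serve as local holomorphic coordinates on the stratum, so that $\operatorname{LL}$ is a local biholomorphism. Since the roots are distinct, this is the same as checking that the map to ordered critical values has invertible differential. I would argue this by Riemann existence: near $f$, moving the branch values while keeping the monodromy cactus fixed produces, by the existence and uniqueness part of the first theorem of Section~\ref{section_topological_classification}, a unique nearby normalized polynomial. This gives a holomorphic local inverse, by the standard Hurwitz space argument.

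The second step is properness over the image. If the critical values stay in a compact set of configurations with distinct points, the coefficients of a monic centered $f$ stay bounded, because the roots of $f'$ are controlled by the critical values. Moreover, no two critical points of different critical values can merge, since that would force the corresponding critical values to collide. A proper local homeomorphism with finite fibres is a finite unbranched covering. I expect this properness step to be the main obstacle: one must rule out escape of coefficients to infinity and also collisions of critical points within one critical value that would change the passport.

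\emph{Degree.} The degree equals the number of normalized polynomials over a fixed generic $g$. Fix the ordered critical values $t_1,\dots,t_k$ and a base point. By Theorem~\ref{theorem_braid_group_action}'s setting and Riemann existence, polynomials up to affine change with these critical values correspond bijectively to cacti $[g_1,\dots,g_k]$ of the right cycle types, with the product $g_1\cdots g_k$ fixed to $(12\ldots n+1)$. Here the cycle type of $g_i$ is the partition $\lambda^{(i)}\vdash n+1$ determined by $X_i$: it has $k_{i,j}$ cycles of length $j+1$ and $n+1-A(X_i)-l(X_i)$ fixed points, hence $n+1-A(X_i)$ cycles in total.

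The number of such tuples is given by the Goulden--Jackson cactus formula:
\[
(n+1)^{k-1}\prod_i \frac{(\ell(\lambda^{(i)})-1)!}{\operatorname{Aut}(\lambda^{(i)})},
\qquad\text{where } \frac{(\ell(\lambda^{(i)})-1)!}{\operatorname{Aut}(\lambda^{(i)})}=\frac{(n-A(X_i))!}{(n+1-A(X_i)-l(X_i))!\,\operatorname{\#AUT}(X_i)}.
\]
The formula counts cacti with labelled cycles modulo the identification coming from the fixed product. Each cactus corresponds to exactly one monic centered polynomial, since the $(n+1)$ roots-of-unity rescalings are exactly the relabelings $(12\ldots n+1)\mapsto$ rotations, which are absorbed by fixing the full cycle.

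Finally, $\operatorname{LL}$ forgets which root of $g$ carries which part $X_i$. Assignments of the parts of $X$ to the $k$ unordered values must respect degeneracy types, and indistinguishable parts must not be overcounted. This contributes the factor $\operatorname{\#AUT}(T)/\operatorname{\#AUT}(X)$. I would check this bookkeeping carefully on a small example, such as $k=2$ with equal passports, to confirm that the factor is right. Multiplying the factors gives the stated degree.
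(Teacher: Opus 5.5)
The paper does not prove this statement; it quotes it from \cite{LZV}. Your overall route (covering property, then counting one fibre by Riemann existence and the Goulden--Jackson formula) is a reasonable way to reach such a formula, and the core count is right. The identification $(\ell(\lambda^{(i)})-1)!/|\mathrm{Aut}\,\lambda^{(i)}| = (n-A(X_i))!/\bigl((n+1-A(X_i)-l(X_i))!\,\operatorname{\#AUT}(X_i)\bigr)$ is correct. The canonical labelling of the fibre over a base point near $\infty$ puts monic centered polynomials with prescribed types at prescribed \emph{ordered} critical values in bijection with tuples whose product is exactly $(12\ldots n+1)$, so there are $N=(n+1)^{k-1}\prod_i(\cdots)$ of them. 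You should drop the phrases ``up to affine change'' and ``modulo the identification''. Affine classes correspond to cacti up to conjugation, while Goulden--Jackson count tuples with fixed product, with no quotient.

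The genuine gap is the last factor. You take $\operatorname{LL}(f)=\prod_{i=1}^k(t-t_i)$, with values in polynomials having $k$ simple roots, as the paper defines it. Such a $g$ does not record which root carries which degeneracy. So nothing forces the assignment of the parts $X_i$ to the roots to ``respect degeneracy types'': every bijection is admissible, and the fibre has $N\cdot k!/\operatorname{\#AUT}(X)$ points.

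This contradicts the formula whenever two critical values have different degeneracies. Take $n+1=4$ and $X=[[1],[1^2]]$. The polynomials are $f=(x^2-u^2)^2+s_2$ with $f(0)=u^4+s_2$, so over $\{s_1,s_2\}$ there are four of them ($u^4=\pm(s_1-s_2)$, two for each sign), while the formula gives $2$.

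The stated degree is Lando--Zvonkine's, for $\operatorname{LL}(f)=\prod_{f'(c)=0}(t-f(c))$ with critical points counted with multiplicity, i.e.\ $g=\prod_i(t-t_i)^{A(X_i)}$, whose image is the stratum with root multiplicities $T$. Only with this convention can the degeneracy of each root be read off from $g$. The admissible assignments then number $\prod_m t_m!/\prod_\tau\mu_\tau!=\operatorname{\#AUT}(T)/\operatorname{\#AUT}(X)$, where $t_m$ is the number of parts of degeneracy $m$ and $\mu_\tau$ the number of parts equal to $\tau$. You must adopt this convention explicitly. Your planned check ($k=2$, equal passports) cannot detect the issue, since there $\operatorname{\#AUT}(T)=k!$.

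The properness step you flag as the main obstacle closes quickly. Under $f\mapsto\lambda^{n+1}f(x/\lambda)$, the coefficient of $x^{n+1-j}$ scales by $\lambda^j$ and the critical values scale by $\lambda^{n+1}$. The only monic centered polynomial with all critical values $0$ is $x^{n+1}$. A rescaling-and-limit argument then shows that bounded critical values force bounded coefficients.

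For collisions, let $c_1,\dots,c_r$ be all the critical points of $f_m$ tending to $c$, with multiplicities $a_1,\dots,a_r$. They share one critical value $t^{(m)}\to t$, because distinct values stay apart. So $f-t$ vanishes at $c$ to order at least $\sum_j(a_j+1)$, while $f'$ vanishes there to order exactly $\sum_j a_j$. Hence $r=1$, and the limit stays in the same stratum.
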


    Recall that we consider the curve $C \subset\CC^A$ consisting of polynomials $f\in C^{A}$ for $A = \{d, d+1,d+2,d+3,d+4\}$, where $d \ge 3$ is an integer
    and each polynomial has two nonzero critical values, each having two critical points of local degree $2$. 

    \begin{prop}\label{proposition_description_of_C1}
    Let $d\ge3$. Then the stratum of polynomials of degree $d+4$ with the passport $[[d,1,1,1,1],[2,2,1,\ldots,1],[2,2,1,\ldots,1]]$ is smooth. Moreover for even $d$, one of the two topological equivalence classes of such polynomials consists precisely of those polynomials that, after an affine normalization, satisfy $f^\prime(x) = cx^{d-1}(x^2-x_1^2)(x^2-x_2^2)$. 
    \end{prop}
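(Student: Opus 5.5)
The plan is to set up explicit coordinates on the stratum, deduce smoothness from Theorem \ref{theorem_degree_of_lyashko_looijenga_map}, and then identify the symmetric locus as a whole connected component, hence (by Proposition \ref{proposition_connected_components}) a whole topological class.

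\emph{Coordinates.} Every $f\in\CC^A$ has the form $f=x^d(a_0+a_1x+\dots+a_4x^4)$. Hence
$$f'(x)=c\,x^{d-1}(x-p_1)(x-p_2)(x-p_3)(x-p_4),\qquad c=(d+4)a_4\neq 0 .$$
Since $f$ is determined by $f'$ and $f(0)=0$, the map $f\mapsto(c,p_1,\dots,p_4)$ identifies $\CC^A$ with $a_4\ne0$ with $\CC^*\times\CC^4/S_4$. On the open set where the $p_i$ are pairwise distinct and nonzero, these coordinates are local analytic coordinates. The passport $[[d,1,1,1,1],[2,2,1,\ldots,1],[2,2,1,\ldots,1]]$ means that, after relabelling,
$$f(p_1)=f(p_2)=t_1,\qquad f(p_3)=f(p_4)=t_2,\qquad t_1\ne t_2,\quad t_1t_2\ne0 .$$
Since $d\ge3$, the critical point $0$ has value $0$ and has local degree $d$, so it is not involved in these two conditions.

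\emph{Smoothness.} I would deduce smoothness from Theorem \ref{theorem_degree_of_lyashko_looijenga_map}. Rescaling $f\mapsto \lambda f$ identifies the stratum with $\CC^*\times(\text{monic stratum})$. The monic polynomials of degree $d+4$ with this critical-point passport form a stratum on which the Lyashko--Looijenga map is an unbranched covering. Its target is the smooth configuration space of critical values $\{0,t_1,t_2\}$. Composing with translation in $x$ does not affect this, and our stratum is the slice where the critical point of multiplicity $d-1$ sits at the origin. This slice is transverse, because translations act freely and move that critical point with nonzero speed.

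Hence the stratum is locally a product of smooth pieces and is smooth. As a check (and a fallback if the normalization bookkeeping in the cited theorem is awkward), I would verify directly that the differentials of $F_1=f(p_1)-f(p_2)$ and $F_2=f(p_3)-f(p_4)$ in the coordinates $(c,p_1,\dots,p_4)$ are independent. For this I would use $\partial_{p_j}f(p_i)=-\int_0^{p_i}c\,x^{d-1}\prod_{k\ne j}(x-p_k)\,dx$. Then $\partial_{p_3}F_1$ and $\partial_{p_4}F_1$ involve only the variable $p_3$ or $p_4$ inside the integrand, while $\partial_{p_1}F_2=\partial_{p_2}F_2=0$ (since $f'(p_i)=0$ kills the boundary terms). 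So the Jacobian is block triangular, and its rank-$2$ property reduces to $\partial_{p_1}F_1\ne0$ or $\partial_{p_2}F_1\ne 0$. That is a nonvanishing of $\int_{p_2}^{p_1}x^{d-1}\prod_{k\neq 1}(x-p_k)\,dx$-type integrals, which I would check can be arranged.

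\emph{The symmetric class (even $d$).} Let $S$ be the set of $f$ with $f'=c\,x^{d-1}(x^2-x_1^2)(x^2-x_2^2)$ and with $x_1^2\neq x_2^2$, both nonzero, and the two values $f(x_1),f(x_2)$ distinct and nonzero. Because $d$ is even, $f'$ is odd, so $f$ is even and $f(\pm x_i)$ coincide; hence $S$ lies in the stratum. The set $S$ is the image of the connected parameter space $(c,x_1^2,x_2^2)$ minus a proper algebraic subset, so it is connected. Its dimension is $3$, equal to that of the stratum.

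$S$ is closed in the stratum, being the fixed locus of the involution $f(x)\mapsto f(-x)$ intersected with it. It is also open: it is an irreducible analytic subset of full dimension in a smooth connected-component neighbourhood. So $S$ is a union of connected components, and being connected it is one component. By Proposition \ref{proposition_connected_components} it is one topological class.

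It remains to show $S$ is not the whole stratum, which gives exactly the second class of Proposition \ref{proposition_number_of_topological_classes}. For this I would compute the cactus of a symmetric polynomial. The rotation $x\mapsto -x$ conjugates the monodromy to itself by a half-turn of the $(d+4)$-cycle at infinity. So the two transpositions over each critical value are antipodal, which is the symmetric cactus of structure \ref{2_cycle_singe_con_2_2_sim} singled out at the end of the proof of Proposition \ref{proposition_number_of_topological_classes}. The other orbit (structure \ref{1_cycle_con_1}) is then realized by a non-symmetric polynomial.

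The main obstacle I expect is the openness/transversality bookkeeping: matching the normalization in Theorem \ref{theorem_degree_of_lyashko_looijenga_map} (monic, arbitrary translation) with the slice $\CC^A$, and ensuring the symmetric locus is genuinely open rather than merely of full dimension. For the latter I would fall back on the involution argument. The involution acts on the tangent space of the smooth stratum, and at a point of $S$ its $+1$-eigenspace already has dimension $3$, equal to the dimension of the stratum. Hence the involution acts trivially near $S$, so $S$ is open.
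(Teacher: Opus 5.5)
Your proposal is correct. The smoothness part is the paper's argument: the Lando--Zvonkine covering on the monic stratum, then passing to the slice. The identification of the symmetric class, however, takes a genuinely different route.

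\textbf{How the paper does it.} The paper restricts $\operatorname{LL}_B$ to $B_1=\{f=g(x^2)\}$. It applies Theorem \ref{theorem_degree_of_lyashko_looijenga_map} a second time, to the stratum of $g$ (degree $\frac{d+4}{2}$, critical-point passport $[[1^1],[1^1],[\frac d2-1]]$), and deduces that $\operatorname{LL}_{B_1}$ is itself a covering of the same base. Hence $B_1$ is monodromy-invariant, i.e.\ a union of components. It then concludes using that $B_1$ is a \emph{proper} union and that there are exactly two components.

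\textbf{How you do it.} You show that $S$ is:
\begin{itemize}
\item closed, as the fixed locus of $f(x)\mapsto f(-x)$;
\item open, because a closed analytic subset of local dimension $3$ in the smooth $3$-dimensional stratum fills every component it meets (or via $d\iota_p=\mathrm{id}$ and Cartan linearization);
\item connected, as the image of a Zariski-open subset of $(c,x_1^2,x_2^2)$-space.
\end{itemize}

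\textbf{What each buys.} Your route avoids a second appeal to Lando--Zvonkine. Because it proves $S$ connected, it yields ``exactly one component'' directly. The paper asserts that $B_1$ is proper without justification. In your version this is automatic: a single connected component cannot be the whole stratum once Propositions \ref{proposition_number_of_topological_classes} and \ref{proposition_connected_components} give two components. Your closing cactus computation is therefore superfluous, which is just as well, since it is only sketched. The paper's route stays inside the LL-covering framework, which would also give sheet counts for the symmetric component.

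\textbf{An error in your fallback Jacobian check.} The condition $f'(p_i)=0$ only removes the boundary terms. What remains is
\[
\partial_{p_1}F_2=-c\int_{p_4}^{p_3}x^{d-1}(x-p_2)(x-p_3)(x-p_4)\,dx ,
\]
which is not zero. For example, at a symmetric point ($p_2=-p_1$, $p_4=-p_3$, $d$ even) it equals $\frac{4c\,p_3^{d+3}}{(d+1)(d+3)}$. So the Jacobian is not block triangular. Moreover, rank $2$ would have to be verified at every point of the stratum, not at one point you ``arrange''. Since your main argument does not depend on this check, simply drop it.
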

    
    \begin{proof}
         By Theorem \ref{theorem_degree_of_lyashko_looijenga_map}, the stratum of monic polynomials of degree $d+4$ with the critical-point passport $[[d-1],[1^2],[1^2]]$  (or, equivalently, with the passport \\$[[d,1,1,1,1],[2,2,1,\ldots,1],[2,2,1,\ldots,1]]$) is smooth. Hence, the original one is smooth. 
        
        After an affine change in the source and a rescaling in the image each polynomial with the passport $[[d,1,1,1,1],[2,2,1,\ldots,1],[2,2,1,\ldots,1]]$ may be rewritten in such a way that $f^\prime = (d+4)x^{d-1}(x-x_1)(x-x_2)(x-x_3)(x-x_4)$, where $f(x_1) = f(x_2) = t_1$, $f(x_3) = f(x_4) = t_2$, $f(x_1) \neq f(x_3)$, and $f(x_i) \neq 0$. Denote this set of the polynomials by $B$. Note that $B$ has the same number of topological equivalence classes and connected components as the original set; hence for $B$ both of these numbers equal $2$ if $d$ is even. Consider the Lyashko-Looijenga map $\operatorname{LL}_B$ given by $f \mapsto (t-t_1)(t-t_2)$. Then consider the set $B_1$ of polynomials such that $x_1 = -x_2$, $x_3 = -x_4$. These polynomials have the form $x^{d+4} - \frac{(d+4)(x_1^2+x_3^2)}{(d+2)}x^{d+2} + \frac{(d+4)x_1^2x_3^2}{d}x^d = g(x^2)$. $x_1^2$ and $x_3^2$ are the simple critical points of $g$. By Theorem \ref{theorem_degree_of_lyashko_looijenga_map}, the Lyashko-Looijenga map for monic polynomials of degree $\frac{(d+4)}{2}$ with the critical-point passport $[[1^1],[1^1],[\frac{d}{2}-1]]$ is a covering. Therefore, the restriction $\operatorname{LL_{B_1}}$ of $\operatorname{LL}_{B}$ to the locus $f(x) = g(x^2)$ is a covering. Thus, the set of $f$ satisfying $f(x) = g(x^2)$ is invariant under the monodromy group of the map $\operatorname{LL}_{B}$. Thus, this locus is a union of connected components of $B$. Since $B_1$ is a proper nonempty union of connected components of $B$, and $B$ has exactly two connected components, $B_1$ coincides with one of them. This proves the second part of the proposition.
        
    \end{proof}

Propositions \ref{proposition_number_of_topological_classes} and \ref{proposition_description_of_C1} may be applied to $C$ as follows.

\begin{prop}\label{proposition_topological_classes}
    Let $d \ge 3$ and let $A = \{d, d+1,d+2,d+3,d+4\}$. For $f \in \CC^A$ with two nonzero critical values, each having two critical points of local degree $2$, the topological equivalence classes are as follows:

    \begin{itemize}
        \item One class for odd $d$. Denote this class by $C_0$.
        \item Two classes for even $d$. One of the two classes consists of polynomials $f$ such that $f^\prime(x) = cx^{d-1}(x^2-x_1^2)(x^2-x_2^2)$. Denote this class by $C_1$ and the other one by $C_2$.
    \end{itemize}

    Moreover $C$ is smooth.
\end{prop}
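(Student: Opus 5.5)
The plan is to transfer Propositions \ref{proposition_number_of_topological_classes} and \ref{proposition_description_of_C1} from the space of all polynomials of degree $d+4$ to the stratum $C\subset\CC^A$.

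\textbf{Step 1: identify $C$ with the passport stratum.} Let $f\in\CC^A$ have leading coefficient nonzero. Then $f = x^d h(x)$ with $\deg h = 4$, and generically $h(0)\neq 0$. In that case $0$ is a critical value with a critical point of local degree $d$ at the origin. The derivative $f' = x^{d-1}\bigl(d\,h + x h'\bigr)$ has exactly four further critical points, and by assumption they form two pairs of double points over two distinct nonzero critical values. Hence the passport is $[[d,1,1,1,1],[2,2,1,\ldots,1],[2,2,1,\ldots,1]]$.

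Conversely, take a polynomial with this passport and translate $x$ so that the point of local degree $d$ sits at the origin. Translate the target so that its value is $0$. The polynomial then lies in $\CC^A$. So $C$ is exactly the set of polynomials with this passport whose order-$d$ point is at $0$ with critical value $0$. It is the slice of the full stratum $S$ cut out by fixing the translations in the source and target.

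Topological equivalence is unaffected by affine changes in the source and target. The class of $f\in C$ in $C$ is therefore its class in $S$. By Proposition \ref{proposition_number_of_topological_classes}, $C$ meets one class for odd $d$ and two for even $d$. Every class of $S$ meets $C$, since each polynomial in $S$ can be normalized into $C$ as above.

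\textbf{Step 2: describe $C_1$.} For $f\in C$ the normalization of Proposition \ref{proposition_description_of_C1} requires only a scaling $x\mapsto \lambda x$ together with a rescaling of the target. Both preserve $\CC^A$ and preserve the shape $cx^{d-1}(x^2-x_1^2)(x^2-x_2^2)$. So $C_1$ consists exactly of the $f\in C$ whose derivative has this form.

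\textbf{Step 3: smoothness.} The group of affine maps $x\mapsto ax+b$ together with $y\mapsto \alpha y+\beta$ acts freely on $S$. This uses the fact that the order-$d$ critical point is unique, since $d\ge 3$ exceeds every other local degree. The normalization map $S\to C$ is then a smooth retraction, $S\cong C\times(\text{group part})$ locally, with $C$ a global slice. Smoothness of $S$, from Proposition \ref{proposition_description_of_C1}, therefore gives smoothness of $C$.

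The main obstacle is making this slice argument rigorous. One must check that the point of local degree $d$ and its value depend holomorphically on $f\in S$, so that the translations are holomorphic in $f$. The cleanest check is to note that the critical point of multiplicity $d-1$ of $f'$ is a simple root of the $(d-2)$-th derivative $f^{(d-1)}$, which is nonvanishing there. The implicit function theorem then gives a holomorphic dependence.
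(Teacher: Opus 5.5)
Your route is the paper's own: move the order-$d$ critical point and its value to $0$ by affine changes, view $C$ as a transverse slice of the smooth passport stratum $S$ of Proposition~\ref{proposition_description_of_C1}, and transfer to $C$ the class count of Proposition~\ref{proposition_number_of_topological_classes} and the description of $C_1$. Steps~1 and~2 are correct and more detailed than the paper's three-line proof.

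One claim in Step~3 is false as stated: the full group $(x,y)\mapsto(ax+b,\alpha y+\beta)$ does \emph{not} act freely on $S$. For even $d$, every $f\in C_1$ is an even function and is fixed by $x\mapsto -x$. For odd $d$, the points of $C$ with $a_1=a_3=0$ are odd functions fixed by $(x,y)\mapsto(-x,-y)$; such points exist, cf.\ Proposition~\ref{prop_intersection_a1=a3=0}. Moreover, $C$ is not a slice for this group at all. It is invariant under the scalings $x\mapsto ax$, $y\mapsto\alpha y$ rather than transverse to them, and it has codimension only $2$ in $S$.

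What you actually need, and what your Step~1 already describes, is the translation subgroup $(x,y)\mapsto(x+b,y+\beta)$. It acts freely for exactly the reason you give: uniqueness of the point of local degree $d$ forces $b=0$, hence $\beta=0$. Your last sentence should say that $p$ is a simple root of $f^{(d-1)}$ because $f^{(d)}(p)\neq 0$; the implicit function theorem then makes $p(f)$ holomorphic. With that, the map $f\mapsto\bigl(f(x+p(f))-f(p(f)),\,p(f),\,f(p(f))\bigr)$ is a global biholomorphism $S\to C\times\CC^2$, so smoothness of $S$ gives smoothness of $C$.

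A minor point in Step~1: $a_0\neq 0$ and $a_4\neq 0$ are not just generic but forced for $f\in C$. Otherwise $f'$ has at most three roots away from the origin and cannot carry four critical points of local degree $2$ over nonzero values.
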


\begin{proof}
    Indeed, after affine changes of the source and target, every polynomial with the passport $[[d,1,1,1,1],[2,2,1,\ldots,1],[2,2,1,\ldots,1]]$ can be normalized to a polynomial in $\CC^A$. This normalization identifies $C$ with a transverse affine slice of the smooth stratum considered in Proposition \ref{proposition_description_of_C1}. Therefore, $C$ is smooth. The same normalization shows that the topological equivalence classes are those described in Propositions \ref{proposition_number_of_topological_classes} and \ref{proposition_description_of_C1}.
\end{proof}

We now compute the tropicalizations corresponding to these topological equivalence classes in order to show how to distinguish them in terms of tropical geometry. The results are summarized in Propositions \ref{prop_2dim}, \ref{prop_1dim_skew}, and \ref{prop_1dim_coll}.
\end{subsection}

\begin{subsection}{Computation of Newton polytopes}\label{section_newton_polytopes_computation}
After rescaling the source and target coordinates, every $f\in C$ can be normalized so that
\begin{equation}\label{system_definition_of_polynomials} 
    \begin{cases}   
    f^\prime(x) = x^{d-1}(x-x_1)(x-x_2)(x-x_3)(x-x_4) \text{,}  \\
     x_1x_2x_3x_4 = 1 \text{,}  \\
     f(x_1) = f(x_2)\text{,}  \\
     f(x_3) = f(x_4) \text{.}
    \end{cases}
\end{equation}
    
    Then
    \begin{multline*}
        f(x) = \frac{x^{d+4}}{d+4} - \frac{x^{d+3}}{d+3}\left(x_1+x_2+x_3+x_4\right) + \frac{x^{d+2}}{d+2}\left( x_1x_2+x_1x_3+x_1x_4+x_2x_3+x_2x_4+x_3x_4\right) - \\ - \frac{x^{d+1}}{d+1}\left( x_1x_2x_3+x_2x_3x_4+x_3x_4x_1+x_4x_1x_2\right) + \frac{x^d}{d}\left(x_1x_2x_3x_4\right) \text{.}
    \end{multline*}

    Define $C^*$ as the set of polynomials in $f\in\CC^A$ such that \eqref{system_definition_of_polynomials} holds for $x_1\neq x_2$ and $x_3 \neq x_4$. Since $C$ is smooth, $C^*$ is smooth at all the points, corresponding to the polynomials from $C$. 

    \begin{equation}\label{equation_fx1equalfx2}
    \begin{split}
        f(x_1) - f(x_2) = \left(x_1-x_2\right)^3 \left( - \frac{\sum\limits_{i = 0}^{d+1} x_1^ix_2^{d-i+1}(i+1)(d-i+2)}{(d+4)(d+3)(d+2)} + \right. \\ \left. + (x_3+x_4) \cdot \frac{\sum\limits_{i = 0}^{d} x_1^ix_2^{d-i}(i+1)(d-i+1)}{(d+3)(d+2)(d+1)} - x_3x_4 \cdot \frac{\sum\limits_{i = 0}^{d-1} x_1^ix_2^{d-i-1}(i+1)(d-i)}{(d+2)(d+1)(d)} \right)  \text{,}
    \end{split}
    \end{equation}
    and 

    \begin{equation}\label{equation_fx3equalfx4}
    \begin{split}
        f(x_3) - f(x_4) = \left(x_3-x_4\right)^3 \left( - \frac{\sum\limits_{i = 0}^{d+1} x_3^ix_4^{d-i+1}(i+1)(d-i+2)}{(d+4)(d+3)(d+2)} + \right. \\ \left. + (x_1+x_2) \cdot \frac{\sum\limits_{i = 0}^{d} x_3^ix_4^{d-i}(i+1)(d-i+1)}{(d+3)(d+2)(d+1)} - x_1x_2 \cdot \frac{\sum\limits_{i = 0}^{d-1} x_3^ix_4^{d-i-1}(i+1)(d-i)}{(d+2)(d+1)(d)} \right) \text{.}
    \end{split}
    \end{equation}
        Since $x_1 \neq x_2$ and $x_3 \neq x_4$, we divide by $(x_1 - x_2)^3$ and $(x_3 - x_4)^3$. We now describe the Newton polytope $\Delta_1$ for the equation \eqref{equation_fx1equalfx2}, divided by $(x_1-x_2)^3$. The variables $x_1, x_2, x_3, x_4$ are associated with the vectors $e_1 = (1,0,0), e_2 = (0, 1, 0), e_3 = (0, 0, 1), e_4 = (-1, -1, -1)$ in $\ZZ^3$, respectively; the last one is obtained from $x_1x_2x_3x_4 = 1$. 
    \begin{nott}
        Each permutation $\sigma$ of the set $\{1,2,3,4\}$ induces a linear map $\overline{\sigma}:\ZZ^3 \to \ZZ^3$. The vectors $e_1, e_2, e_3, e_4$ satisfy $e_1 + e_2 + e_3 + e_4 = 0$. Hence, every permutation of the four vectors induces a linear automorphism of $\ZZ^3$ via $e_i \mapsto e_{\sigma(i)}$.
    \end{nott}

    $\Delta_1$ has eight vertices:

    \begin{itemize}
        \item $v_1 = (d+1, 0, 0), v_2 = (d, 0, 1), v_3 = (d - 2, -1, 0), v_4 = (d - 1, -1, -1)$. These four vertices form a parallelogram face $\text{conv}(v_1,v_2,v_3,v_4)$.
        \item $(v_5, v_6, v_7, v_8 )= (\overline{\sigma}(v_1), \overline{\sigma}(v_2), \overline{\sigma}(v_3), \overline{\sigma}(v_4))$ for $\sigma = (12)$ form the face obtained from the previous one by applying the symmetry.
    \end{itemize}

    $\Delta_1$ has twelve edges:

    \begin{itemize}
        \item $[v_1,v_2], [v_2, v_3], [v_3, v_4], [v_4, v_1]$ form a parallelogram.
        \item $[v_5,v_6], [v_6, v_7], [v_7, v_8], [v_8, v_5]$ form the face obtained from the previous one by applying the symmetry.
        \item $[v_1,v_5], [v_2, v_6], [v_3, v_7], [v_4, v_8]$ are edges, parallel to $(-1,1,0)$.
    \end{itemize}
         
    $\Delta_1$ has six faces:

    \begin{itemize}
        \item $\text{conv}(v_1, v_2, v_3, v_4), \text{conv}(v_5, v_6, v_7, v_8)$ -- parallelograms.
        \item $\text{conv}(v_1, v_2, v_6, v_5)$, $\text{conv}(v_2, v_3, v_7, v_6)$, $\text{conv}(v_3, v_4, v_8, v_7)$, $\text{conv}(v_4, v_1, v_5, v_8)$ -- isosceles trapezoids. 
    \end{itemize}
        
    Moreover, the Newton polytope $\Delta_2$ for the equation \eqref{equation_fx3equalfx4}, divided by $(x_3-x_4)^3$, is the image $\overline{\sigma}(\Delta_1)$, where $\sigma = (13)(24)$.
\end{subsection}    

\begin{subsection}{Compatible faces}
    Both polytopes $\Delta_1$ and $\Delta_2$ are contained in the tetrahedron $S$, whose vertices are $(d+1, 0, 0), (0, d+1, 0), (0, 0, d+1), (-(d+1), - (d+1), -(d+1))$. For each vector $v$, consider its supporting face $F_v$ of $S$.

    There are four cases to consider:

    \begin{itemize}
        \item If $F_v$ is a vertex, then for one of $\Delta_1$ and $\Delta_2$, the face exposed by $v$ is also a vertex.
        \item If $F_v$ is two-dimensional, then for both $\Delta_1$ and $\Delta_2$ the face exposed by $v$ is also two-dimensional and each face is equal to $\Delta_i \cap F_v$; moreover, these faces are compatible.
        \item If $F_v$ is one-dimensional and it is not one of the edges $[(d+1, 0, 0), (0, d+1, 0)]$ and $[(0, 0, d+1), (-(d+1), - (d+1), -(d+1))]$, then for both $\Delta_1$ and $\Delta_2$ the face exposed by $v$ is also one-dimensional and each face is equal to $\Delta_i \cap F_v$; moreover, these faces are compatible.
        \item If $F_v$ is the edge $[(d+1, 0, 0), (0, d+1, 0)]$, then the face exposed by $v$ is also one-dimensional for $\Delta_1$ and equals $F_v$. The face of $\Delta_2$ exposed by $v$ is either 0-dimensional or equals $[(1, 1, d-1), (-(d-1) + 1, - (d-1) + 1, -(d-1))]$. Conversely, if $F_v$ is the edge $[(0, 0, d+1), (-(d+1), - (d+1), -(d+1))] \subset \Delta_2$, then the one-dimensional compatible face is equal to $[(d-2, -1, 0), (-1, d-2, 0)] \subset \Delta_1$.
    \end{itemize}

    The above discussion can be summarized as follows.
    \begin{prop}\label{proposition_compatible_faces_description}
        Using the notation introduced in Section \ref{section_newton_polytopes_computation}, the following nontrivial compatible pairs of faces of $\Delta_1, \Delta_2$ occur:

        \begin{enumerate}
            \item Four two-dimensional pairs. One of them is $\Gamma_1=\operatorname{conv}(v_1,v_2,v_6,v_5) \subset \Delta_1$ and $\Gamma_2=\overline{\sigma}(\operatorname{conv}(v_1,v_2,v_3,v_4))\subset \Delta_2$, where $\sigma = (13)(24)$ and the outward normal vector is $(1,1,1)$. The remaining pairs are obtained by applying $\overline{\sigma_i}$ to the pair above, where $\sigma_i = {(i4)}$ for $i = 1, 2, 3$.
            \item Four one-dimensional collinear pairs. One of them is $\Gamma_1 = [v_1,v_2]\subset \Delta_1$ and $\Gamma_2 =\overline{\sigma}([v_1,v_2]) \subset \Delta_2$, where $\sigma = (13)(24)$. The remaining pairs are obtained by applying $\overline{\sigma^\prime}$ to the pair above, where $\sigma^\prime \in \{(12), (34), (12)(34)\}$.
            \item Two one-dimensional skew pairs. One of them is $\Gamma_1 =[v_1,v_5] \subset \Delta_1$ and $\Gamma_2 =  \overline{\sigma}([v_3, v_7])\subset\Delta_2$, where $\sigma = (13)(24)$ and the outward normal vector is $(1,1,-1)$. The other pair is obtained by applying $\overline{\sigma}$, where $\sigma = (13)(24)$, to the pair above.
        \end{enumerate}
    \end{prop}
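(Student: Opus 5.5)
The plan is to read off compatible pairs from the normal fans of $\Delta_1$ and $\Delta_2=\overline{\sigma}(\Delta_1)$, $\sigma=(13)(24)$, organized by the case analysis against the tetrahedron $S$ given just before the statement. A pair of faces is \emph{nontrivial} when both faces have positive dimension, since only such pairs can carry solutions of the initial system in Theorem \ref{theorem_restriction_on_faces}. The first step is therefore to discard every $v$ whose supporting face $F_v$ of $S$ is a vertex. By the first bullet of the preceding discussion, one of the two exposed faces is then a vertex.

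Next I treat the case where $F_v$ is a facet of $S$. $S$ has four facets, with outward normals $(1,1,1)$, $(-1,0,0)$, $(0,-1,0)$, $(0,0,-1)$ up to scaling; these are permuted by the maps $\overline{\sigma_i}$, $\sigma_i=(i4)$. For $v=(1,1,1)$ the maximum of $\langle v,\cdot\rangle$ on $\Delta_1$ is $d+1$. It is attained exactly at $v_1,v_2,v_5,v_6$; note that $v_6=\overline{(12)}(v_2)=(0,d,1)$. So the exposed face of $\Delta_1$ is the trapezoid $\operatorname{conv}(v_1,v_2,v_6,v_5)$. On $\Delta_2$ I compute $\overline{\sigma}(v_i)$ using $e_3\mapsto e_1$ and $e_4\mapsto e_2$, and check that the maximal value $d+1$ is attained on $\overline{\sigma}(\operatorname{conv}(v_1,\dots,v_4))$. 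The other three facets then follow by symmetry. This step needs one check: the set $\{\Delta_1,\Delta_2\}$ is preserved up to relabelling by the relevant permutations, since $\Delta_1$ is symmetric in $x_1,x_2$ and in $x_3,x_4$, and $\sigma$ swaps the two polytopes. That gives item 1.

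For $F_v$ an edge of $S$ there are six edges. Four of them are of the form $[(d+1)e_i,(d+1)e_j]$ with $i\in\{1,2\}$ and $j\in\{3,4\}$, where the $e$'s are the vectors of Section \ref{section_newton_polytopes_computation}. For these, $\Delta_1\cap F_v$ and $\Delta_2\cap F_v$ are segments on the same line. For $F_v$ the edge between $(d+1)e_1$ and $(d+1)e_3$, I verify directly that they are $[v_1,v_2]$ and $\overline{\sigma}([v_1,v_2])$. The remaining three come from $(12)$, $(34)$ and $(12)(34)$, which gives item 2.

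The remaining two edges are $[(d+1)e_1,(d+1)e_2]$ and $[(d+1)e_3,(d+1)e_4]$, and they produce the skew pairs. Here I take $v=(1,1,-1)$, which is the precise normal named in item 3. I check that it exposes the edge $[v_1,v_5]$ of $\Delta_1$, which is parallel to $(-1,1,0)$. Next I compute $\langle v,\cdot\rangle$ on the eight vertices of $\Delta_2$ and check that its maximum is attained exactly on $\overline{\sigma}(v_3)$ and $\overline{\sigma}(v_7)$, a segment parallel to $e_3-e_4$, hence not parallel to $[v_1,v_5]$. For other $v$ in the relative interior of this edge's normal cone, the face of $\Delta_2$ is a vertex, so those pairs are trivial. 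The second skew pair is the image under $\overline{\sigma}$.

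The main obstacle is exhaustiveness, not the individual checks. I must make sure no $v$ outside these normal directions gives two positive-dimensional faces. I would do this by comparing the normal fan of $\Delta_1$, with its six facet normals and twelve edge cones, against that of $\Delta_2$ cone by cone. I expect the delicate point to be the skew case, where the cones of $(1,1,-1)$ in the two fans intersect only along a ray.
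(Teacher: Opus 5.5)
Your plan follows the paper's own argument. You split according to the face $F_v$ of $S$ exposed by $v$: a vertex, a facet, one of the four edges $[(d+1)e_i,(d+1)e_j]$ with $i\in\{1,2\}$, $j\in\{3,4\}$, or one of the two exceptional edges. The only difference is at the exceptional edge. You evaluate $\langle v,\cdot\rangle$ on the eight vertices of $\Delta_2$, whereas the paper projects by $\pi(x,y,z)=(x+y,z)$, through which every such $v$ factors.

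Your explicit checks for $(1,1,1)$, for $[(d+1)e_1,(d+1)e_3]$ and for $(1,1,-1)$ are correct. The claim you leave unverified also holds. The relative interior of the normal cone of $[(d+1)e_1,(d+1)e_2]$ is $\{(s,s,t): s>0,\ -3s<t<s\}$, and on it
\[
\langle v,\overline{\sigma}(v_3)\rangle-\langle v,\overline{\sigma}(v_7)\rangle=2(d-1)(s+t).
\]
The other six vertices of $\Delta_2$ stay strictly below the larger of these two values. So $\Delta_2$ exposes an edge only on the ray through $(1,1,-1)$.

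Two statements in your write-up are false and should be corrected.

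\textbf{Facet normals of $S$.} The outward facet normals of $S$ are $(1,1,1)$, $(-3,1,1)$, $(1,-3,1)$, $(1,1,-3)$, for the facets opposite $(d+1)e_4,(d+1)e_1,(d+1)e_2,(d+1)e_3$ respectively. They are not $(-1,0,0)$, $(0,-1,0)$, $(0,0,-1)$. For instance, $(-1,0,0)$ exposes the vertex $(d+1)e_4$ of $S$ and gives a trivial pair. This matters downstream, because these normals are the valuation vectors, such as $(-3,1,1,1)$, of the two-dimensional branches.

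\textbf{The maps $\overline{(14)}$ and $\overline{(24)}$.} They do not preserve $\{\Delta_1,\Delta_2\}$. The polytope $\overline{(14)}(\Delta_1)$ is the Newton polytope of $(f(x_4)-f(x_2))/(x_4-x_2)^3$. It contains $(d+1)e_2$ and $(d+1)e_4$, which neither $\Delta_1$ nor $\Delta_2$ does.

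What your justification actually proves is invariance under the group generated by $(12)$, $(34)$ and $(13)(24)$. That group is transitive on the four facets of $S$ and on the four mixed edges, and it swaps the two exceptional edges. This is all you need. However, for $i=1,2$ the transported pairs are not the $\overline{(i4)}$-images. For example, on the facet with normal $(1,-3,1)$ the pair is $\operatorname{conv}(v_1,v_2,v_3,v_4)\subset\Delta_1$ and $\overline{\sigma}(\operatorname{conv}(v_1,v_2,v_6,v_5))\subset\Delta_2$. This is the $\overline{\sigma}$-image with the roles of the two polytopes exchanged. The same looseness is already present in the wording of item 1.

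Finally, exhaustiveness is not the obstacle you expect. The normal fan of $S$ is complete, so every $v\neq 0$ lies in the relative interior of the normal cone of exactly one face $F_v$ of $S$. Since $\Delta_i\subset S$, the face of $\Delta_i$ exposed by $v$ is $\Delta_i\cap F_v$ whenever this intersection is nonempty.

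Apart from the vertex case, where one exposed face is automatically a vertex, an intersection is empty in only two places:
\begin{itemize}
\item $\Delta_2$ over $[(d+1)e_1,(d+1)e_2]$;
\item $\Delta_1$ over $[(d+1)e_3,(d+1)e_4]$.
\end{itemize}
These are exactly the cases your skew computation and its $\overline{\sigma}$-image handle. The four cases you already list therefore cover all $v$. The cone-by-cone comparison of the normal fans of $\Delta_1$ and $\Delta_2$ would work but is redundant, and this is how the paper closes the argument.
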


    \begin{proof}
        Indeed, above we considered all the possibilities of $F_v \subset S$, exposed by some vector $v$. If $\dim_{F_v} = 0$ or $2$, or if $F_v$ is one of the four nonexceptional edges we have either explicitly identified the corresponding compatible faces or shown that one of them is zero-dimensional. In the case $F_v = [v_1,v_5]$, one may consider the projection $\pi:\ZZ^3 \to \ZZ^2, (x,y,z) \mapsto (x+y,z)$. Under this projection $\pi(\Delta_1) = \operatorname{conv}((d+1,0),(d,1),(d-3,0),(d-2,-1))$ and $\pi(\Delta_2) = \operatorname{conv}((-2(d+1),-(d+1)),(-2(d-2),-(d-1)),(2,d-1),(0,d+1))$. Since $\pi(v)$ exposes the vertex $(d+1,0) \in \pi(\Delta_1)$, we have $\langle\pi(v), f_1\rangle < 0$ and $\langle \pi(v), f_2 \rangle < 0$, where $f_1 = (d,1) - (d+1,0) = (-1, 1)$ and $f_2 = (d-2,-1)-(d+1,0) = (-3,-1)$ are the vectors corresponding to the edges of $\pi(\Delta_1)$ adjacent to the vertex $(d+1,0) = \pi(v_1)=\pi(v_5)$. Hence, $\langle\pi(v), f_3\rangle < 0$ and $\langle \pi(v), f_4 \rangle < 0$, where $f_3 = (-2(d+1),-(d+1)) - (-2(d-2),-(d-1)) = (-6,-2)$ and $f_4 = (0,d+1) - (2,d-1)  = (-2,2)$ are the vectors corresponding to the edges of $\pi(\Delta_2)$ adjacent to the vertices $(0,d+1) =  \pi( \overline{\sigma}(v_3))$ and $(-2(d-2),-(d-1)) = \pi( \overline{\sigma}(v_7))$, where $\sigma = (13)(24)$. Two other edges of $\pi(\Delta_2)$ are parallel to $(1,1)$ and $\langle\pi(v),(1,-1)\rangle = -\langle\pi(v), f_1\rangle > 0$, where $(1,-1)$ is an outward normal vector exposing $[ \pi( \overline{\sigma}(v_3)), \pi( \overline{\sigma}(v_7))]$. Therefore, $\pi(v)$ can expose only a subset of $[ \pi( \overline{\sigma}(v_3)), \pi( \overline{\sigma}(v_7))] \subset \pi(\Delta_2)$, and consequently, $v$ can expose only a subset of $\Gamma_2 =  \overline{\sigma}([v_3, v_7])\subset\Delta_2$.
    \end{proof}
\end{subsection}
\begin{subsection}{Computation of tropicalizations}
    Recall that $C^*$ is the curve consisting of polynomials $f$ satisfying $f^\prime = x^{d-1}(x-x_1)(x-x_2)(x-x_3)(x-x_4)$, and $f(x) = \frac{x^{d+4}}{d+4} - \frac{x^{d+3}}{d+3}(x_1+x_2+x_3+x_4) + \frac{x^{d+2}}{d+2}(x_1x_2+x_1x_3+x_1x_4+x_2x_3+x_2x_4+x_3x_4) - \frac{x^{d+1}}{d+1}(x_1x_2x_3+x_2x_3x_4+x_3x_4x_1+x_4x_1x_2) + \frac{x^d}{d}(x_1x_2x_3x_4)$, with the conditions $x_1x_2x_3x_4 = 1$, $f(x_1) - f(x_2) = f(x_3) - f(x_4) = 0$ and $x_i \neq x_j$ for $i \neq j$. Let $\overline{x}:=(x_1,x_2,x_3,x_4)$, so that $f(x) = a_4(\overline{x})x^{d+4} + a_3(\overline{x})x^{d+3} + a_2(\overline{x})x^{d+2} + a_1(\overline{x})x^{d+1} + a_0(\overline{x})x^{d}$. Our goal is to determine the valuation vectors $(\operatorname{val}_{a_0}, \operatorname{val}_{a_1}, \operatorname{val}_{a_2}, \operatorname{val}_{a_3}, \operatorname{val}_{a_4})$ for every branch of $C^*$ at infinity. Since $a_0$ and $a_4$ are nonzero constants under our normalization, we have $\operatorname{val}_{a_0} = \operatorname{val}_{a_4} = 0$.

    In the computations below, we use the equivalent forms of equations \eqref{equation_fx1equalfx2} and \eqref{equation_fx3equalfx4}:

    \begin{equation}\label{equation_fx1equalfx2_alternative} 
        f(x_1) - f(x_2) = f_{1,d+4}(x_1,x_2) - (x_3+x_4)f_{1,d+3}(x_1,x_2) + x_3x_4f_{1,d+2}(x_1,x_2) \text{,}
    \end{equation}
    \begin{equation}\label{equation_fx3equalfx4_alternative} 
        f(x_3) - f(x_4) = f_{1,d+4}(x_3,x_4) - (x_1+x_2)f_{1,d+3}(x_3,x_4) + x_1x_2f_{1,d+2}(x_3,x_4) \text{,}
    \end{equation}

    where $f_{1,k}(x,y) = \frac{-(k-2)x^{k} + (k-2)y^k - kxy^{k-1} + kyx^{k-1}}{k(k-1)(k-2)}$.

    \begin{lemmm}\label{lemma_f1f2_xplusy}
        \begin{enumerate}
            \item For odd $k$, $f_{1,k}(x,-x) = \frac{-4x^k}{k(k-2)}$.
            \item For even $k$, $f_{1,k}(x,y)$ is divisible by $x+y$. Define $f_{2,k-1}(x,y):=\frac{f_{1,k}(x,y)}{x+y}$.
            \item For even $k$, $f_{2,k-1}(x,-x) = \frac{-2x^{k-1}}{k-1}$.
            \item $f_{1,k}(x,y)$ is divisible by $(x-y)^3$, but not by $(x-y)^4$ for $k \geq 3$.
            \item\label{lemma_f1f2_xplusy_item5} $f_{1,k}(x,y)$ has no multiple projective roots except $(1:1)$.
            \item\label{lemma_f1f2_xplusy_item6} $f_{1,k}(x,y)$ and $f_{1,k+1}(x,y)$ have no common projective roots except $(1:1)$.

            \item $\left(\frac{f_{1,k+1}^2(x,y)}{(k+1)k}-\frac{(x+y)f_{1,k+1}(x,y)f_{1,k}(x,y)}{k(k-1)}+\frac{xyf_{1,k}^2(x,y)}{(k-1)(k-2)}\right) = 0$ has a root $x=y$ of multiplicity $6$ and $2k-4$ simple projective roots.
        \end{enumerate}
    \end{lemmm}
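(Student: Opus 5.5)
The plan rests on one identity. Writing $f_{1,k}(x,y)=\int_y^x t^{k-3}(t-x)(t-y)\,dt$, and expanding the antiderivative
\[
\frac{t^k}{k}-\frac{(x+y)t^{k-1}}{k-1}+\frac{xy\,t^{k-2}}{k-2}
\]
between $y$ and $x$, gives exactly the stated closed form of $f_{1,k}$. I would establish this first and then use it throughout. It also gives two useful derivative formulas. Differentiating under the integral (the boundary terms vanish because the integrand vanishes at $t=x$ and at $t=y$) yields
\[
\partial_x f_{1,k}=-\int_y^x t^{k-3}(t-y)\,dt,\qquad \partial_y f_{1,k}=-\int_y^x t^{k-3}(t-x)\,dt .
\]

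Items 1–4 are then short.
\begin{itemize}
\item Items 1 and 3: substitute $y=-x$ into the explicit formula. For item 3, compute $f_{2,k-1}(x,-x)$ as $\partial_y f_{1,k}(x,y)|_{y=-x}$, which is legitimate because $f_{1,k}(x,-x)=0$ for even $k$.
\item Item 2: for even $k$, plugging $y=-x$ into the explicit formula gives $kx^k-kx^k=0$, so $x+y$ divides $f_{1,k}$.
\item Item 4: substitute $t=y+s(x-y)$. This gives $f_{1,k}=(x-y)^3\int_0^1 (y+s(x-y))^{k-3}s(s-1)\,ds$. The remaining factor equals $-y^{k-3}/6\neq 0$ on the diagonal, so $(x-y)^4$ does not divide $f_{1,k}$.
\end{itemize}

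For item 5, suppose a projective root $(x:y)$ with $x\neq y$ is multiple. By Euler's relation both partial derivatives vanish there. Their difference is $-(x-y)(x^{k-2}-y^{k-2})/(k-2)$, so $x^{k-2}=y^{k-2}$. Substituting this into $\partial_x f_{1,k}=-\frac{x^{k-1}-y^{k-1}}{k-1}+\frac{y(x^{k-2}-y^{k-2})}{k-2}$ gives $x^{k-1}=y^{k-1}$. Together the two equations force $x=y$, a contradiction.

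For item 6, dehomogenize by $y=1$. Up to constants the two polynomials are
\[
g_k(u)=-(k-2)u^k+ku^{k-1}-ku+(k-2),\qquad g_{k+1}(u)=-(k-1)u^{k+1}+(k+1)u^k-(k+1)u+(k-1).
\]
First form the combination
\[
(k-2)g_{k+1}-(k-1)u\,g_k=-2u^k+k(k-1)u^2-(k-2)(2k-1)u+(k-1)(k-2).
\]
At a common root this lets me express $u^k$ through a quadratic in $u$. Feeding that into $g_k=0$, using $u^{k-1}=u^k/u$ since $u\neq0$, gives $u^{k-1}$ as well. Substituting both back into the combination leaves a fixed low-degree polynomial identity in $u$. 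I expect its only root to be $u=1$, which would prove item 6. This elimination is routine in principle, but the bookkeeping needs care.

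For item 7, let $H$ denote the expression. It has degree $2k+2$. By item 4 each summand is divisible by $(x-y)^6$. Exact multiplicity $6$ follows from evaluating $H/(x-y)^6$ on the diagonal using the leading factors $-y^{k-3}/6$ and $-y^{k-2}/6$ from item 4. This gives
\[
\frac{y^{2k-2}}{36}\left(\frac{1}{(k+1)k}-\frac{2}{k(k-1)}+\frac{1}{(k-1)(k-2)}\right)=\frac{y^{2k-2}}{36}\cdot\frac{6}{(k+1)k(k-1)(k-2)}\neq 0 .
\]
The bracket is a nonzero second difference of $1/(m(m-1))$.

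The remaining claim, that the other $2k-4$ roots are simple, is the main obstacle. I would first try to interpret $H$ as a discriminant. Observe that $H$ is the quadratic form in $(f_{1,k+1},f_{1,k})$ arising from \eqref{equation_fx1equalfx2_alternative} after eliminating the pair $(x_3+x_4,x_3x_4)$. A multiple root of $H$ would then correspond to a non-transverse point of the curve $C^*$, contradicting the smoothness in Proposition \ref{proposition_topological_classes}. If that does not go through cleanly, the fallback is a direct check: dehomogenize, and show that $H/(u-1)^6$ and its derivative have no common root by an elimination in the style of item 6, using the integral formulas for $\partial_x f_{1,k}$ and $\partial_y f_{1,k}$ to simplify.
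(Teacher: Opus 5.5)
You handle items 1--6 and the exact multiplicity $6$ at $x=y$ in item 7 correctly. Your integral representation, and your partial-derivative argument for item 5, are more explicit than the paper's bare gcd claims.

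The gap is the remaining claim of item 7: that the other $2k-4$ roots of $H$ are simple. This is not proved, and the route you lead with does not work. $H$ does not come from \eqref{equation_fx1equalfx2_alternative}. It is obtained from the face system \eqref{equation_2dimensional_zero} (with $k=d+3$) by eliminating $x_3=f_{1,k+1}/f_{1,k}$. So its roots describe candidate initial coefficients of branches at infinity, not points of $C^*$. Smoothness of $C^*$ inside the torus gives no control over whether such initial solutions are nondegenerate. In fact, the paper uses the simplicity precisely to \emph{verify} the nondegeneracy hypothesis of the converse part of Theorem \ref{theorem_restriction_on_faces}.

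Your fallback is also not an ``item 6 style'' elimination. That trick works because each of $g_k,g_{k+1}$ is linear in $u^k$ (after writing $u^{k\pm1}=u^{\pm1}u^k$). By contrast, $H$ and $H'$ are quadratic in $u^k$, so no linear combination removes the large powers. What is needed is a genuine two-variable elimination, which is how the paper proceeds:
\begin{enumerate}
\item treat $u=x^d$ as an independent variable in the dehomogenized $H$ and its derivative;
\item compute $\operatorname{Res}_u$, which vanishes only for $x\in\{\frac{d+4}{d+1},\frac{d+1}{d+4},0,1\}$;
\item compute $\operatorname{Res}_x$, which vanishes only for $u\in\{0,1\}$;
\item use the compatibility $u=x^d$ to discard the spurious candidates.
\end{enumerate}

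If you prefer a conceptual argument, your smoothness idea can work if aimed at the right object. Set $P(w)=\int_0^w t^{k-3}(t-x)(t-y)(t-z)\,dt=\frac{w^{k+1}}{k+1}+aw^k+bw^{k-1}+cw^{k-2}$. Then the face system reads $P(x)=P(y)$, $P(z)=0$ (for $z\neq0$). Since $P'$ vanishes at $x,y,z$, the chain rule through $(a,b,c)$ shows that $(x,y,z)\mapsto(P(x),P(y),P(z))$ has Jacobian a nonzero multiple of $(xyz)^{k-2}(x-y)^2(y-z)^2(z-x)^2$. At a root of $H$ off the diagonal, $x,y,z$ are distinct and nonzero; for example, $z=x$ would force $P$ to vanish to total order $k+3>\deg P$ at $0,x,y$. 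So the two equations are transverse there. Eliminating $z$ by the implicit function theorem then yields a simple root of $H$; this uses that $\partial_z$ of the first equation is $-f_{1,k}\neq0$, by item 6.

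Two smaller slips:
\begin{enumerate}
\item In item 6 the combination is $(k-2)g_{k+1}-(k-1)u\,g_k=-2u^k+k(k-1)u^2-2k(k-2)u+(k-1)(k-2)$; yours does not vanish at $u=1$. Substituting $u^k$ from it into $u\,g_k(u)$ (not back into the combination) gives exactly $-\tfrac{1}{2}k(k-1)(k-2)(u-1)^3$. So item 6 does close as you hoped.
\item In item 7 the diagonal value carries $y^{2k-4}$, not $y^{2k-2}$.
\end{enumerate}
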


    \begin{proof}
        The first two parts follow by direct substitution. Indeed, for even $k$, $f_{1,k}(x,-x) = 0$ , which proves the divisibility by $x + y$.

        For the third part, consider the expression $x^a - y^a$ for $a = 2^lb$, where $b$ is odd and $l > 0$. $x^a - y^a = (x^{b} - y^{b})(x^b+y^b)\prod\limits_{i=1}^{l-1}(x^{2^ib} +y^{2^ib})$. After dividing by $x+y$, we substitute $y = -x$ to obtain $2\cdot b\cdot 2^{l-1}x^{a-1} = ax^{a-1}$ if $l > 0$. 
        
        $f_{1,k}$ may be rewritten as $f_{1,k}(x,y)= \frac{(k-2)(y^k-x^k)+kxy(x^{k-2}-y^{k-2})}{k(k-1)(k-2)}$. Therefore, $f_{2,k-1}(x,-x) =\frac{-(k-2)kx^{k-1}-k(k-2)x^{k-1}}{k(k-1)(k-2)} =\frac{-2x^{k-1}}{k-1}$.

         For the fourth part, consider $F_{1,k}(x) = k(k-1)(k-2)f_{1,k}(x,1)$. For $k \ge 3$,
         \begin{itemize}
             \item $F_{1,k}(1) = -(k-2) + (k-2) - k + k = 0$;
             \item $F_{1,k}^\prime(1) = -k(k-2)+0-k+k(k-1) = 0$;
             \item $F^{\prime\prime}_{1,k}(1) = -k(k-1)(k-2) + 0 - 0 + k(k-1)(k-2) = 0$;
             \item $F_{1,k}^{\prime\prime\prime}(1) = -k(k-1)(k-2)^2 + k(k-1)(k-2)(k-3) = -k(k-1)(k-2)$. 
         \end{itemize}
          Hence, $F_{1,k}$ has a root $1$ of multiplicity exactly $3$. Therefore, $f_{1,k}(x,y)$ is divisible by $(x-y)^3$, but not by $(x-y)^4$.

        For the parts \ref{lemma_f1f2_xplusy_item5} and \ref{lemma_f1f2_xplusy_item6}, we again consider $F_{1,k}(x)$. $F_{1,k}(x)^\prime = -k(k-2)x^{k-1} + k + k(k-1)x^{k-2}$ and $F_{1,k+1}(x) = -(k-1)x^{k+1} + (k-1) -(k+1)x +  (k+1)x^k $. For $k \ge 3$, by the Euclidean algorithm we obtain $\operatorname{gcd}(F_{1,k},F_{1,k}^\prime) = (x-1)^2$ and $\operatorname{gcd}(F_{1,k},F_{1,k+1})=(x-1)^3$.

        For the last part multiply the equation by $(k+1)^3k^3(k-1)^3(k-2)^3$ and substitute $y = 1$. From the lemmas above it is clear that $x = 1$ is a root of multiplicity exactly $6$. Denote this polynomial by $Q(x)$. After substitution $k = d+3$ and $x^d = u$ we obtain the polynomial $H_1(u,x) = u^2A_1(x) + uA_2(x) + A_3(x)$. The same may be done for the derivative of $Q(x)$; we obtain $H_2(u,x) = u^2A_4(x) + uA_5(x) +A_6(x)$. Now we consider the common roots of $H_1$ and $H_2$. Compute resultants $\operatorname{Res}_u(H_1,H_2) = 16(d + 4)^5(d + 3)^5(d + 2)^5(d + 1)^5\cdot((d+4)x - (d+1))^2\cdot((d+1)x - (d + 4))^2\cdot(x - 1)^{10}\cdot x^7$ and $\operatorname{Res}_x(H_1,H_2) = 2^{16}3^9(d + 4)^{20}(d + 3)^{18}(d + 2)^{17}(d + 1)^{24}\cdot(u - 1)^{10}\cdot u^{11}$. Since the first resultant is $0$ only for $x \in \{\frac{d+4}{d+1}, \frac{d+1}{d+4}, 0, 1\}$. The second equation and $u = x^d$ restricts this set to $x \in \{0,1\}$. Since $x = 0$ is not the root of the initial equation, the only possible multiple root is $x = 1$.
    
    \end{proof}
\begin{subsubsection}{Two-dimensional faces}
    We now describe the tropicalizations of the curve $C^*$ at the points at infinity associated with two-dimensional compatible faces of the Newton polytopes of equations defining $C^*$.

    By the symmetries described in Proposition \ref{proposition_compatible_faces_description}, it suffices to consider the case $(\operatorname{val}_{x_1},\operatorname{val}_{x_2}, \operatorname{val}_{x_3}, \operatorname{val}_{x_4}) = (1,1,1,-3)$.

    \begin{prop}\label{prop_2dim}
    The tropicalization of a branch of $C^*$at infinity with valuation vector $(\operatorname{val}_{x_1}, \operatorname{val}_{x_2}, \operatorname{val}_{x_3}, \operatorname{val}_{x_4}) = (1, 1, 1, -3)$ is $0\odot x^d \oplus 3 \odot x^{d+1} \oplus 2 \odot x^{d+2} \oplus 1 \odot x^{d+3} \oplus 0 \odot x^{d+4}$. 
    \end{prop}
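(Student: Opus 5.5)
The plan is to compute the valuations of $a_1,a_2,a_3$ directly from the elementary symmetric functions of $x_1,x_2,x_3,x_4$, and to use the initial-form equations of Theorem \ref{theorem_restriction_on_faces} to rule out cancellation of leading terms. We already have $\operatorname{val}_{a_0}=\operatorname{val}_{a_4}=0$. Fix a branch with valuation vector $(1,1,1,-3)$ and write $x_i=c_it+\ldots$ for $i=1,2,3$, with $c_i\neq 0$. The constraint $x_1x_2x_3x_4=1$ then gives $x_4=(c_1c_2c_3)^{-1}t^{-3}+\ldots$.

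Up to the nonzero constants $\frac{1}{d+3},\frac{1}{d+2},\frac{1}{d+1}$, the coefficients are
\[
a_3\sim e_1(\overline{x}),\qquad a_2\sim e_2(\overline{x}),\qquad a_1\sim e_3(\overline{x}).
\]
Every monomial of $e_k(\overline{x})$ that contains $x_4$ has valuation at most $(k-1)-3<k$. Hence
\[
\operatorname{in}(e_1)=(c_1+c_2+c_3)\,t,\qquad \operatorname{in}(e_2)=e_2(c_1,c_2,c_3)\,t^2,\qquad \operatorname{in}(e_3)=c_1c_2c_3\,t^3,
\]
provided these coefficients are nonzero. The coefficient $c_1c_2c_3$ is automatically nonzero, which gives $\operatorname{val}_{a_1}=3$. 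Proving the proposition therefore reduces to showing $c_1+c_2+c_3\neq0$ and $e_2(c_1,c_2,c_3)\neq0$.

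To get constraints on $(c_1,c_2,c_3)$, I will extract the initial forms of \eqref{equation_fx1equalfx2_alternative} and \eqref{equation_fx3equalfx4_alternative} on the compatible two-dimensional faces of Proposition \ref{proposition_compatible_faces_description}, which have normal $(1,1,1)$.

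For the first equation (after dividing by $(x_1-x_2)^3$), the top-degree part comes from $f_{1,d+4}(x_1,x_2)$ and from the $x_3$-part of $(x_3+x_4)f_{1,d+3}(x_1,x_2)$. This gives
\[
f_{1,d+4}(c_1,c_2)-c_3f_{1,d+3}(c_1,c_2)=0.
\]
For the second equation, $x_4\to0$ relative to $x_3$. All three summands then contribute at valuation $d+4$, which gives
\[
\frac{c_3^2}{(d+4)(d+3)}-\frac{(c_1+c_2)c_3}{(d+3)(d+2)}+\frac{c_1c_2}{(d+2)(d+1)}=0.
\]
A cleaner interpretation of both equations is available. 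Set $f_0'(x)=x^{d}(x-c_1)(x-c_2)(x-c_3)$ with $f_0(0)=0$. Then the two equations say $f_0(c_1)=f_0(c_2)$ and $f_0(c_3)=0$, so $f_0$ is the initial (limit) polynomial of the branch. I will verify this identification by comparing with the formulas above.

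The main obstacle is the non-cancellation step. By homogeneity set $c_3=1$; I will treat the two forbidden cases as follows.

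(i) Suppose $c_1+c_2=-1$. Substituting into the quadratic equation determines $c_1c_2$ explicitly, so $c_1,c_2$ are the roots of a fixed quadratic. I then check, for example by computing a resultant in $d$ as in the proof of Lemma \ref{lemma_f1f2_xplusy}, that such $c_1,c_2$ cannot satisfy the first equation.

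(ii) Suppose $e_2=c_1c_2+c_1+c_2=0$. This is a linear relation between $p=c_1c_2$ and $s=c_1+c_2$. Together with the quadratic equation, it again gives finitely many pairs $(s,p)$, which I substitute into the first equation.

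To handle the first equation symmetrically, I would write it through $f_{1,k}(c_1,c_2)/(c_1-c_2)^3$ expressed in $s,p$. Alternatively I would use the parity facts of Lemma \ref{lemma_f1f2_xplusy}, in particular items 1–3 for the special case $c_1=-c_2$ that arises when $s=0$. If the direct resultant computation becomes unwieldy, my fallback is the $f_0$ picture: there, $c_1+c_2+c_3=0$ and $e_2=0$ are conditions on the coefficients of $f_0$, and I would try to show they are incompatible with $f_0(c_3)=0$ and $f_0(c_1)=f_0(c_2)$ by integrating $f_0'$ explicitly.
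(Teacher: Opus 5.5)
\textbf{There is a genuine gap: the non-cancellation step is not carried out, and the tool you propose for it cannot do it.}

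Your reduction is the same as the paper's. The leading parts of $e_1,e_2,e_3$ are $e_k(c_1,c_2,c_3)t^k$, so $\operatorname{val}_{a_1}=3$ is automatic. Everything then hinges on showing $c_1+c_2+c_3\neq 0$ and $e_2(c_1,c_2,c_3)\neq 0$, where $(c_1,c_2,c_3)$ solves the initial-form system \eqref{equation_2dimensional_zero}. Your two initial equations coincide with the paper's. Your reading of them as $f_0(c_1)=f_0(c_2)$ and $f_0(c_3)=0$, with $f_0'=x^d(x-c_1)(x-c_2)(x-c_3)$, is also correct.

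The problem is that $d$ sits in the exponent. Once the quadratic equation has pinned down $c_1+c_2$ and $c_1c_2$, the first equation becomes a condition on a root $r=c_2/c_1$ of a $d$-dependent quadratic. In case (i) this condition is $r^{d+1}(2r+1)=r+2$. A resultant with coefficients polynomial in $d$, even with the substitution $u=x^d$ from Lemma \ref{lemma_f1f2_xplusy}, only returns the candidate value of $u$. In that lemma the candidates were explicit rationals, so the final check was trivial. Here $r$ is a quadratic irrationality depending on $d$, so excluding $r^{d+1}(2r+1)=r+2$ for all $d\ge 3$ requires a size estimate. Your proposal never supplies one, and the ``$f_0$ fallback'' only restates the problem.

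\textbf{The missing idea, as in the paper.} Normalize by the ratio $x=c_2/c_1$ rather than by $c_3=1$.
\begin{itemize}
\item Case (i) then yields the palindromic quadratic $h_1(x)=2(d+1)x^2+(5d+8)x+2(d+1)$.
\item Case (ii) yields $h_2(x)=2(d+4)x^2+(5d+17)x+2(d+4)$.
\item Both quadratics have real negative reciprocal roots.
\item Reducing the first equation modulo $h_i$ collapses it to $g_1\propto -2x^{d+3}-x^{d+2}+x^2+2x$, respectively $g_2\propto x^{d+5}+2x^{d+4}-2x-1$.
\item Both $g_1/x$ and $g_2$ are anti-palindromic, so only one root of each $h_i$ needs checking.
\end{itemize}
That root can be localized so that one term dominates:
\begin{itemize}
\item For $r_1\in(-\tfrac12,0)$, the term $|2r_1|$ exceeds the sum of the other three terms of $g_1$.
\item For $r_2\in(-2+\tfrac{2}{d+4},-2+\tfrac{3}{d+4})$, one has $|r_2+2|\,|r_2|^{d+4}>3>|2r_2+1|$, via Bernoulli's inequality.
\end{itemize}
Without some such analytic argument, your cases (i) and (ii) remain open.

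\textbf{A secondary omission.} The paper also proves that branches with valuation vector $(1,1,1,-3)$ actually exist. It does this using the $2d+2$ simple roots of \eqref{equation_2dim_finite_root_substitute} and the converse part of Theorem \ref{theorem_restriction_on_faces}. Your proposal does not address existence. The statement as phrased only says what the tropicalization of such a branch is, but the later fan computation relies on existence.
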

    
    \begin{remm}
        For even $d$ this tropicalization occurs only for the class $C_2$ defined in Proposition \ref{proposition_topological_classes}. Indeed, in class $C_1$ the corresponding branches satisfy 
        $\operatorname{val}_{a_3} = \operatorname{val}_{a_1} = -\infty$.
        
    \end{remm}

    \begin{proof}
        By the normalization above, $\operatorname{val}_{a_0} = \operatorname{val}_{a_4} = 0$. Also $(\operatorname{val}_{x_1},\operatorname{val}_{x_2}, \operatorname{val}_{x_3}, \operatorname{val}_{x_4}) = (1,1,1,-3)$ implies that $\operatorname{val}_{a_1} = 3$, since in that case the leading term of $a_1(\overline{x})$ is a nonzero multiple of $x_1x_2x_3$. The leading terms of $a_3(\overline{x})$ and $a_2(\overline{x})$ are proportional to $x_1+x_2+x_3$ and $x_1x_2 + x_2x_3 + x_3x_1$ respectively. We will show that $\operatorname{val}_{(x_1 + x_2 + x_3)} = 1$ and $ \operatorname{val}_{(x_1x_2 + x_2x_3 + x_3x_1)} = 2$. To prove this, it suffices to show that $\operatorname{in}_{x_1} + \operatorname{in}_{x_2}+\operatorname{in}_{x_3} \neq 0, \operatorname{in}_{x_1x_2} + \operatorname{in}_{x_2x_3}+\operatorname{in}_{x_3x_1} \neq 0$.

    Proposition \ref{proposition_compatible_faces_description} and Theorem \ref{theorem_restriction_on_faces} imply that $\operatorname{in}_{x_1}, \operatorname{in}_{x_2}, \operatorname{in}_{x_3}, \operatorname{in}_{x_4}$ give a solution of the system

    \begin{equation}\label{equation_2dim_original_sysytem}
       \begin{cases}
        \frac{f(x_1)-f(x_2)}{(x_1-x_2)^3} |_{\Gamma_1} = 0\text{,} \\
        \frac{f(x_3)-f(x_4)}{(x_3-x_4)^3} |_{\Gamma_2} = 0\text{,}
    \end{cases} 
    \end{equation}

    where $\Gamma_1$ and $\Gamma_2$ come from Proposition \ref{proposition_compatible_faces_description}. 

    Multiplying the first equation by $(x_1-x_2)^3$ and dividing the second equation by $x_3^{d-1}$, we obtain the system
    \begin{equation}\label{equation_2dimensional_zero}
        \begin{cases}
        f_{1, d+4}(x_1,x_2) - x_3f_{1, d+3}(x_1,x_2) = 0\text{,} \\
        \dfrac{x_3^{2}}{(d+4)(d+3)} - \dfrac{(x_1+x_2)x_3}{(d+3)(d+2)} + \dfrac{x_1x_2}{(d+2)(d+1)} = 0 \text{.}
        \end{cases}
    \end{equation}

    The initial coefficients $\operatorname{in}_{x_1}, \operatorname{in}_{x_2}, \operatorname{in}_{x_3}$ satisfy system \eqref{equation_2dimensional_zero}. Therefore, to prove $\operatorname{val}_{a_3} = 1, \operatorname{val}_{a_2} = 2$, it suffices to show that the systems 

    \begin{equation}\label{equation_2dimensional_first}
        \begin{cases}
            f_{1, d+4}(x_1,x_2) - x_3f_{1, d+3}(x_1,x_2) = 0\text{,} \\
        \dfrac{x_3^{2}}{(d+4)(d+3)} - \dfrac{(x_1+x_2)x_3}{(d+3)(d+2)} + \dfrac{x_1x_2}{(d+2)(d+1)} = 0 \text{,} \\
        x_1+x_2+x_3 = 0
        \end{cases}
    \end{equation}

    \begin{equation}\label{equation_2dimensional_second}
        \begin{cases}
            f_{1, d+4}(x_1,x_2) - x_3f_{1, d+3}(x_1,x_2) = 0\text{,} \\
        \dfrac{x_3^{2}}{(d+4)(d+3)} - \dfrac{(x_1+x_2)x_3}{(d+3)(d+2)} + \dfrac{x_1x_2}{(d+2)(d+1)} = 0 \text{,} \\
        x_1x_2+x_2x_3+x_3x_1 = 0
        \end{cases}
    \end{equation}

    have no solutions.

    To simplify the system \eqref{equation_2dimensional_first}, we proceed as follows:

    \begin{itemize}
        \item substitute $x_3 = - x_1 - x_2$ from the third equation to the other equations;
        \item multiply the first equation by $(d+4)(d+3)(d+2)(d+1)$;
        \item multiply the second equation by $(d+4)(d+2)(d+1)$.
    \end{itemize}

    To simplify the system \eqref{equation_2dimensional_second}, we proceed as follows:

    \begin{itemize}
        \item  substitute $x_3 = - \frac{x_1 x_2}{x_1 + x_2}$ from the third equation to the other equations ($x_1+x_2 \neq 0$, otherwise $x_1x_2+x_2x_3+x_3x_1 = -x_1^2\neq 0$);
        \item multiply the first equation by $(d+4)(d+3)(d+2)(d+1)(x_1+x_2)$;
        \item multiply the second equation by $\frac{(d+4)(d+3)(d+1)(x_1+x_2)^2}{x_1x_2}$.
    \end{itemize}

    As a result, we obtain

    \begin{equation}\label{equation_dimension2_final_first}
        \begin{cases}
             (d+3)(d+4)(x_1^{d+3}x_2 + x_1^{d+2}x_2^2 - x_1x_2^{d+3} - x_1^2x_2^{d+2})
            +2(d+1)(d+3)(x_2^{d+4}-x_1^{d+4}) = 0 \text{,}\\
            2(d+1)x_1^2+(5d+8)x_1x_2+2(d+1)x_2^2 = 0 \text{,}
        \end{cases}
    \end{equation}

    \begin{equation}\label{equation_dimension2_final_second}
        \begin{cases}
             2(d+2)(d+4)(x_1^{d+3}x_2^2 - x_1^2x_2^{d+3} )
            +(d+1)(d+2)(x_2^{d+5} + x_1x_2^{d+4} - x_1^{d+4}x_2 -x_1^{d+5} ) = 0\text{,} \\
            2(d+4)x_1^2+(5d+17)x_1x_2+2(d+4)x_2^2 = 0 \text{.}
        \end{cases}
    \end{equation}

    These systems are considered in Lemma \ref{lemma_2dim_system_unsolvable}.

    In order to show that at least one branch of $C^*$ at infinity with valuation vector \\ $(\operatorname{val}_{x_1}, \operatorname{val}_{x_2}, \operatorname{val}_{x_3}, \operatorname{val}_{x_4}) = (1, 1, 1, -3)$ exists, consider the system \eqref{equation_2dimensional_zero} and substitute $x_3 = \frac{f_{1, d+4}(x_1,x_2)}{f_{1, d+3}(x_1,x_2)}$ in the second equation, multiplied by $f_{1, d+3}^2(x_1,x_2)$, to obtain a homogeneous equation

    \begin{equation}\label{equation_2dim_finite_root_substitute}
        \frac{f_{1,d+4}^2(x_1,x_2)}{(d+4)(d+3)}-\frac{(x_1+x_2)f_{1,d+4}(x_1,x_2)f_{1,d+3}(x_1,x_2)}{(d+3)(d+2)}+\frac{x_1x_2f_{1,d+3}^2(x_1,x_2)}{(d+2)(d+1)} = 0\text{.}
    \end{equation}

By Lemma \ref{lemma_f1f2_xplusy}, the left-hand side of \eqref{equation_2dim_finite_root_substitute} has $2d+2$ simple projective roots $(1, r_i)$ which are not equal to $(1,1)$. The same lemma states that $f_{1,d+3}$ and $f_{1,d+4}$ have no common projective roots except $(1:1)$; hence $f_{1,d+3}(1,r_i), f_{1,d+4}(1,r_i)\neq 0$. Therefore, the first equation uniquely determines 

\begin{equation*}
    x_3 = \frac{f_{1, d+4}(1,r_i)}{f_{1, d+3}(1,r_i)} 
\end{equation*}

For each root $r_i$.Since $r_i$ are simple roots, the Jacobian of the initial-form system in the affine chart $x_1 = 1$ has rank $2$. The chart $x_1 = 1$ is transverse to the $C^*$-weight orbit of the quotient torus $\CC^*\PP_{(1,1,1)}$. Thus the hypotheses of the converse part of Theorem \ref{theorem_restriction_on_faces} are satisfied, and there exists a branch at infinity with valuation vector $(1,1,1,-3)$.

    \end{proof}
    
    The following lemma completes the proof of Proposition \ref{prop_2dim}.   
    \begin{lemmm}\label{lemma_2dim_system_unsolvable}
        Systems \eqref{equation_dimension2_final_first} and \eqref{equation_dimension2_final_second} have no solutions.
    \end{lemmm}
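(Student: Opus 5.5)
Both systems \eqref{equation_dimension2_final_first} and \eqref{equation_dimension2_final_second} are homogeneous in $(x_1,x_2)$. We only need to exclude solutions with $x_1x_2\neq 0$, because the initial coefficients are nonzero. So I would pass to the affine variable $r=x_2/x_1$ and show that no root of the quadratic equation is a root of the first equation.

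\textbf{Step 1: the quadratic.} For \eqref{equation_dimension2_final_first} the quadratic is $2(d+1)r^2+(5d+8)r+2(d+1)=0$.
\begin{itemize}
\item Its discriminant is $(5d+8)^2-16(d+1)^2=(d+4)(9d+12)>0$, so both roots are real and negative.
\item The product of the roots is $1$, so they have the form $r=-\rho,\,-1/\rho$ with $\rho>1$.
\item The equation itself gives $\rho+\rho^{-1}=\frac{5d+8}{2(d+1)}\in(5/2,\,23/8]$ for $d\ge3$, hence $\rho\in(2,\rho_{\max}]$ with explicit bounds.
\end{itemize}
The same applies to \eqref{equation_dimension2_final_second}. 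There $\rho+\rho^{-1}=\frac{5d+17}{2(d+4)}\in(5/2,\,32/14]$; note this value exceeds $2$, so again $\rho>1$ is real, but now $\rho$ can be close to $2$ and not much more.

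\textbf{Step 2: reduce to one root.} The first equation of each system, written as a polynomial $P(r)$ of degree $d+4$ (respectively $d+5$), is anti-reciprocal: $r^{\deg P}P(1/r)=-P(r)$. This holds because of the antisymmetry $x_1\leftrightarrow x_2$ visible in its form. Hence $P(-\rho)=0$ if and only if $P(-1/\rho)=0$, and it suffices to treat $r=-\rho$ with $\rho>1$.

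\textbf{Step 3: dominant-term estimate.} For \eqref{equation_dimension2_final_first}, after dividing by $d+3$ the polynomial is
\[
(d+4)\bigl(r+r^2-r^{d+3}-r^{d+2}\bigr)+2(d+1)\bigl(r^{d+4}-1\bigr).
\]
\begin{itemize}
\item Group the top terms as $r^{d+2}\bigl(2(d+1)r^2-(d+4)r-(d+4)\bigr)$.
\item At $r=-\rho$ the bracket equals $2(d+1)\rho^2+(d+4)\rho-(d+4)$, which is positive and of size roughly $2(d+1)\rho^2$.
\item So the top part has absolute value at least $\rho^{d+2}\cdot c\,d\rho^2$.
\item The remaining terms satisfy $|(d+4)(r+r^2)-2(d+1)|\le (d+4)(\rho+\rho^2)+2(d+1)$.
\item Since $\rho>2$ and $d\ge3$, the factor $\rho^{d+2}\ge 32$ makes the first quantity strictly larger, so $P(-\rho)\ne0$.
\end{itemize}
The same grouping, with the top terms $r^{d+3}\bigl((d+1)(d+2)(r^2+r)-2(d+2)(d+4)\bigr)$ against the low terms, handles \eqref{equation_dimension2_final_second}.

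\textbf{Main obstacle.} I expect the difficulty to be making these inequalities uniform in $d$ and valid for both parities of $d$, since the sign of $r^{d+2}$ alternates. I would write every term in absolute value so that parity plays no role. For \eqref{equation_dimension2_final_second} the dominant bracket $(d+1)(d+2)(\rho^2-\rho)-2(d+2)(d+4)$ may fail to dominate when $\rho$ is close to $2$. In that case I would first try to reduce $P$ modulo the quadratic via the recurrence $r^{n+1}=-s\,r^n-r^{n-1}$, with $s=\rho+\rho^{-1}$. This expresses $P(r)$ through Chebyshev-type quantities $\rho^n\pm\rho^{-n}$, and then I would bound those. The finitely many small $d$ where the crude bounds do not yet separate would be checked by direct evaluation, or by computing the resultant of the two equations for those specific $d$.
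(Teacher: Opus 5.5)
\textbf{Gap: the second system is not proved.} Your treatment of the first system is sound. There $\rho+\rho^{-1}=\frac{5d+8}{2(d+1)}>\frac52$ forces $\rho>2$, the bracket $2(d+1)\rho^2+(d+4)(\rho-1)$ exceeds $2(d+1)\rho^2$, and $\rho^{d+2}\ge 32$ beats the low-order terms. For the second system, however, you assert that ``the same grouping handles'' it, then doubt that the bracket dominates, and retreat to an unexecuted plan: a Chebyshev-type reduction with unspecified bounds plus an unspecified finite check.

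\textbf{Where the doubt comes from.} The range is miscomputed. The quantity $\frac{5d+17}{2(d+4)}$ increases with $d$, so it lies in $[\frac{16}{7},\frac52)$, not in ``$(5/2,32/14]$'' (which is an empty interval). Hence $\rho<2$ strictly. More precisely $\rho\in[\frac{16+\sqrt{60}}{14},2)$, so $\rho>1.69$.

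\textbf{The fix.} With $\rho<2$, your own grouping closes the argument at once.
Since $0<\rho^2-\rho<2$, the bracket $(d+1)(\rho^2-\rho)-2(d+4)$ (after removing the factor $d+2$) is negative with absolute value greater than $2(d+4)-2(d+1)=6$, uniformly in $d$. It is never in danger of vanishing.
The low-order part at $r=-\rho$ equals $2(d+4)\rho^2+(d+1)(\rho-1)<9d+33$.
The top part has absolute value greater than $6\rho^{d+3}\ge 6\cdot 1.69^{d+3}$. This is $>139>60$ at $d=3$, and it is multiplied by $1.69$ at each step while $9d+33$ only increases by $9$. So the polynomial does not vanish at $-\rho$ for any $d\ge3$.

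\textbf{Comparison with the paper.} Once repaired, your route differs from the paper's in a useful way. The paper first reduces the high-degree equation modulo the quadratic, obtaining (up to constants) the four-term anti-reciprocal polynomials $-2x^{d+3}-x^{d+2}+x^2+2x$ and $x^{d+5}+2x^{d+4}-2x-1$. In the first case it checks the small root $r_1\in(-\frac12,0)$, where the linear term dominates. In the second case it checks the root $r_2\in(-2+\frac{2}{d+4},-2+\frac{3}{d+4})$ of larger modulus. There the leading factor $|2+r_2|$ is only of order $\frac{1}{d+4}$, so a Bernoulli-inequality estimate is needed to get $|2+r_2|\,|r_2|^{d+4}>3>|2r_2+1|$. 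Working with the unreduced polynomial, as you do, avoids that near-cancellation and gives a cruder but simpler estimate, provided you know $\rho<2$.
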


    \begin{proof}
    First, rewrite these equations in terms of the variable $x = \frac{x_2}{x_1}$ and eliminate the terms of degrees ${d+4}$ and ${0}$ in the first equation in the first system and of degrees ${d+3}$ and $2$ in the second system to obtain

    \begin{equation*}
    \begin{cases}
          g_1(x) = -6(d+3)(d+2)x^{d+3} - 3(d+3)(d+2)x^{d+2} + 3(d+3)(d+2)x^{2} + 6(d+3)(d+2)x = 0\\
        h_1(x) = 2(d+1)x^2+(5d+8)x+2(d+1) = 0
    \end{cases}
    \end{equation*}

    \begin{equation*}
    \begin{cases}
         g_2(x) = 3(d+3)(d+2)x^{d+5} + 6(d+3)(d+2)x^{d+4} - 6(d+3)(d+2)x - 3(d+3)(d+2) = 0 \\
        h_2(x) = 2(d+4)x^2+(5d+17)x+2(d+4) = 0 
    \end{cases}
    \end{equation*}

    $g_1(x)/x$ and $g_2(x)$ are anti-reciprocal polynomials (a polynomial $p(x)$ is called anti-reciprocal if $p(x) = -x^{\deg p}p(x^{-1})$). Hence, every nonzero root $x$ of $g_1$ or $g_2$ is accompanied by the reciprocal root $x^{-1}$. By Vieta's formulas, the product of the two roots of $h_1(x)$ is equal to $1$; the same holds for $h_2(x)$. Since the two roots of both $h_1$ and $h_2$ are reciprocal, it remains to prove that the root $r_1 = \frac{-(5d+8) + \sqrt{(5d+8)^2-16(d+1)^2}}{4(d+1)}$ of $h_1(x)$ is not a root of $g_1(x)$ and a root $r_2 =\frac{-(5d+17) - \sqrt{(5d+17)^2-16(d+4)^2}}{4(d+4)}$ of $h_2(x)$ is not a root of $g_2(x)$. 
    
    The following estimates hold:
    
    \begin{itemize}
        \item $ 0 > r_1 > \frac{-(5d+8) + (3d+6)}{4(d+1)} = -0.5$;
        \item $r_2 > \frac{-(5d+17) - (3d + 7)}{4(d+4)} = \frac{-(8d+32)+8}{4(d+4)} = -2 + \frac{2}{d+4}$;
        \item $r_2 < \frac{-(5d+17) - (3d + 3)}{4(d+4)} = \frac{-(8d+32)+12}{4(d+4)} = 
        -2 + \frac{3}{d+4} < 0$.
    \end{itemize} 

    $g_1(x) = 3(d+3)(d+2)(-2x^{d+3} -x^{d+2 }+ x^{2} + 2x)$. The following inequalities hold: 
    
    \begin{itemize}
        \item $|2r_1^{d+3}| < |0.5r_1^{d+1}| < 0.5|r_1|$;
        \item $|r_1^{d+2}|<|0.5r_1^{d+1}|<0.5|r_1|$;
        \item $|r_1^2| < 0.5|r_1|$;
        \item $|-2r_1| > 1.5|r_1|$.
    \end{itemize} Therefore, $|-2r_1| > 1.5|r_1|>|-2r_1^{d+3} -r_1^{d+2 }+ r_1^{2}|$. Hence, $r_1$ is not a root of $g_1(x)$.
    \\
    $g_2(x) = 3(d+3)(d+2)(x^{d+5} + 2x^{d+4}-  2x - 1) $. The following inequalities hold: 

    \begin{itemize}
        \item $|-2r_2-1| < 3$;
        \item $|r_2^{d+5} + 2r_2^{d+4}| = |2 + r_2|\cdot|r_2|^{d+4} > \frac{2}{d+4}(2 - \frac{3}{d + 4})^{d+4} =  \frac{2}{d+4}(1 + \frac{d+1}{d + 4})^{d+4} $ \text{.}
    \end{itemize}

    By Bernoulli's inequality,   
    \begin{equation*}
    \begin{split}
        \frac{2}{d+4}\left(1 + \frac{d+1}{d + 4}\right)^{d+4} = \frac{2}{d+4}\left(1 + \frac{d+1}{d + 4}\right)^{\frac{(d+4)(d+2)}{2(d+1)}}\left(\left(1 + \frac{d+1}{d + 4}\right)^{\frac{(d+4)d}{4(d+1)}}\right)^2 \ge \\ 
        \ge \frac{2}{d+4} \cdot \left(1 + \frac{d+2}{2}\right)\left(1 + \frac{d}{4}\right)^2 \ge \left(1 + \frac{d}{4}\right)^2 \ge \frac{49}{16} > 3 \text{.}
    \end{split}
    \end{equation*}

     Therefore, $|r_2^{d+5} + 2r_2^{d+4}| > |-r_2-2|$. Hence, $r_2$ is not a root of $g_2(x)$.
    \end{proof}

\end{subsubsection}

\begin{subsubsection}{One-dimensional skew faces}

We now compute the tropicalizations of the curve $C^*$ at the points at infinity associated with one-dimensional compatible faces of the Newton polytopes of equations defining $C^*$.

    By the symmetries described in Proposition \ref{proposition_compatible_faces_description}, it suffices to consider the case $(\operatorname{val}_{x_1},\operatorname{val}_{x_2}, \operatorname{val}_{x_3}, \operatorname{val}_{x_4}) = (1,1,-1,-1)$.
    
    By the normalization above, $\operatorname{val}_{a_0} = \operatorname{val}_{a_4} = 0$. Also $(\operatorname{val}_{x_1},\operatorname{val}_{x_2}, \operatorname{val}_{x_3}, \operatorname{val}_{x_4}) = (1,1,-1,-1)$ implies that $\operatorname{val}_{a_2} = 2$, since in that case the leading term of $a_2(\overline{x})$ is proportional to $x_1x_2$. The leading terms of $a_3(\overline{x})$ and $a_1(\overline{x})$ are proportional to $x_1+x_2$ and $x_1x_2(x_3+x_4)$ respectively. The expected valuations of these terms are $\operatorname{val}_{(x_1 + x_2)} = 1$ and $ \operatorname{val}_{x_1x_2(x_3+x_4)} = 1$; the last condition is equivalent to $\operatorname{val}_{{(x_3+x_4)} = -1}$. To prove this, it suffices to show that $\operatorname{in}_{x_1} + \operatorname{in}_{x_2}\neq 0, \operatorname{in}_{x_3}+\operatorname{in}_{x_4} \neq 0$. 

    Proposition \ref{proposition_compatible_faces_description} and Theorem \ref{theorem_restriction_on_faces} imply that the initial coefficients $\operatorname{in}_{x_1}, \operatorname{in}_{x_2}, \operatorname{in}_{x_3}, \operatorname{in}_{x_4}$ satisfy the corresponding initial-form equations

    \begin{equation*}
       \begin{cases}
        \frac{f(x_1)-f(x_2)}{(x_1-x_2)^3} |_{\Gamma_1} = 0\text{,} \\
        \frac{f(x_3)-f(x_4)}{(x_3-x_4)^3} |_{\Gamma_2} = 0\text{,}
    \end{cases} 
    \end{equation*}

    where $\Gamma_1$ and $\Gamma_2$ come from Proposition \ref{proposition_compatible_faces_description}.

    Multiply the first equation of the system by $(x_1-x_2)^3$ and the second one by $(x_3-x_4)^3$ to obtain

    \begin{equation}
       \begin{cases}\label{equation_1dimensional_zero}
        f_{1,d+4}(x_1,x_2) = 0\text{,} \\
        x_1x_2f_{1, d + 2}(x_3,x_4) = 0\text{.}
    \end{cases} 
    \end{equation}

    \begin{lemmm}\label{lemma_1dim_finite_roots}
        Each solution of the system \eqref{equation_1dimensional_zero} subject to the condition $x_4 = \frac{1}{x_1x_2x_3}$ is isolated in the quotient torus $\CC^*\PP^2_{(\operatorname{val}_{x_1}, \operatorname{val}_{x_2}, \operatorname{val}_{x_3})}$. All solutions satisfying $x_1\neq x_2$ and $x_3 \neq x_4$ satisfy the hypothesis of the converse part of Theorem \ref{theorem_restriction_on_faces}.
    \end{lemmm}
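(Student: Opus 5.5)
The system \eqref{equation_1dimensional_zero} decouples, and I would exploit this. On the torus the factor $x_1x_2$ never vanishes, so the system is equivalent to
\[
f_{1,d+4}(x_1,x_2)=0,\qquad f_{1,d+2}(x_3,x_4)=0 .
\]
Each equation is homogeneous in its own pair of variables. The first fixes the ratio $r=x_2/x_1$ as a projective root $(1:r)$ of $f_{1,d+4}$, and the second fixes $s=x_4/x_3$ as a root $(1:s)$ of $f_{1,d+2}$. Substituting $x_4=\frac{1}{x_1x_2x_3}$ turns the second condition into $x_1^2 r\, x_3^2 s=1$. So in the coordinates $(x_1,x_2,x_3)$ the solution set is the union over pairs $(r,s)$ of the curves
\[
x_2=r x_1,\qquad x_3^2=\frac{1}{r s\,x_1^{2}} .
\]
Each such curve is a single orbit (or a finite union of orbits) of the weight-$(1,1,-1)$ action $\lambda\cdot(x_1,x_2,x_3)=(\lambda x_1,\lambda x_2,\lambda^{-1}x_3)$, which preserves both defining conditions. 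Since $f_{1,d+4}$ and $f_{1,d+2}$ have finitely many projective roots, the image in $\CC^*\PP^2_{(1,1,-1)}$ is finite, and every solution is isolated. This proves the first claim.

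For the second claim I would discard the degenerate roots. By Lemma \ref{lemma_f1f2_xplusy}, item 4, $(1:1)$ is a root of multiplicity exactly $3$, and it corresponds precisely to $x_1=x_2$ or $x_3=x_4$. By item \ref{lemma_f1f2_xplusy_item5}, every other projective root of $f_{1,d+4}$ and of $f_{1,d+2}$ is simple. Hence every solution with $x_1\neq x_2$ and $x_3\neq x_4$ comes from simple roots $r\neq1$ and $s\neq1$. The initial equations meet properly, because they involve disjoint pairs of variables. The Jacobian rank is then computed in the chart $x_1=1$, which is transverse to the weight orbit since $\operatorname{val}_{x_1}=1\neq0$. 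In that chart the unknowns are $(x_2,x_3)$ and the equations become
\[
F(x_2)=f_{1,d+4}(1,x_2)=0,\qquad G(x_2,x_3)=f_{1,d+2}\Bigl(x_3,\tfrac{1}{x_2x_3}\Bigr)=0 .
\]
The Jacobian matrix is triangular. Its entry $\partial F/\partial x_2=f_{1,d+4}'(1,r)$ is nonzero because $r$ is simple. For $\partial G/\partial x_3$, I would use homogeneity of degree $d+2$: write $G=x_3^{d+2}\phi(s)$ with $s=\frac{1}{x_2x_3^2}$ and $\phi(s)=f_{1,d+2}(1,s)$. Then
\[
\partial_{x_3}G=(d+2)x_3^{d+1}\phi(s)-2x_3^{d+2}\phi'(s)\frac{s}{x_3}=-2x_3^{d+1}s\,\phi'(s)\neq0,
\]
using $\phi(s)=0$, $s\neq0$, and simplicity of the root $s$. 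So the rank is $2=n-1$, and the converse part of Theorem \ref{theorem_restriction_on_faces} applies. It yields branches with primitive valuation vector $(1,1,-1)$.

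The main obstacle I expect is the precise transversality and primitivity bookkeeping in the converse part of Theorem \ref{theorem_restriction_on_faces}, together with confirming that the restricted forms really are the displayed ones. For the latter, I would check that the face $\overline{\sigma}([v_3,v_7])$ of $\Delta_2$, with $\sigma=(13)(24)$, carries exactly the $x_1x_2 f_{1,d+2}(x_3,x_4)$ part after multiplication by $(x_3-x_4)^3$. I would do this by computing the pairing of $(1,1,-1)$ against the monomials of \eqref{equation_fx3equalfx4_alternative}.
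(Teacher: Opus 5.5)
Your proposal is correct and follows the same route as the paper. Both arguments split the system into the two binary forms, use finiteness of their projective roots (the paper counts at most $2(d+4)(d+2)$ points $(1:a:\pm\sqrt{b/a})$) to get isolatedness, and use simplicity of every root other than $(1:1)$ to get Jacobian rank $2$ in the transverse chart $x_1=1$; your explicit triangular Jacobian, with $\partial_{x_3}G=-2x_3^{d+1}s\,\phi'(s)\neq 0$, supplies a step the paper only asserts.
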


    \begin{proof}
        Let $A$ be the set of solutions of  $f_{1,d+4}(x_1, x_2) = 0$ in the corresponding weighted projective line, and let $B$ be the analogous set of solutions of $f_{1,d+2}(x_3, x_4) = 0$. Then $|A| \le d+4$ and $|B| \le d+2$. For each pair $(a,b) \in A \times B$, the conditions $a = \frac{x_2}{x_1}$ and $b = x_1x_2x_3^2$ determine at most two points in the quotient torus $(1: a:\pm \sqrt{\frac{b}{a}}) \in \CC^*\PP^2_{(\operatorname{val}_{x_1}, \operatorname{val}_{x_2}, \operatorname{val}_{x_3})}$. Hence, the total number of solutions does not exceed $2(d+4)(d+2)$ and consequently each solution is isolated. By Lemma \ref{lemma_f1f2_xplusy}, each projective root of $f_{1,d+4}$ and $f_{1,d+2}$, except $(1:1)$, is simple. Thus, in the affine chart $x_1 = 1$, the Jacobian of the initial system has rank $2$. This chart is transverse to the $C^*$-weight orbit; hence each solution of \eqref{equation_1dimensional_zero} satisfying $x_1\neq x_2$ and $x_3 \neq x_4$ satisfies the hypothesis of the converse part of Theorem \ref{theorem_restriction_on_faces}.

    \end{proof}

    \begin{prop}\label{proposition_1diminsional_skew_odd}
        For odd $d$ the tropicalization of a branch of $C^*$ at infinity with valuation vector $(\operatorname{val}_{x_1}, \operatorname{val}_{x_2}, \operatorname{val}_{x_3}, \operatorname{val}_{x_4}) = (1, 1, -1, -1)$ is equal to $0 \odot x^d \oplus 1 \odot x^{d+1} \oplus 2 \odot x^{d+2} \oplus 1 \odot x^{d+3}  \oplus 0 \odot x^{d+4}$. 
    \end{prop}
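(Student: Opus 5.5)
By the discussion preceding Lemma \ref{lemma_1dim_finite_roots}, we already have $\operatorname{val}_{a_0}=\operatorname{val}_{a_4}=0$ and $\operatorname{val}_{a_2}=2$. The claimed tropicalization therefore reduces to two nonvanishing statements about initial coefficients: $\operatorname{in}_{x_1}+\operatorname{in}_{x_2}\neq 0$, which gives $\operatorname{val}_{a_3}=1$, and $\operatorname{in}_{x_3}+\operatorname{in}_{x_4}\neq 0$, which gives $\operatorname{val}_{x_3+x_4}=-1$ and hence $\operatorname{val}_{a_1}=1$. One detail needs checking here. The leading part of $a_1$ is $x_1x_2(x_3+x_4)$, and no competing term of valuation $\ge 1$ can cancel it, because $x_3x_4(x_1+x_2)$ has valuation $-1$. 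So once $\operatorname{in}_{x_3}+\operatorname{in}_{x_4}\neq0$, we get $\operatorname{val}_{a_1}=2-1=1$.

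By Theorem \ref{theorem_restriction_on_faces} and Proposition \ref{proposition_compatible_faces_description}, the initial coefficients satisfy system \eqref{equation_1dimensional_zero}. Since $x_1x_2\neq0$ in the torus, this means $f_{1,d+4}(\operatorname{in}_{x_1},\operatorname{in}_{x_2})=0$ and $f_{1,d+2}(\operatorname{in}_{x_3},\operatorname{in}_{x_4})=0$. First I rule out $\operatorname{in}_{x_1}=\operatorname{in}_{x_2}$ and $\operatorname{in}_{x_3}=\operatorname{in}_{x_4}$ (the root $(1:1)$). The curve $C^*$ requires $x_1\neq x_2$, but the initial coefficients could still coincide. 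I would handle this by passing to the equations divided by $(x_1-x_2)^3$ and $(x_3-x_4)^3$, whose restrictions to the faces are $f_{1,d+4}/(x_1-x_2)^3$ and $f_{1,d+2}/(x_3-x_4)^3$. By Lemma \ref{lemma_f1f2_xplusy}(4) these are nonzero at $(1:1)$. So the initial coefficients are roots of these quotients, and $(1:1)$ is excluded.

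Next, for odd $d$ both $k=d+4$ and $k=d+2$ are odd. Lemma \ref{lemma_f1f2_xplusy}(1) gives $f_{1,k}(x,-x)=\frac{-4x^k}{k(k-2)}\neq 0$ for $x\neq0$. Hence $(1:-1)$ is not a projective root of either polynomial, so $\operatorname{in}_{x_2}\neq-\operatorname{in}_{x_1}$ and $\operatorname{in}_{x_4}\neq-\operatorname{in}_{x_3}$. These are exactly the two required nonvanishing statements, so every branch with this valuation vector has tropicalization $0\odot x^d\oplus1\odot x^{d+1}\oplus2\odot x^{d+2}\oplus1\odot x^{d+3}\oplus0\odot x^{d+4}$.

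Finally, such branches exist. The polynomial $f_{1,d+4}/(x_1-x_2)^3$ has degree $d+1\ge 1$, so it has projective roots other than $(1:1)$; likewise for $f_{1,d+2}$. Combining these roots via $x_4=1/(x_1x_2x_3)$ gives solutions, and Lemma \ref{lemma_1dim_finite_roots} then yields the transversality needed for the converse part of Theorem \ref{theorem_restriction_on_faces}. The main obstacle is the first step: justifying that the relevant initial forms are those of the divided equations, so that the degenerate root $(1:1)$ cannot appear among the initial coefficients. I would argue this directly through the face restriction of the divided polynomial, whose Newton polytopes are $\Delta_1$ and $\Delta_2$.
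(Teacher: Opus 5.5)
Your proposal is correct and follows essentially the same route as the paper. Lemma \ref{lemma_f1f2_xplusy}(1), applied to the odd exponents $k=d+4$ and $k=d+2$, rules out $\operatorname{in}_{x_2}=-\operatorname{in}_{x_1}$ and $\operatorname{in}_{x_4}=-\operatorname{in}_{x_3}$, which gives $\operatorname{val}_{a_3}=\operatorname{val}_{a_1}=1$. Existence of the branch then comes from Lemma \ref{lemma_1dim_finite_roots} and the converse part of Theorem \ref{theorem_restriction_on_faces}.

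Your extra care about the root $(1:1)$, using the divided face restrictions and Lemma \ref{lemma_f1f2_xplusy}(4), only matters for the existence step. For the valuation computation it is not needed, since $\operatorname{in}_{x_1}=\operatorname{in}_{x_2}$ would not make $\operatorname{in}_{x_1}+\operatorname{in}_{x_2}$ vanish anyway.
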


    \begin{proof}
        Applying Lemma \ref{lemma_f1f2_xplusy} in the case of odd $d$, we obtain that
        the system \eqref{equation_1dimensional_zero} has only solutions satisfying $x_1 + x_2 \neq 0$ and  $x_3 + x_4 \neq 0$. Moreover, the same lemma implies that the initial-form system has at least one solution such that $x_1\neq x_2$ and $x_3 \neq x_4$. Therefore, $\operatorname{val}_{x_1+x_2} = 1$ and $\operatorname{val}_{x_3 + x_4} = -1$. Hence, $\operatorname{val}_{a_3} = \operatorname{val}_{(x_1+x_2+x_3+x_4)} =  \operatorname{val}_{(x_1+x_2)} = 1$. Since $\operatorname{val}_{x_1x_2(x_3+x_4)} = 1$ and $\operatorname{val}_{x_3x_4(x_1+x_2)} = -1$, we have $\operatorname{val}_{a_1}= \operatorname{val}_{x_1x_2(x_3+x_4) + x_3x_4(x_1+x_2)} = \operatorname{val}_{x_1x_2(x_3+x_4)} = 1$. By Lemma \ref{lemma_1dim_finite_roots}, this solution of the initial-form system satisfies the hypothesis of the converse part of Theorem \ref{theorem_restriction_on_faces}. Hence, the theorem implies the existence of the required branch.
    \end{proof}

    For even $d$, we use the following equivalent forms of equations \eqref{equation_fx1equalfx2_alternative} and \eqref{equation_fx3equalfx4_alternative}: 

    \begin{equation*}
    \begin{split}
         0 = F_1(x_1,x_2,x_3,x_4) = f(x_1) - f(x_2) =\\ =(x_1+x_2)f_{2, d+3}(x_1,x_2) - (x_3 + x_4)f_{1,d+3}(x_1,x_2)+x_3x_4(x_1+x_2)f_{2, d +1}(x_1,x_2)
    \end{split}
    \end{equation*}

    \begin{equation*}
    \begin{split}
        0 = F_2(x_1,x_2,x_3,x_4) = f(x_3) - f(x_4) = \\ = (x_3+x_4)f_{2, d+3}(x_3,x_4) - (x_1 + x_2)f_{1,d+3}(x_3,x_4)+x_1x_2(x_3+x_4)f_{2, d +1}(x_3,x_4) \text{.}
        \end{split}
    \end{equation*}

    \begin{prop}\label{proposition_1diminsional_skew_even}
        For even $d$, the possible cases are as follows.

        \begin{enumerate}
            \item Assume that $\operatorname{val}_{(x_1 + x_2)} < 1$ and $\operatorname{val}_{(x_3 + x_4)} < -1$. Then $x_1 = -x_2$, $x_3 = - x_4$.
            \item Assume that $\operatorname{val}_{(x_1 + x_2)} = 1$ and $\operatorname{val}_{(x_3 + x_4)} < -1$. Then $\operatorname{val}_{x_3 + x_4} = -3$ and $\operatorname{val}_{a_1} = -1$.
            \item Assume that $\operatorname{val}_{(x_1 + x_2)} < 1$ and $\operatorname{val}_{(x_3 + x_4)} = -1$. Then  $\operatorname{val}_{x_1 + x_2} = -1$ and $\operatorname{val}_{a_3} = -1$.
            \item Assume that $\operatorname{val}_{(x_1 + x_2)} = 1$ and $\operatorname{val}_{(x_3 + x_4)} = -1$. Then  $\operatorname{val}_{a_3} = \operatorname{val}_{a_1} = 1$.
            \item Branches corresponding to all four cases occur. Case 1 corresponds to $C_1$, while the other three cases correspond to $C_2$, where $C_1$ and $C_2$ are the topological equivalence classes defined in Proposition \ref{proposition_topological_classes}.
        \end{enumerate}
    \end{prop}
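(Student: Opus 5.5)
The plan is to work on the branch with valuation vector $(1,1,-1,-1)$ and write $x_i = \operatorname{in}_{x_i}t^{\operatorname{val}_{x_i}}+\ldots$. We use the even-$d$ forms $F_1, F_2$, in which the factors $(x_1+x_2)$ and $(x_3+x_4)$ appear explicitly. By Lemma \ref{lemma_f1f2_xplusy}, for even $d$ the number $d+4$ is even, so $f_{1,d+4}(x,-x)=0$. Hence the initial system \eqref{equation_1dimensional_zero} now admits solutions with $\operatorname{in}_{x_1}+\operatorname{in}_{x_2}=0$ and with $\operatorname{in}_{x_3}+\operatorname{in}_{x_4}=0$. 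This is exactly what produces the four cases. Set $s=x_1+x_2$ and $u=x_3+x_4$, and compare valuations of the three terms of $F_1$ and of $F_2$ in each case. Throughout, use $f_{1,d+3}(x,-x)=\frac{-4x^{d+3}}{(d+3)(d+1)}\neq0$ (part 1 of the lemma, since $d+3$ is odd) and $f_{2,k-1}(x,-x)=\frac{-2x^{k-1}}{k-1}\neq0$ (part 3).

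\textbf{Case 4.} Here $\operatorname{val}_s=1$ and $\operatorname{val}_u=-1$. The argument of Proposition \ref{proposition_1diminsional_skew_odd} applies verbatim: $a_3\sim s$ and $a_1\sim x_1x_2u$ have valuation $1$.

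\textbf{Case 2.} Here $\operatorname{in}_{x_3}=-\operatorname{in}_{x_4}$ and $\operatorname{val}_s=1$. In $F_2$ the terms have valuations $\operatorname{val}_u+(-(d+2))$, then $1+(-(d+3))=-d-2$ for $s f_{1,d+3}(x_3,x_4)$, then $2+\operatorname{val}_u-d$. The middle term is nonzero by part 1 of the lemma. It must be cancelled by the largest of the others, so $\operatorname{val}_u+2-d=-d-2$, i.e.\ $\operatorname{val}_u=-4$?? This needs care: the weight of $x_3^{k}$ is $-k$, so check the bookkeeping. I expect the balancing to force $\operatorname{val}_u=-3$, as stated, and then $F_1$ must be consistent with it. The statement $\operatorname{val}_{a_1}=-1$ then follows by expanding $a_1=x_1x_2u+x_3x_4 s$. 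The two pieces have valuations $2-3=-1$ and $-2+1=-1$, so one must check that the initial coefficients do not cancel. The ratio $\operatorname{in}_u/\operatorname{in}_s$ is determined by the balancing equation from $F_2$, and I would substitute it and check nonvanishing directly.

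\textbf{Case 3.} This is symmetric to Case 2 under $\sigma=(13)(24)$ combined with inversion of valuations. The balancing in $F_1$ now involves $x_3x_4 s f_{2,d+1}$ against $u f_{1,d+3}(x_1,x_2)$, giving $\operatorname{val}_s=-1$. Since $\operatorname{val}_u=-1$ and $\operatorname{val}_s=-1$, we get $a_3=s+u$. The claim $\operatorname{val}_{a_3}=-1$ again requires checking that $\operatorname{in}_s+\operatorname{in}_u\neq0$ using the explicit ratio.

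\textbf{Case 1.} Both $s$ and $u$ have lower valuation than generic. Then in $F_1$ the term $u f_{1,d+3}(x_1,x_2)$ can only be balanced by terms proportional to $s$, and symmetrically for $F_2$. I would show that the two linear relations
$$s\,f_{2,d+3}(x_1,x_2)+x_3x_4 s\,f_{2,d+1}(x_1,x_2)=u\,f_{1,d+3}(x_1,x_2)$$
and its analogue for $F_2$ form, to leading order, a linear system in $(s,u)$ with nonzero determinant. By iterating the valuation drop, this forces $s=u=0$ identically, i.e.\ $x_1=-x_2$ and $x_3=-x_4$. More cleanly, the locus $\{s=u=0\}$ is an irreducible component of $C^*$ (the $C_1$ stratum from Proposition \ref{proposition_description_of_C1}, consisting of polynomials of the form $g(x^2)$). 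A branch whose initial point lies on it, and which is tangent to infinite order, must lie in it, since $C^*$ is smooth there by Proposition \ref{proposition_topological_classes}.

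\textbf{Existence and class assignment (part 5).} Case 1 occurs by restricting to the $g(x^2)$ locus, which is a curve with branches of this valuation. Cases 2 to 4 are obtained from the converse part of Theorem \ref{theorem_restriction_on_faces}. For Case 4, take initial solutions with $\operatorname{in}_s,\operatorname{in}_u\neq0$ and $x_1\ne x_2$, $x_3\neq x_4$; Lemma \ref{lemma_1dim_finite_roots} applies. For Cases 2 and 3, apply the theorem to a modified coordinate system, using $s$ or $u$ as a coordinate and taking the Newton polytopes of $F_1,F_2$ in it. Any branch not in Case 1 is not contained in $\{s=u=0\}=C_1$, hence lies in $C_2$.

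The main obstacle is Case 1 and the existence claims for Cases 2 and 3. These require controlling higher-order terms, or re-running the Newton polytope analysis in the coordinates $(x_1,s,x_3,u)$. I would try the reparametrization approach first.
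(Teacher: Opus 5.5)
Case 4 and your assignment of branches to $C_1$/$C_2$ are fine. Comparing the valuations of the three terms of $F_1$ and $F_2$ is exactly the paper's strategy. However, in Cases 2 and 3 the execution fails precisely where the content lies.

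\textbf{Case 2.} Here $f_{2,d+3}$ and $f_{2,d+1}$ have degrees $d+3$ and $d+1$. So the terms of $F_2$ have valuations $\operatorname{val}_u-(d+3)$, $-d-2$ and $\operatorname{val}_u-d+1$. The first is $4$ below the third, so the middle term must cancel against the third, giving $\operatorname{val}_u=-3$. Your $-4$ comes from using the wrong degrees.

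\textbf{Case 3.} The ``symmetry'' $(13)(24)$ with inverted valuations is not a symmetry of branches, and it leads you to the wrong partner. The term $x_3x_4\,s\,f_{2,d+1}(x_1,x_2)$ has valuation $\operatorname{val}_s+d-1$. Balancing it against $u f_{1,d+3}(x_1,x_2)$ (valuation $d+2$) would force $\operatorname{val}_s=3$, contradicting $\operatorname{val}_s<1$. The dominant partner is $s f_{2,d+3}(x_1,x_2)$, which gives $\operatorname{val}_s=-1$.

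\textbf{Non-cancellation.} More importantly, $\operatorname{val}_{a_1}=-1$ and $\operatorname{val}_{a_3}=-1$ \emph{are} the non-cancellation statements you postpone, so as written they are unproved. Equate the initial forms of the two cancelling terms, using $f_{1,d+3}(x,-x)=\frac{-4x^{d+3}}{(d+3)(d+1)}$, $f_{2,k-1}(x,-x)=\frac{-2x^{k-1}}{k-1}$ and $\operatorname{in}_{x_3x_4}=-\operatorname{in}_{x_3}^2$. This yields $\operatorname{in}_{x_1x_2u}=-\frac{2}{d+3}\operatorname{in}_{x_3x_4s}$ in Case 2 and $\operatorname{in}_s=\frac{2}{d+1}\operatorname{in}_u$ in Case 3. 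Neither ratio is $-1$ for $d\ge3$, and this is how the paper concludes.

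\textbf{Existence (part 5).} The obstacle you single out is not real, and no new coordinates are needed. For even $d$ one has $f_{1,d+4}=(x_1+x_2)(x_1-x_2)^3g_d$ and $f_{1,d+2}=(x_3+x_4)(x_3-x_4)^3g_{d-2}$, with $g_d,g_{d-2}$ of positive degree and nonvanishing at $(1:\pm1)$. So each equation of \eqref{equation_1dimensional_zero} has the simple root $(1:-1)$ and a simple root $(1:r_i)$ with $r_i\neq\pm1$. All four combinations satisfy $x_1\neq x_2$ and $x_3\neq x_4$. Hence Lemma \ref{lemma_1dim_finite_roots} and the converse part of Theorem \ref{theorem_restriction_on_faces} give a branch through each. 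Its case is read off from whether $\operatorname{in}_{x_1}+\operatorname{in}_{x_2}$ and $\operatorname{in}_{x_3}+\operatorname{in}_{x_4}$ vanish.

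\textbf{Case 1.} Your first argument is sound and in fact exact. $F_1$ and $F_2$ are linear in $(s,u)$. The product of the diagonal coefficients $f_{2,d+3}(x_1,x_2)+x_3x_4f_{2,d+1}(x_1,x_2)$ and $f_{2,d+3}(x_3,x_4)+x_1x_2f_{2,d+1}(x_3,x_4)$ has valuation $(d+3)+(1-d)=4$. Meanwhile $f_{1,d+3}(x_1,x_2)f_{1,d+3}(x_3,x_4)$ has valuation $0$. So the determinant is nonzero and $s=u=0$ identically, with no iteration needed. This is equivalent to the paper's clash between $\operatorname{val}_s=\operatorname{val}_u$ and $\operatorname{val}_s=\operatorname{val}_u+4$.

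Drop the ``more cleanly'' variant. A branch at infinity has no initial point on $C^*$, and smoothness of $C^*$ says nothing at infinity. Existence in Case 1 is also immediate from the explicit branch $x_1=t$, $x_2=-t$, $x_3=t^{-1}$, $x_4=-t^{-1}$, along which $f$ is even.
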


    \begin{proof}

        1. If $\operatorname{val}_{(x_1 + x_2)} < 1$ and $\operatorname{val}_{(x_3 + x_4)} < -1$, then Lemma \ref{lemma_f1f2_xplusy} implies that 
        
        \begin{itemize}
            \item $\operatorname{val}_{f_{2, d+3}(x_1,x_2)} = d+3$;
            \item $\operatorname{val}_{f_{2, d+1}(x_1,x_2)} = d+1$;
            \item $\operatorname{val}_{f_{1, d+3}(x_1,x_2)} = d+3$;
            \item $\operatorname{val}_{f_{2, d+3}(x_3,x_4)}= -(d+3)$;
            \item $\operatorname{val}_{f_{2, d+1}(x_3,x_4)} = -(d+1)$;
            \item $\operatorname{val}_{f_{1, d+3}(x_3,x_4)} = -(d+3)$.
        \end{itemize}
        Assume first that $\operatorname{val}_{x_1+x_2}$ and $\operatorname{val}_{x_3+x_4}$ are finite (i.e., the corresponding sums are nonzero). Then
        \begin{equation*}
            \operatorname{val}_{ (x_1+x_2)f_{2, d+3}(x_1,x_2)} > \operatorname{val}_{x_3x_4(x_1+x_2)f_{2, d +1}(x_1,x_2)}
        \end{equation*} 
        
        and
        \begin{equation*}
            \operatorname{val}_{x_1x_2(x_3+x_4)f_{2, d +1}(x_3,x_4)} > \operatorname{val}_{(x_3+x_4)f_{2, d+3}(x_3,x_4)} \text{.}
        \end{equation*}
         Since $F_1 = F_2 = 0$, cancellation of the leading terms is necessary, which gives

        \begin{equation*}
            \begin{cases}
                \operatorname{val}_{(x_1+x_2)f_{2, d+3}(x_1,x_2)} = \operatorname{val}_{(x_3 + x_4)f_{1,d+3}(x_1,x_2)} \\
                \operatorname{val}_{x_1x_2(x_3+x_4)f_{2, d +1}(x_3,x_4)} = \operatorname{val}_{(x_1 + x_2)f_{1,d+3}(x_3,x_4)}\text{,}
            \end{cases}
        \end{equation*}

        but the system
        \begin{equation*}
            \begin{cases}
                \operatorname{val}_{(x_1+x_2)} + d + 3 = \operatorname{val}_{(x_3+x_4)} + d + 3 \\
                2 +\operatorname{val}_{(x_3+x_4)} - (d + 1) = \operatorname{val}_{(x_1+x_2)} - (d + 3)
            \end{cases}
        \end{equation*}
          
          has no solution. Hence, the assumption that both valuations are finite leads to a contradiction. If one of these valuations is $-\infty$, then the other valuation is also $-\infty$, as can be seen by considering $F_1$ and $F_2$. Therefore, $\operatorname{val}_{(x_1+x_2)} = \operatorname{val}_{(x_3+x_4)} = -\infty$, or equivalently, $x_1 + x_2 = x_3 + x_4 = 0$. 

        2. The condition $\operatorname{val}_{(x_3+x_4)} < -1$ is equivalent to $\operatorname{in}_{x_3} = -\operatorname{in}_{x_4}$. Since $\operatorname{val}_{(x_1 + x_2)} = 1$, Lemma \ref{lemma_f1f2_xplusy} implies that the following equations are satisfied:

        \begin{itemize}
            \item  $\operatorname{val}_{(x_1 + x_2)f_{1,d+3}(x_3,x_4)} = -d-2$; 
            \item $\operatorname{val}_{f_{2, d+3}(x_3,x_4)} = -d - 3$;
            \item $\operatorname{val}_{x_1x_2f_{2, d +1}(x_3,x_4)} = -d + 1$.
        \end{itemize}

         Since $F_2 = 0$, we have
        \begin{equation*}
            -(d+2) = \operatorname{val}_{(x_1 + x_2)f_{1,d+3}(x_3,x_4)} = \operatorname{val}_{(x_3+x_4)} + \operatorname{val}_{f_{2, d+3}(x_3,x_4)+x_1x_2f_{2, d +1}(x_3,x_4)} = \operatorname{val}_{(x_3+x_4)} - (d - 1) \text{.}
        \end{equation*}
        
        Therefore, $\operatorname{val}_{(x_3+x_4)} = -3$. Since
         \begin{equation*}
            \operatorname{val}_{(x_1 + x_2)f_{1,d+3}(x_3,x_4)}  = \operatorname{val}_{(x_3+x_4)x_1x_2f_{2, d +1}(x_3,x_4)} > \operatorname{val}_{(x_3+x_4)f_{2, d+3}(x_3,x_4)}
         \end{equation*}
          and $F_2=0$, the initial forms of the two leading terms must cancel; hence the equality $\operatorname{in}_{(x_3+x_4)x_1x_2f_{2, d +1}(x_3,x_4)} = \operatorname{in}_{(x_1 + x_2)f_{1,d+3}(x_3,x_4)}$ holds. By Lemma \ref{lemma_f1f2_xplusy}:
        
        \begin{itemize}
            \item $\operatorname{in}_{f_{2, d +1}(x_3,x_4)} = f_{2,d +1}(\operatorname{in}_{x_3}, -\operatorname{in}_{x_3}) = \frac{-2\operatorname{in}_{x_3}^{d+1}}{d+1}$; \\
            \item $\operatorname{in}_{f_{1, d + 3}(x_3,x_4)} = f_{1, d +3}(\operatorname{in}_{x_3}, -\operatorname{in}_{x_3}) = \frac{-4\operatorname{in}_{x_3}^{d+3}}{(d+3)(d+1)}=\operatorname{in}_{x_3x_4}\cdot\frac{4\operatorname{in}_{x_3}^{d+1}}{(d+3)(d+1)}$ \text{.}
        \end{itemize}

        Hence,

        \begin{equation*}
            \operatorname{in}_{(x_3+x_4)x_1x_2} = \frac{-2}{(d+3)}\operatorname{in}_{(x_1+x_2)x_3x_4} \text{,}
        \end{equation*}

        and $\operatorname{in}_{(x_3+x_4)x_1x_2} \neq -\operatorname{in}_{(x_1+x_2)x_3x_4}$ for $d \ge 3$.
        
        3. The condition $\operatorname{val}_{(x_1 + x_2)} < 1$ is equivalent to $\operatorname{in}_{x_1} = -\operatorname{in}_{x_2}$. Since $\operatorname{val}_{x_3 + x_4} = -1$, Lemma \ref{lemma_f1f2_xplusy} implies that the following equations are satisfied:

        \begin{itemize}
            \item  $\operatorname{val}_{(x_3+x_4)f_{1, d+3}(x_1,x_2)} = d+2$;
            \item $\operatorname{val}_{f_{2, d+3}(x_1,x_2)} = d+3$;
            \item $\operatorname{val}_{x_3x_4f_{2, d+1}(x_1,x_2)} = d-1$.
        \end{itemize}

        Since $F_1 = 0$, we have
           \begin{equation*}
            d+2 = \operatorname{val}_{(x_3 + x_4)f_{1,d+3}(x_1,x_2)} = \operatorname{val}_{x_1 + x_2} + \operatorname{val}_{f_{2, d+3}(x_1,x_2)+x_3x_4f_{2, d+1}(x_1,x_2)} = \operatorname{val}_{(x_1+x_2)} + d + 3\text{.}
            \end{equation*}
            Therefore, $\operatorname{val}_{(x_1 + x_2)} = -1$. Since
            \begin{equation*}
                \operatorname{val}_{(x_3+x_4)f_{1, d+3}(x_1,x_2)} = \operatorname{val}_{(x_1 + x_2)f_{2, d+3}(x_1,x_2)} > \operatorname{val}_{x_3x_4f_{2, d+1}(x_1,x_2)}
            \end{equation*}

            and $F_1=0$, the initial forms of the two leading terms must cancel; hence the equality $\operatorname{in}_{(x_3+x_4)f_{1, d+3}(x_1,x_2)} = \operatorname{in}_{(x_1 + x_2)f_{2, d+3}(x_1,x_2)}$ holds. By Lemma \ref{lemma_f1f2_xplusy}:

            \begin{itemize}
                \item $\operatorname{in}_{f_{1, d+3}(x_1,x_2)} = f_{1, d+3}(\operatorname{in}_{x_1}, -\operatorname{in}_{x_1}) = \frac{-4\operatorname{in}_{x_1}^{d+3}}{(d+1)(d+3)}$;
                \item  $\operatorname{in}_{f_{2, d+3}(x_1,x_2)} = f_{2, d+3}(\operatorname{in}_{x_1}, -\operatorname{in}_{x_1}) = \frac{-2\operatorname{in}_{x_1}^{d+3}}{(d+3)}$.
            \end{itemize}

        Hence,

            \begin{equation*}
                \operatorname{in}_{(x_1+x_2)} = \frac{2}{d+1}\operatorname{in}_{(x_3+x_4)}
            \end{equation*}
            
        and $\operatorname{in}_{(x_1+x_2)} \neq -\operatorname{in}_{(x_3+x_4)}$ for $d \ge 3$.

        4. Since $\operatorname{val}_{(x_1+x_2)} = 1$ and $\operatorname{val}_{(x_3 + x_4)} = -1$, we have $\operatorname{val}_{a_3} = \operatorname{val}_{(x_1+x_2+x_3+x_4)} =  \operatorname{val}_{(x_1+x_2)} = 1$. Equalities $\operatorname{val}_{x_1x_2(x_3+x_4)} = 1$ and $\operatorname{val}_{x_3x_4(x_1+x_2)} = -1$ are satisfied. Therefore, $\operatorname{val}_{a_1} = \operatorname{val}_{x_1x_2(x_3+x_4) + x_3x_4(x_1+x_2)} = \operatorname{val}_{x_1x_2(x_3+x_4)} = 1$.

        5. Lemma \ref{lemma_f1f2_xplusy} implies that, for even $d \ge 4$:
        \begin{itemize}
            \item $f_{1,d+4}(x_1,x_2) = (x_1 + x_2)f_{2, d+3}(x_1,x_2) = (x_1+x_2)(x_1-x_2)^3g_d(x_1,x_2)$, for some homogeneous polynomial $g_d$ of degree $d \geq 4$ satisfying $g_d(1,\pm1) \ne 0$.
            \item $f_{1,d+2}(x_3,x_4) = (x_3 + x_4)f_{2, d+1}(x_3,x_4) = (x_3 + x_4)(x_3-x_4)^3g_{d-2}(x_3,x_4)$, for some homogeneous polynomial $g_{d-2}$ of degree $d - 2 \geq 2$ satisfying $g_{d-2}(1,\pm1) \ne 0$.
        \end{itemize} 

        Therefore, $f_{1,d+4}(x_1,x_2)$ and $ f_{1,d+2}(x_3,x_4)$ have projective roots $(1:r_1),(1:r_2) \in \CC^*\PP^1_{(1,1)} \setminus \{(1:-1),(1:1)\}$ such that 
        \begin{itemize}
            \item $f_{1,d+4}(x,r_1x) = 0$,
            \item $f_{1, d+2}(x,r_2x) = 0$,
        \end{itemize}
        for all $x \in \CC^*$. Since $r_1$ and $r_2$ give solutions to the equations in system \eqref{equation_1dimensional_zero} and neither equals $1$, they  give solutions to the equations in the initial-form system.
          
          Then:

        \begin{itemize}
            \item Case 1 corresponds to projective roots $(1:-1)$, $(1:-1)$, and the corresponding branch occurs only for $C_1$ since $\operatorname{val}_{a_3} = \operatorname{val}_{a_1} = -\infty$. Moreover, for each branch corresponding to $C_1$, the property $\operatorname{val}_{a_3} = \operatorname{val}_{a_1} = -\infty$ holds.
            \item Case 2 corresponds to projective roots $(1:r_1)$ and $(1:-1)$.
            \item Case 3 corresponds to projective roots $(1:-1)$ and $(1:r_2)$.
            \item Case 4 corresponds to projective roots $(1:r_1$) and $(1:r_2)$.
        \end{itemize}

        Cases 2--4 correspond to $C_2$ since $\operatorname{val}_{a_3} \neq -\infty$ and $\operatorname{val}_{a_1}\neq -\infty$. Since for all four of these cases we showed the existence of projective roots of the initial-form system, Lemma \ref{lemma_1dim_finite_roots} implies that each of them satisfies the hypothesis of the converse part of Theorem \ref{theorem_restriction_on_faces}. Theorem \ref{theorem_restriction_on_faces} implies that each of these branches occurs.
    \end{proof}

    Combining Proposition \ref{proposition_1diminsional_skew_odd}  with all four cases of Proposition \ref{proposition_1diminsional_skew_even} and using Proposition \ref{proposition_compatible_faces_description} to exclude other compatible faces, we obtain the following complete list of tropicalizations.
    \begin{prop}\label{prop_1dim_skew}
     The distinct tropicalizations of branches at infinity of $C^*$ such that \\ $(\operatorname{val}_{x_1}, \operatorname{val}_{x_2}, \operatorname{val}_{x_3}, \operatorname{val}_{x_4}) = (1, 1, -1, -1)$ are precisely the following:

    \begin{itemize}
        \item $0\odot x^d \oplus 1 \odot x^{d+1} \oplus 2 \odot x^{d+2} \oplus 1 \odot x^{d+3} \oplus 0 \odot x^{d+4}$ for odd $d$;
        \item $0 \odot x^d \oplus (-\infty) \odot x^{d+1} \oplus 2 \odot x^{d+2} \oplus (-\infty) \odot x^{d+3} \oplus 0 \odot x^{d+4}$ for even $d$ and polynomials in the class $C_1$ defined in Proposition \ref{proposition_topological_classes};
        \item $0\odot x^d \oplus 1 \odot x^{d+1} \oplus 2 \odot x^{d+2} \oplus 1 \odot x^{d+3} \oplus 0 \odot x^{d+4}$, $0\odot x^d \oplus 1 \odot x^{d+1} \oplus 2 \odot x^{d+2} \oplus (-1) \odot x^{d+3} \oplus 0 \odot x^{d+4}$ or $0\odot x^d \oplus (-1) \odot x^{d+1} \oplus 2 \odot x^{d+2} \oplus 1 \odot x^{d+3} \oplus 0 \odot x^{d+4}$ for even $d$ and polynomials in the class $C_2$ defined in Proposition \ref{proposition_topological_classes}.
    \end{itemize} 
    
    \end{prop}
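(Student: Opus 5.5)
The proof will be an assembly of results already established, organized around the valuation vector $(1,1,-1,-1)$. First, by Theorem \ref{theorem_restriction_on_faces}, the valuation vector of any branch with $(\operatorname{val}_{x_1},\operatorname{val}_{x_2},\operatorname{val}_{x_3},\operatorname{val}_{x_4})=(1,1,-1,-1)$ exposes compatible faces of $\Delta_1,\Delta_2$. By Proposition \ref{proposition_compatible_faces_description}, the only such pair is the skew pair $[v_1,v_5]$, $\overline{\sigma}([v_3,v_7])$. Hence the initial coefficients satisfy system \eqref{equation_1dimensional_zero}, and no other initial-form system needs to be considered.

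Next, the valuations of $a_0,a_2,a_4$ are fixed independently of the branch. We have $\operatorname{val}_{a_0}=\operatorname{val}_{a_4}=0$ by the normalization. Also $\operatorname{val}_{a_2}=2$, because among the terms of $a_2$ only $x_1x_2$ has valuation $2$; the others have valuation $\le 0$. So the tropicalization is determined by the pair $(\operatorname{val}_{a_3},\operatorname{val}_{a_1})$. For this pair, $a_3=-(x_1+x_2+x_3+x_4)/(d+3)$ and $a_1$ is proportional to $x_1x_2(x_3+x_4)+x_3x_4(x_1+x_2)$. Since $\operatorname{val}_{x_3x_4}=-2$ and $\operatorname{val}_{x_1x_2}=2$, both valuations reduce to those of $x_1+x_2$ and $x_3+x_4$ whenever these are not too small.

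For odd $d$, I will simply cite Proposition \ref{proposition_1diminsional_skew_odd}. The key point there is that by Lemma \ref{lemma_f1f2_xplusy}(1) the point $(1:-1)$ is not a root of $f_{1,d+4}$ or $f_{1,d+2}$. Hence the only possibility is $\operatorname{val}_{x_1+x_2}=1$ and $\operatorname{val}_{x_3+x_4}=-1$, which gives the single tropical polynomial, and existence follows from Lemma \ref{lemma_1dim_finite_roots}.

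For even $d$, I will split according to whether $\operatorname{val}_{x_1+x_2}$ equals $1$ and whether $\operatorname{val}_{x_3+x_4}$ equals $-1$. These four cases are exhaustive, since the valuations can only drop below the generic values, and they are exactly the four cases of Proposition \ref{proposition_1diminsional_skew_even}. I will then translate each case into $(\operatorname{val}_{a_3},\operatorname{val}_{a_1})$.

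Case 1 gives $x_1+x_2=x_3+x_4=0$, so $a_3=a_1=0$, and this yields the $C_1$ tropicalization. Case 4 gives $(1,1)$.

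Case 2 gives $\operatorname{val}_{x_3+x_4}=-3$. Then $\operatorname{val}_{a_3}=1$, and $a_1$ has competing terms of valuations $2-3=-1$ and $-2+1=-1$. Their initial coefficients do not cancel because of the ratio $-2/(d+3)\neq -1$ computed in the proposition, so $\operatorname{val}_{a_1}=-1$.

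Case 3 gives $\operatorname{val}_{x_1+x_2}=-1$. Here $a_3$ has the two terms $x_1+x_2$ and $x_3+x_4$, both of valuation $-1$, with ratio $2/(d+1)\neq -1$, so $\operatorname{val}_{a_3}=-1$. Also $\operatorname{val}_{a_1}=1$, coming from $x_1x_2(x_3+x_4)$.

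The correct assignment of the pair to the coefficients of $x^{d+1}$ and $x^{d+3}$ needs care. Case 2 produces $1\odot x^{d+3}$ and $(-1)\odot x^{d+1}$, while Case 3 produces the reverse. Together these account for the two asymmetric polynomials in the list.

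Part 5 of Proposition \ref{proposition_1diminsional_skew_even} supplies the existence of all four cases. It also supplies the class assignment: Case 1 corresponds to $C_1$ (the locus $f(x)=g(x^2)$ of Proposition \ref{proposition_topological_classes}, where $a_3=a_1\equiv0$), and the other cases to $C_2$.

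The main obstacle is the claimed exhaustiveness in the word ``precisely''. One must rule out a branch of $C_2$ for which $x_1+x_2=0$ identically while $x_3+x_4\neq0$. Considering $F_1$ and $F_2$ shows that both sums then vanish, which forces the branch into the $C_1$ locus. One must also check that no branch of $C_1$ yields finite values of $\operatorname{val}_{a_1}$ or $\operatorname{val}_{a_3}$; this is immediate since $a_1=a_3\equiv0$ on $C_1$. The non-cancellation ratios above handle the remaining degeneracies, provided $d\ge3$.
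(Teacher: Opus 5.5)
Your proposal is correct and follows essentially the same route as the paper, whose proof simply combines Proposition~\ref{proposition_1diminsional_skew_odd}, the four cases of Proposition~\ref{proposition_1diminsional_skew_even}, and Proposition~\ref{proposition_compatible_faces_description} (to exclude other compatible faces). You additionally spell out two things the paper handles inside those propositions: how the valuations of $x_1+x_2$ and $x_3+x_4$ translate into $(\operatorname{val}_{a_3},\operatorname{val}_{a_1})$ via the non-cancellation ratios $-2/(d+3)$ and $2/(d+1)$, and the observation that if one of these sums vanishes identically, then the other does too.
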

\end{subsubsection}

\begin{subsubsection}{One-dimensional collinear faces}
    Here we describe the tropicalization of the curve $C^*$ at the points at infinity associated with compatible collinear faces of the Newton polytopes of equations defining $C^*$.

\begin{prop}\label{prop_1dim_coll}
The smooth compactification of $C^*$ has no points at infinity associated with one-dimensional collinear compatible pairs of faces.
    
\end{prop}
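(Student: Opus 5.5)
By the symmetries listed in Proposition \ref{proposition_compatible_faces_description}, it suffices to treat the single collinear pair $\Gamma_1=[v_1,v_2]\subset\Delta_1$, $\Gamma_2=\overline{\sigma}([v_1,v_2])\subset\Delta_2$, with $\sigma=(13)(24)$. The other three pairs are obtained from it by $\overline{\sigma'}$ with $\sigma'\in\{(12),(34),(12)(34)\}$. These maps preserve the defining system \eqref{system_definition_of_polynomials} up to relabelling the roots.

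First I will determine the cone of outward normals $v$ exposing both edges, that is, the valuation vectors $(\operatorname{val}_{x_1},\operatorname{val}_{x_2},\operatorname{val}_{x_3})$ a branch could have. Then I will write the initial-form system $F_1|_{\Gamma_1}=F_2|_{\Gamma_2}=0$, as required by the first part of Theorem \ref{theorem_restriction_on_faces}.

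The edge $[v_1,v_2]$ has endpoints $(d+1,0,0)$ and $(d,0,1)$. These are the monomials $x_1^{d+1}$ and $x_1^{d}x_3$ of $\frac{f(x_1)-f(x_2)}{(x_1-x_2)^3}$. So the restriction $F_1|_{\Gamma_1}$ is a binomial in $x_1$ and $x_3$, of the form $x_1^{d}(\alpha x_1+\beta x_3)$. Its coefficients can be read off from \eqref{equation_fx1equalfx2}: the leading coefficient of the $x_1$-expansion of the first and second summands. Likewise $F_2|_{\Gamma_2}$ is the image under $\overline{\sigma}$: it is $x_3^{d}(\alpha x_3+\beta x_1)$, again a binomial in $x_3$ and $x_1$.

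The exposing vector $v$ must satisfy $\langle v,e_1\rangle=\langle v,e_3\rangle$ on both edges. This is consistent because the two edges are collinear, both being parallel to $e_3-e_1$. Hence on the initial coefficients we get two conditions, $\alpha\,\mathrm{in}_{x_1}+\beta\,\mathrm{in}_{x_3}=0$ and $\beta\,\mathrm{in}_{x_1}+\alpha\,\mathrm{in}_{x_3}=0$. Nonzero initial coefficients then force $\alpha^2=\beta^2$. So the heart of the proof is computing $\alpha$ and $\beta$ explicitly and checking that $\alpha\neq\pm\beta$ for $d\ge 3$.

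From \eqref{equation_fx1equalfx2}, the coefficient of $x_1^{d+1}$ is $-\frac{(d+2)\cdot 1}{(d+4)(d+3)(d+2)}=-\frac{1}{(d+4)(d+3)}$, taking the $i=d+1$ term. The coefficient of $x_1^{d}x_3$ is $\frac{(d+1)}{(d+3)(d+2)(d+1)}=\frac{1}{(d+3)(d+2)}$. Hence $\alpha/\beta=-\frac{d+2}{d+4}$, which is never $\pm1$. With $\alpha\neq\pm\beta$, the system forces $\mathrm{in}_{x_1}=\mathrm{in}_{x_3}=0$. This contradicts the initial coefficients lying in the torus, so no branch at infinity has such a valuation vector.

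The main obstacle is making sure the restricted forms really are these binomials. This means checking that no other monomials of $\Delta_i$ lie on the exposed edges; the vertex listing of Section \ref{section_newton_polytopes_computation} settles this, since the edge $[v_1,v_2]$ contains no interior lattice points. I also need to handle the possibility that $v$ exposes a larger face of one polytope while exposing the edge of the other. This is excluded by the case analysis of Proposition \ref{proposition_compatible_faces_description}. Together these show that no point at infinity of the smooth compactification of $C^*$ is associated with a collinear pair.
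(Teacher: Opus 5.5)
Your proposal is correct and follows the paper's own proof. Like the paper, you reduce by symmetry to the pair $[v_1,v_2]$, $\overline{\sigma}([v_1,v_2])$, read off the binomial initial forms $-\frac{x_1^{d+1}}{(d+4)(d+3)}+\frac{x_1^{d}x_3}{(d+3)(d+2)}$ and its $\overline{\sigma}$-image, and note that the system $(d+2)\,\mathrm{in}_{x_1}=(d+4)\,\mathrm{in}_{x_3}$, $(d+2)\,\mathrm{in}_{x_3}=(d+4)\,\mathrm{in}_{x_1}$ has no solution with nonzero initial coefficients because $(d+2)^2\neq(d+4)^2$; this is exactly your condition $\alpha\neq\pm\beta$.
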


\begin{proof}
    In this case, Proposition \ref{proposition_compatible_faces_description} does not determine a specific valuation vector $(\operatorname{val}_{x_1}, \operatorname{val}_{x_2}, \operatorname{val}_{x_3}, \operatorname{val}_{x_4})$, but Theorem \ref{theorem_restriction_on_faces} implies that $\operatorname{in}_{x_1}, \operatorname{in}_{x_2}, \operatorname{in}_{x_3}, \operatorname{in}_{x_4}$ provide a solution to the system

    \begin{equation*}
        \begin{cases}
            \frac{f(x_1)-f(x_2)}{(x_1-x_2)^3} |_{\Gamma_1} = 0\text{,} \\
        \frac{f(x_3)-f(x_4)}{(x_3-x_4)^3} |_{\Gamma_2} = 0\text{,}
        \end{cases}
    \end{equation*}

    where $\Gamma_1$ and $\Gamma_2$ are described in Proposition \ref{proposition_compatible_faces_description}. This system is equivalent to 

\begin{equation*}
    \begin{cases}
    \frac{x_1^{d+1}}{(d+4)(d+3)} = \frac{x_1^dx_3}{(d+3)(d+2)} \text{,} \\
    \frac{x_3^{d+1}}{(d+4)(d+3)} = \frac{x_3^dx_1}{(d+3)(d+2)}\text{.} 
    \end{cases}
\end{equation*}

Since $x_1,x_3\neq 0$, this system is equivalent to

\begin{equation*}
    \begin{cases}
    (d+2)x_1=(d+4)x_3 \text{,} \\
    (d+2)x_3 = (d+4)x_1\text{.} 
    \end{cases}
\end{equation*}
Hence, $(d+2)^2=(d+4)^2$, which is impossible for $d \ge 3$. The remaining three collinear pairs are obtained from the one considered above by the symmetries of Proposition \ref{proposition_compatible_faces_description}, and hence are excluded in the same way.
\end{proof}

\end{subsubsection}

\begin{subsubsection}{Intersections with coordinate planes of $\CC^n$}

    In previous subsections we described the tropicalization of the set of polynomials with the support $\{d+4,d+3,d+2,d+1,d\}$ and the passport $[[2,2,1,\ldots,1],[2,2,1,\ldots,1]]$ using the parametrization by nonzero critical values. Our goal was to express the valuation vectors of the branches at infinity in terms of the coefficients of the polynomials. But one type of point at infinity is not captured by this approach: intersections with coordinate planes of $\CC^n$. It happens because a polynomial $a_4x^{d+4}+\ldots+a_0x^{d}$ such that one of $a_1, a_2, a_3$ is $0$ corresponds to a polynomial with four nonzero critical values which is a regular point in terms of critical values. In this section we compute the intersection with coordinate planes. 
    
    \begin{prop}\label{prop_intersection_a1=a2=a3=0}
        Let $f$ be a polynomial such that $f^\prime = x^{d-1}(x-x_1)(x-x_2)(x-x_3)(x-x_4) = x^{d-1}(x^4+1)$ and $f(x_1) = f(x_2)$, $f(x_3)= f(x_4)$. Then
        \begin{itemize}
            \item if $d$ is odd there are no such polynomials;
            \item if $d = 4k+2$, then there are $8$ possible ordered quadruples $(x_1,x_2,x_3,x_4)$;
            \item if $d = 4k$,  then there are $24$ possible ordered quadruples $(x_1,x_2,x_3,x_4)$.
        \end{itemize}
    \end{prop}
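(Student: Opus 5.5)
The plan is to compute everything explicitly. The hypothesis $f'(x)=x^{d-1}(x^4+1)$ pins down both $f$ and the multiset $\{x_1,x_2,x_3,x_4\}$. The four critical points are the roots of $x^4=-1$, namely $\zeta,\zeta^3,\zeta^5,\zeta^7$ with $\zeta=e^{i\pi/4}$. Integrating (with the normalization $f\in\CC^A$, so no constant term), we get
\[
f(x)=\frac{x^{d+4}}{d+4}+\frac{x^d}{d}.
\]

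The key step is to evaluate $f$ at a root $r$ of $x^4=-1$. Using $r^4=-1$,
\[
f(r)=r^d\left(\frac{r^4}{d+4}+\frac1d\right)=r^d\left(\frac1d-\frac{1}{d+4}\right)=\frac{4\,r^d}{d(d+4)}.
\]
The prefactor is a nonzero constant. Hence for two roots $r\neq s$ we have $f(r)=f(s)$ if and only if $r^d=s^d$, i.e. $(s/r)^d=1$. The ratio of two distinct roots is $\zeta^{2m}=i^m$ with $m\in\{1,2,3\}$, so the condition becomes $i^{md}=1$, i.e. $4\mid md$.

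The problem is therefore to count the ways to split the four roots into an ordered pair of ordered pairs $(x_1,x_2),(x_3,x_4)$ such that each pair satisfies $4\mid md$ for its own $m$. I would treat the three residues of $d$ separately.

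\emph{Case $d$ odd.} Here $4\mid md$ forces $4\mid m$, which is impossible for $m\le 3$. So no two distinct critical values coincide, and no such $f$ exists.

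\emph{Case $d=4k+2$.} Here $md\equiv 2m \pmod 4$, so only $m=2$ works, i.e. the pair must consist of opposite roots. The only admissible partition is $\{\zeta,\zeta^5\},\{\zeta^3,\zeta^7\}$.

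\emph{Case $d=4k$.} All the values $r^d$ coincide, so all $3$ partitions of the four roots into two pairs are admissible.

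Finally, each unordered partition yields $2\cdot2\cdot2=8$ ordered quadruples: $2$ choices of which pair is $\{x_1,x_2\}$, and $2$ orderings inside each pair. This gives $8$ quadruples for $d=4k+2$ and $3\cdot 8=24$ for $d=4k$.

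I expect no real obstacle. The only point needing care is the bookkeeping of what "ordered quadruple" counts, which I would make explicit as above. It is also worth remarking that for $d=4k$ all four critical values coincide, so the statement is counting quadruples satisfying the two equalities, not enforcing $f(x_1)\neq f(x_3)$.
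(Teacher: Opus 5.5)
Your proof is correct and follows essentially the same route as the paper: both use $x_i^4=-1$ to reduce $f(x_i)$ to a fixed nonzero constant times a power of $x_i$ (the paper writes it as $b\,x_i^m$ with $m\equiv d \pmod 4$, you write it as $\frac{4}{d(d+4)}x_i^d$), and then split into the cases $d$ odd, $d\equiv 2 \pmod 4$ and $d\equiv 0 \pmod 4$. Your count of $3\cdot 8=24$ via pair-partitions agrees with the paper's observation that every one of the $4!$ orderings of the roots is a solution when $4\mid d$.
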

    \begin{proof}
        $x_i$ are equal to $a(\pm1\pm i)$ for some positive real $a$. Since $x_i^4 = -a^4$, $f(x_i) = bx^m$, where $b$ is a nonzero value and $m \equiv d \operatorname{(mod 4)}$. For odd $d$ the values $f(x_i)$ are four distinct values. For $d = 4k$ all $f(x_i)$ coincide, then any bijection between $x_i$ and $\pm1\pm i$ provides a solution. For $d = 4k + 2$, we have $f(x_i) = bx_i^2$ and $f(x_i) = f(x_j)$ only if $x_i = -x_j$, which provides $8$ solutions. In the even case, $8$ solutions are such that $x_1 + x_2 = 0$ and $x_3+x_4 = 0$ and these $8$ roots correspond to the component $C_1$ and in case $d = 4k$ other $16$ roots correspond to $C_2$. 
    \end{proof}

    \begin{prop}\label{prop_intersection_a1=a3=0}
         Let $f$ be a polynomial such that $f^\prime = x^{d-1}(x-x_1)(x-x_2)(x-x_3)(x-x_4) = x^{d-1}(x^4+ax^2+1)$ and $\frac{f(x_1)-f(x_2)}{(x_1-x_2)^3} =\frac{f(x_3)-f(x_4)}{(x_3-x_4)^3} = 0$. Then
          \begin{itemize}
            \item if $d$ is odd there are $8(d+1)$ solutions $(x_1,x_2,x_3,x_4)$ counted with multiplicities;
            \item if $d$ is even, then there are $8(d-2)$ solutions $(x_1,x_2,x_3,x_4)$ counted with multiplicities such that $x_1 + x_2 \neq 0$;
            \item if $d$ is even, then there are $8$ solutions $(x_1,x_2,x_3,x_4)$ each with multiplicity $4$ such that $x_1+x_2 = 0$ and $\frac{f(x_1)-f(x_2)}{(x_1-x_2)^3(x_1+x_2)} = 0$ or $\frac{f(x_3)-f(x_4)}{(x_3-x_4)^3(x_3+x_4)} = 0$. 
            
        \end{itemize}
    \end{prop}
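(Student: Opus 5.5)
The plan is to parametrize everything by one root. Since $f^\prime = x^{d-1}(x^4+ax^2+1)$, the roots of $x^4+ax^2+1$ are $\pm s,\pm s^{-1}$ with $a=-(s^2+s^{-2})$. So every ordered quadruple $(x_1,x_2,x_3,x_4)$ is a permutation of $\{s,-s,s^{-1},-s^{-1}\}$ compatible with $x_1x_2x_3x_4=1$. The conditions $a_3=a_1=0$ say exactly that $f(x)=x^d\bigl(\tfrac{x^4}{d+4}+\tfrac{a x^2}{d+2}+\tfrac1d\bigr)$, so $f(-x)=(-1)^d f(x)$.

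The group of order $8$ generated by $x_1\leftrightarrow x_2$, $x_3\leftrightarrow x_4$ and $(x_1,x_2)\leftrightarrow(x_3,x_4)$ preserves the system. The same symmetries already appear in Proposition \ref{proposition_compatible_faces_description}. I will therefore count solutions up to this group and multiply by $8$, checking that generic orbits are free. This accounts for the factor $8$ in all three items.

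Up to symmetry there are two kinds of pairing. In the first, $\{x_1,x_2\}=\{s,-s\}$ and $\{x_3,x_4\}=\{s^{-1},-s^{-1}\}$, so $x_1+x_2=0$. In the second, $\{x_1,x_2\}=\{s,\pm s^{-1}\}$.

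\emph{Opposite pairing, $x_2=-x_1$.} For odd $d$ the function $f$ is odd, so $f(x_1)-f(-x_1)=2f(x_1)$. The two equations then become $f(s)=0$ and $f(s^{-1})=0$. Written as a Laurent polynomial in $s$, this pair gives a one-variable equation whose roots I count by its degree.

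For even $d$ the function $f$ is even, so the unrefined equations hold identically along this pairing. This is why the third item uses the refined quotients by $(x_1+x_2)$ and $(x_3+x_4)$. I will use Lemma \ref{lemma_f1f2_xplusy} (parts 2 and 3) to evaluate $f_{2,k-1}(x,-x)$ explicitly. This reduces each refined condition to a single equation in $s^2$. The multiplicity $4$ should then come from two sources: the vanishing order of $(x_1+x_2)$ and of $(x_3+x_4)$ along the curve, and the fact that $s$ enters only through $s^2$.

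\emph{Mixed pairing.} In the mixed case I substitute $x_3+x_4=-(x_1+x_2)$ and $x_3x_4=(x_1x_2)^{-1}$ into \eqref{equation_fx1equalfx2_alternative}, divide by $(x_1-x_2)^3$, and set $x_1=s$, $x_2=\pm s^{-1}$. The second equation \eqref{equation_fx3equalfx4_alternative} becomes the same equation under $s\mapsto s^{-1}$ (up to sign), so it is automatically satisfied once the first is. This leaves one Laurent polynomial $P_\pm(s)$. I will count its roots as the span of its exponents, then remove the spurious roots: $s^2=\pm1$, where points collide and which Lemma \ref{lemma_f1f2_xplusy} (parts 4 and 5) controls, and for even $d$ also $x_1+x_2=0$. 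What remains should total $d+1$ orbits for odd $d$ (mixed plus opposite cases) and $d-2$ orbits with $x_1+x_2\neq0$ for even $d$. Multiplying by $8$ then gives the stated numbers.

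The main obstacle is exact bookkeeping of multiplicities. I must determine precisely how many roots of $P_\pm$ sit at the collision points $s^2=\pm1$ and at $x_1=-x_2$, and show that no further accidental coincidences occur. For this I would imitate the resultant computation in the last part of Lemma \ref{lemma_f1f2_xplusy}: substitute $u=s^{d}$, compute $\operatorname{Res}_u$ of $P_\pm$ and $P_\pm^\prime$, and read off which values of $s$ can be multiple roots.
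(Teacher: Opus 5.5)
\textbf{Gap 1: the odd-$d$ opposite pairing is empty.} For odd $d$, with $x_2=-x_1=-s$, the conditions $f(s)=0$ and $f(s^{-1})=0$ are two independent equations in the single unknown $s$. They cannot be merged into one equation whose roots are counted by degree.

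Since $a=-(s^2+s^{-2})$, you get $f(s)=s^d\bigl(\tfrac{2}{d(d+2)}-\tfrac{2s^4}{(d+2)(d+4)}\bigr)$. So the first condition forces $s^4=\tfrac{d+4}{d}$ and the second forces $s^{-4}=\tfrac{d+4}{d}$, which is impossible. This is the paper's pair $dx_1x_2=(d+4)x_3x_4$, $(d+4)x_1x_2=dx_3x_4$.

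Hence for odd $d$ this case contributes nothing, and all $d+1$ orbits come from the mixed pairing. Your expected split of the $d+1$ orbits as ``mixed plus opposite'' would overcount if carried out as written.

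\textbf{Gap 2: the multiplicity $4$ in the third item.} Your explanation for this multiplicity does not hold up. Along the opposite pairing both refined conditions collapse to $x_1x_2=x_3x_4$, i.e.\ $s^4=1$. Its roots are simple in $s$, and $s\mapsto s^2$ is unramified there, so `$s$ enters only through $s^2$' contributes nothing.

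These $8$ quadruples are also degenerate: $\{x_1,x_2\}=\{x_3,x_4\}$, e.g.\ $(1,-1,1,-1)$. So they have nontrivial stabilizers in your group of order $8$, and the factor $8$ there is not an orbit count.

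The multiplicity actually lives in the mixed family. These are exactly the roots $\epsilon s^2=-1$ that you discard for even $d$, where $f_{3,d+4}(x)=-dx^{d+4}+(d+4)x^{d+2}-(d+4)x^2+d$ has a triple root at $x=-1$. The paper uses this: removing the factor $x_1+x_2$ leaves multiplicity $2$ for the refined equation, and it accounts for the reinserted factors $(x_1+x_2)$ and $(x_3+x_4)$ to reach $4$. Your plan needs to tie the third item to that triple root.

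\textbf{The mixed pairing.} This part is fine and is the paper's argument in another variable. The paper gets $x_1x_2=x_3x_4$ from $a_1\propto(x_1+x_2)(x_3x_4-x_1x_2)$ and dehomogenizes to $f_{3,d+4}$. Your $(s-\epsilon s^{-1})^3P_\epsilon(s)$ is a constant multiple of $s^{-(d+4)}f_{3,d+4}(\epsilon s^2)$.

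The count needs only $\deg f_{3,d+4}=d+4$ and the exact vanishing order $3$ at $x=1$ and, for even $d$, at $x=-1$. The paper checks these by computing $f_{3,d+4}^{(j)}(\pm1)$ for $j\le 3$. Since roots are counted with multiplicity, the resultant step you propose is unnecessary.

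Lemma~\ref{lemma_f1f2_xplusy} parts 4--5 concern a single $f_{1,k}$, not the combination $f_{1,d+4}+(x_1+x_2)f_{1,d+3}+x_1x_2f_{1,d+2}$. So you still need that direct derivative check.
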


    \begin{proof}
        Note that, if $x_1+x_2 = -(x_3+x_4)$ then $x_1x_2x_3+x_2x_3x_4 + x_3x_4x_1+x_4x_1x_2=(x_1+x_2)x_3x_4+(x_3+x_4)x_1x_2 = (x_1+x_2)(x_3x_4-x_1x_2)$.
        Consider the case $x_1+x_2 \neq 0$. Then $(x_1+x_2)(x_3x_4-x_1x_2) = 0$ implies that $x_1x_2=x_3x_4$ and consequently
            \begin{equation}\label{equation_reduction_x1x2_equal_x3x4}
            f(x_1)-f(x_2)= f_{1,d+4}(x_1,x_2)+(x_1+x_2)f_{1,d+3}(x_1,x_2)+x_1x_2f_{1,d+2}(x_1,x_2) \text{.}
            \end{equation}
            Let $(1,s_1)$ be a solution of \eqref{equation_reduction_x1x2_equal_x3x4}. Denote $s_2 := \frac{x_4}{x_3}$. Since $x_1x_2x_3x_4 = 1$ and $x_1x_2=x_3x_4$, $x_1^2s_1 = x_3^2s_2 = \pm1$. Moreover, $$x_1+x_2+x_3+x_4 = x_1(1 + s_1) + x_3(1 + s_2)=x_1 \pm \frac{1}{x_1}+x_3 \pm\frac{1}{x_3} = 0\text{.}$$

            Multiplying the last equation by $x_3$, we obtain a quadratic equation with the roots $x_3 = -x_1$ and $x_3 = \mp\frac{1}{x_1}$. In the first case $s_2=s_1$ and in the second case $s_2= \frac{1}{s_1}$. Since \eqref{equation_reduction_x1x2_equal_x3x4} is reciprocal, in both cases $(x_3,x_4)$ is a solution for $f(x_3) = f(x_4)$. Hence, each root $s_1 \neq -1$ of \eqref{equation_reduction_x1x2_equal_x3x4} defines $8$ solutions $$x_1 = \pm \sqrt{\pm \frac{1}{s_1}}, x_2 = x_1s_1, s_2 = s_1^{\pm1}, x_3 = -\frac{x_1(1+s_1)}{(1+s_2)}, x_4 = s_2x_3$$ of the system. Substitute $x_1 = x, x_2 = 1$ in \eqref{equation_reduction_x1x2_equal_x3x4} and multiply by $\frac{d(d+4)}{2}$ to obtain
            \begin{equation*}
                f_{3,d+4}(x) = -dx^{d+4} + (d+4)x^{d+2} - (d+4)x^2 + d = 0 \text{.}
            \end{equation*}

            For odd $d$, $f_{3,d+4}(-1) = -8$.
            For even $d$, we compute the derivatives

            \begin{itemize}
                \item $f_{3, d+4}(-1) = -d + (d+4) - (d+4) + d = 0$;
                \item $f_{3, d+4}^\prime(-1) = d(d+4) - (d+4)(d+2) + 2(d+4) = 0$;
                \item $f_{3, d+4}^{\prime\prime}(-1) = -d(d+3)(d+4) + (d+4)(d+2)(d+1) - 2(d+4) = (d+4)(-d(d+3) + (d+2)(d+1) -2) = 0$;
                \item $f_{3, d+4}^{\prime\prime\prime}(-1) = (d+4)(d+3)(d+2)d - (d+4)(d+2)(d+1)d = 2(d+4)(d+2)d \neq 0$.
            \end{itemize}

            Therefore, for odd $d$ there are $8(d+1)$ solutions with multiplicities and for even $d$ there are $8(d-2)$ solutions.

            In the case $x_1 + x_2 = 0$ for odd $d$, we substitute $x_2 = -x_1$ in $f(x_1) = f(x_2)$ to obtain $-\frac{4x_1^{d+4}}{(d+4)(d+2)} - x_3x_4\frac{4x_1^{d+2}}{(d+2)d} = 0$ by Lemma \ref{lemma_f1f2_xplusy} for odd $d$. Hence, $dx_1x_2 = (d+4)x_3x_4$. Conversely, in the same way we may show that $(d+4)x_1x_2 = dx_3x_4$. Thus, for odd $d$ there are no solutions.
            
            For even $d$, we substitute $x_2 = -x_1$ in $\frac{f(x_1) - f(x_2)}{x_1+x_2} = 0$ to obtain $-\frac{2x_1^{d+3}}{d+3} - \frac{4x_1^{d+3}}{(d+3)(d+1)} - x_3x_4 \frac{2x_1^{d+1}}{d+1}$. Therefore, $x_1x_2 = x_3x_4$ and $f(x_1) = f(x_2)$ becomes equivalent to \eqref{equation_reduction_x1x2_equal_x3x4}. We have
            \begin{itemize}
                \item $x_2 = -x_1$;
                \item $x_4 = -x_3$;
                \item $x_1x_2x_3x_4 = 1$;
                \item $x_1x_2 = x_3x_4$.
            \end{itemize}

            Hence, there are $8$ following solutions $$x_1 = \pm \sqrt{\pm 1}, x_3 = \pm x_1, x_2 = -x_1, x_4 = -x_3 \text{.}$$. If we consider the equations $frac{f(x_1)-f(x_2)}{(x_1-x_2)^3(x_1+x_2)} = 0$ and $\frac{f(x_3)-f(x_4)}{(x_3-x_4)^3(x_3+x_4)} = 0$, the multiplicity of these solutions is $2$, since $(-1)$ is a root of $f_{3,d+4}$ with multiplicity $3$. If we multiply one of them by $(x_1+x_2)$ or $(x_3 + x_4)$ then multiplicity will be increased by $1$. Hence, total multiplicity is $4$. 
    \end{proof}

    \begin{conjecture}\label{conj_intersection_a1=a2=0_or_a3=a2=0=>a1=a2=a3=0}
    \begin{enumerate}
        \item Let $f$ be a polynomial such that $f^\prime = x^{d-1}(x-x_1)(x-x_2)(x-x_3)(x-x_4) = x^{d-1}(x^4+ax+1)$ and $\frac{f(x_1)-f(x_2)}{(x_1-x_2)^3} =\frac{f(x_3)-f(x_4)}{(x_3-x_4)^3} = 0$. Then $a = 0$.
        \item Let $f$ be a polynomial such that $f^\prime = x^{d-1}(x-x_1)(x-x_2)(x-x_3)(x-x_4) = x^{d-1}(x^4+ax^3+1)$ and $f(x_1)-f(x_2) = f(x_3)-f(x_4) = 0$. Then $a = 0$.
    \end{enumerate}
         
    \end{conjecture}

    \begin{remm}
        This conjecture was not proved, but was checked for $d \leq 300$ using SageMath.
    \end{remm}
      
\end{subsubsection}

\begin{subsubsection}{Weighted 1-fan}
    
        We can now describe the weighted 1-fan corresponding to the tropicalizations of $C^*$. Combining Propositions \ref{proposition_topological_classes}, \ref{prop_2dim}, \ref{prop_1dim_skew}, \ref{prop_1dim_coll}, \ref{prop_intersection_a1=a2=a3=0} and \ref{prop_intersection_a1=a3=0} under assumption of Conjecture \ref{conj_intersection_a1=a2=0_or_a3=a2=0=>a1=a2=a3=0} we obtain the following proposition.

        \begin{prop}\label{prop_main}
            Assuming Conjecture \ref{conj_intersection_a1=a2=0_or_a3=a2=0=>a1=a2=a3=0}, the 1-fan of $C^*$ in terms of the coordinates $(a_1,a_2,a_3)$ of polynomials $f(x)= a_4x^{d+4} + a_3x^{d+3} + a_2 x^{d+2} + a_1 x^{d+1} + a_0x^{d}$ is as follows.

            \begin{itemize}
                \item For odd $d$, the fan consists of the following vectors:

                \begin{enumerate}
                    \item $(3,2,1)$ with the weight $4(2d+2)$;
                    \item $(1,2,1)$ with the weight $4(d+1)(d-1)$;
                    \item $(-1,0,-1)$ with the weight $8(d+1)$;
                    \item $(-1,0,0)$ with the weight $4(d+1)(d+3)$;
                    \item $(0, -1, 0)$ with the weight $8(d+1)^2$;
                    \item $(0,0,-1)$ with the weight $4(d+1)(d-1)$.
                \end{enumerate}

                \item For even $d$, the fan corresponding to the class $C_2$ consists of the following vectors:
                     \begin{enumerate}
                    \item $(3, 2, 1)$ with the weight $4(2d+2)$;
                    \item $(1, 2, 1)$ with the weight $4d(d-2)$;
                    \item $(-1, 2, 1)$ with the weight $4d$;
                    \item $(1, 2, -1)$ with the weight $4(d-2)$;

                    \item $(-1, -1, -1)$ with the weight $16$ if $d = 4k$;
                    \item $(-1, 0, -1)$ with the weight $8(d+2)$ if $d = 4k+2$ and $8d$ if $d = 4k$;
                    
                    \item $(-1,0,0)$ with the weight $4d(d+2)$;
                    \item $(0, -1, 0)$ with the weight $8d(d+2)$ if $d = 4k + 2$ and $8(d^2+2d-2)$ if $d = 4k$;
                    \item $(0,0,-1)$ with the weight $4d(d-2)$;

                \end{enumerate}

                \item For even $d$, the fan corresponding to the class $C_1$ consists of the following vectors:
                    \begin{enumerate}
                    \item $(-\infty, 2, -\infty)$ with the weight $4$;
                    \item $(-\infty, -1, -\infty)$ with the weight $8$.
                \end{enumerate}
            \end{itemize}
        \end{prop}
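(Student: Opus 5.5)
The plan is to assemble the fan by listing every point at infinity of the smooth compactification of $C^*$, recording for each its valuation vector $(\operatorname{val}_{a_1},\operatorname{val}_{a_2},\operatorname{val}_{a_3})$, and counting how many points carry each vector. By Proposition \ref{proposition_compatible_faces_description} and Theorem \ref{theorem_restriction_on_faces}, points at infinity in the $x$-coordinates come only from the four two-dimensional pairs and the two skew pairs; Proposition \ref{prop_1dim_coll} rules out the collinear ones. The remaining points at infinity in the $a$-coordinates are those where some $a_i\to 0$. These are handled by Propositions \ref{prop_intersection_a1=a2=a3=0} and \ref{prop_intersection_a1=a3=0}, while Conjecture \ref{conj_intersection_a1=a2=0_or_a3=a2=0=>a1=a2=a3=0} excludes the cases $a_1=a_2=0$ and $a_2=a_3=0$ with the third coefficient nonzero.

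\textbf{Step 1: branches with $x_i\to\infty$.} For the pair with normal $(1,1,1)$, Proposition \ref{prop_2dim} gives the vector $(3,2,1)$. The branches are counted by the $2d+2$ simple projective roots of \eqref{equation_2dim_finite_root_substitute}; this is the degree $2(d+3)$ minus $6$ from Lemma \ref{lemma_f1f2_xplusy}, so I will recheck the count. Each root is multiplied by the $4$ symmetric pairs from $\overline{\sigma_i}$. Next I have to check how the symmetries $\overline{\sigma_i}$ act on the valuation vector in the $a$-coordinates. The pair with $\operatorname{val}_{x_4}=3$ presumably gives $(3,2,1)$ again up to relabelling, or instead the reversed vector $(-1,0,-1)$ after the normalization $a_0=a_4$ constant. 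I must check which, and this decides how the weights of $(3,2,1)$ and $(-1,0,-1)$ split.

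For the skew pairs, Propositions \ref{proposition_1diminsional_skew_odd} and \ref{proposition_1diminsional_skew_even} (summarized in Proposition \ref{prop_1dim_skew}) give $(1,2,1)$, $(-1,2,1)$, $(1,2,-1)$ and, for $C_1$, $(-\infty,2,-\infty)$. Their weights come from counting points in the quotient torus, as in Lemma \ref{lemma_1dim_finite_roots}. There are $d$ (even case $d-2$ after removing $\pm1$) admissible roots $r_1$ of $f_{1,d+4}(1,\cdot)$ other than $1$ and $-1$, and similarly $d-1$ (even: $d-2$) roots $r_2$ of $f_{1,d+2}$. Each pair $(r_1,r_2)$ gives $2$ points, the factor $2$ and the ordering symmetries give the overall factor $4$, and the root $-1$ feeds the cases with valuation $-\infty$ or $\pm1$.

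\textbf{Step 2: branches with $a_i\to0$.} In the coordinates $a_j$, a finite point of $C^*$ with $a_i=0$ contributes the ray $-e_i$, weighted by its intersection multiplicity. Proposition \ref{prop_intersection_a1=a3=0} gives the multiplicities at $a_1=a_3=0$. The points of Proposition \ref{prop_intersection_a1=a2=a3=0} give the direction $(-1,-1,-1)$ when $d=4k$, and the $C_1$ direction $(-\infty,-1,-\infty)$. Single-coordinate intersections $a_i=0$ are obtained by intersecting $C^*$ with the hyperplane and subtracting these degenerate contributions.

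\textbf{Step 3: consistency checks.} I use the balancing condition for each class separately to fix or cross-check the weights of $(-1,0,0)$, $(0,-1,0)$ and $(0,0,-1)$, which are the hardest to count directly. I also divide by the $(\ZZ/4)$-type symmetries, since ordered quadruples overcount unordered critical points.

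\textbf{Main obstacle.} The main obstacle is the exact bookkeeping of weights: the multiplicity of the coordinate-hyperplane intersections and the correct quotient by labeling symmetries. I would first compute the total degree of each $a_i$ restricted to $C^*$ from the Newton polytopes, via a BKK count. The weight on $-e_i$ is then determined by balancing against the already-known positive rays, and the direct counts of Propositions \ref{prop_intersection_a1=a2=a3=0} and \ref{prop_intersection_a1=a3=0} serve as a check.
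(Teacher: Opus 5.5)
Your route is the paper's: collect the rays already computed in Propositions \ref{prop_2dim}, \ref{prop_1dim_skew}, \ref{prop_1dim_coll}, \ref{prop_intersection_a1=a2=a3=0} and \ref{prop_intersection_a1=a3=0}, use the conjecture to discard the remaining double coordinate intersections, and fix the weights on $-e_1,-e_2,-e_3$ by balancing (the paper mentions Kushnirenko--Bernstein only as an alternative). But the statement is nothing except this bookkeeping, and several of your counts would give wrong numbers.

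\emph{The two-dimensional pairs and the ray $(-1,0,-1)$.} There is no compatible pair with $\operatorname{val}_{x_4}=3$. The vector $(-1,-1,-1)$ exposes the vertex $(-(d+1),-(d+1),-(d+1))$ of $S$, which is a vertex of $\Delta_2$ (the monomial $x_4^{d+1}$), so the initial system has no torus solutions. The four two-dimensional pairs are the vectors with $-3$ in one slot and $1$ elsewhere. The symmetries $(12)$, $(34)$, $(13)(24)$ of the system permute them transitively. Since $a_3,a_2,a_1$ are constant multiples of the first, second and third elementary symmetric functions of $x_1,\dots,x_4$, all four pairs give $(3,2,1)$. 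Equation \eqref{equation_2dim_finite_root_substitute} has degree $2(d+4)$, not $2(d+3)$, so each pair contributes $2d+2$ branches, giving weight $4(2d+2)$. The ray $(-1,0,-1)$ does not come from $x$-branches at all; it comes only from $a_1=a_3=0$ (Proposition \ref{prop_intersection_a1=a3=0}):
\begin{itemize}
\item $8(d+1)$ for odd $d$;
\item $8(d-2)+4\cdot 8=8(d+2)$ for $d=4k+2$;
\item $8d$ for $d=4k$. Here you must remove the $16$ points of Proposition \ref{prop_intersection_a1=a2=a3=0} lying in $C_2$. They are among the $8(d-2)$ solutions with $x_1+x_2\neq 0$ but belong to the ray $(-1,-1,-1)$.
\end{itemize}

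\emph{The skew count.} Your count of the roots $r_1$ is wrong. The form $f_{1,d+4}$ has degree $d+4$ and does not vanish at $(1:0)$ or $(0:1)$. It has $(1:1)$ as a root of multiplicity exactly $3$, and all its other roots are simple (Lemma \ref{lemma_f1f2_xplusy}). So there are $d+1$ roots $r_1\neq 1$; none equals $-1$ for odd $d$, and exactly one does for even $d$, leaving $d$. Your counts of $d-1$, resp.\ $d-2$, for $r_2$ are correct. With your numbers you would get $4d(d-1)$, resp.\ $4(d-2)^2$, for $(1,2,1)$ and $4(d-2)$ for $(-1,2,1)$. The correct values are $4(d+1)(d-1)$, $4d(d-2)$ and $4d$. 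Since the $-e_i$ weights are forced by balancing, the error would propagate to all three of them.

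\emph{No quotient by relabelling.} Drop the plan to divide by relabelling symmetries. The stated weights count branches of the curve in the $(x_1,x_2,x_3)$-torus, i.e.\ ordered quadruples of critical points. The factor $4$ over symmetric compatible pairs, the eight ordered solutions per root in Proposition \ref{prop_intersection_a1=a3=0}, and the ordered quadruples in Proposition \ref{prop_intersection_a1=a2=a3=0} are all built into the stated numbers. A uniform quotient keeps the fan balanced, so your Step 3 check would not detect it, but the weights would no longer be the stated ones.

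With these corrections, balancing reproduces exactly the stated weights on $(-1,0,0)$, $(0,-1,0)$, $(0,0,-1)$ for odd $d$ and for $C_2$.
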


        \begin{proof}
            All the vectors and weights except basis vectors $-e_1, -e_2$ and $-e_3$ were considered in propositions above. These vectors correspond to the cases when only one of the coefficients of the polynomial is $0$ and this is the only type of intersections with the coordinate planes which was not discussed above. The weight of such vectors comes from the condition that the fan should be balanced (i.e. its weighted sum is $0$). Alternatively, these cases can be verified using the Kushnirenko-Bernstein theorem.
        \end{proof}
\end{subsubsection}
\end{subsection}
\end{section}

\begin{section}{Main result}

    Combining Propositions \ref{proposition_topological_classes} and \ref{prop_main} under assumption of Conjecture \ref{conj_intersection_a1=a2=0_or_a3=a2=0=>a1=a2=a3=0} we obtain the following theorem.

    \begin{theorr}{(conditional on Conjecture \ref{conj_intersection_a1=a2=0_or_a3=a2=0=>a1=a2=a3=0})}\label{theorem_main_theorem}
    \begin{enumerate}
        \item Let $d \ge 3$ and let $A = \{d, d+1,d+2,d+3,d+4\}$. For $f \in \CC^A$ with two nonzero critical values, each having two critical points of local degree $2$, the topological equivalence classes are as follows:

        \begin{itemize}
            \item One class for odd $d$. Denote this class by $C_0$.
            \item Two classes for even $d$. One of the two classes consists of polynomials $f$ such that $f^\prime(x) = cx^{d-1}(x^2-x_1^2)(x^2-x_2^2)$. Denote this class by $C_1$ and the other class by $C_2$.
        \end{itemize}

        \item The following tropicalizations occur for the unique class $C_0$ when $d$ is odd and for the class $C_2$ when $d$ is even:
            \begin{itemize}
                \item $0 \odot x^d \oplus 3 \odot x^{d+1} \oplus 2 \odot x^{d+2} \oplus 1 \odot x^{d+3}  \oplus 0 \odot x^{d+4}$;
                \item $0 \odot x^d \oplus 1 \odot x^{d+1} \oplus 2 \odot x^{d+2} \oplus 1 \odot x^{d+3}  \oplus 0 \odot x^{d+4}$;
                \item $0 \odot x^d \oplus (-1) \odot x^{d+1} \oplus 0 \odot x^{d+2} \oplus (-1) \odot x^{d+3}  \oplus 0 \odot x^{d+4}$;
                \item $0 \odot x^d \oplus (-1) \odot x^{d+1} \oplus 0 \odot x^{d+2} \oplus0 \odot x^{d+3}  \oplus 0 \odot x^{d+4}$;
                \item $0 \odot x^d \oplus 0 \odot x^{d+1} \oplus (-1) \odot x^{d+2} \oplus0 \odot x^{d+3}  \oplus 0 \odot x^{d+4}$;
                \item $0 \odot x^d \oplus 0 \odot x^{d+1} \oplus 0 \odot x^{d+2} \oplus (-1) \odot x^{d+3}  \oplus 0 \odot x^{d+4}$.
            \end{itemize}

        \item The list of tropicalizations which occur for branches at infinity only for $C_2$ is as follows:
             \begin{itemize}
                 \item  $0 \odot x^d \oplus (-1) \odot x^{d+1} \oplus (-1) \odot x^{d+2} \oplus (-1) \odot x^{d+3}  \oplus 0 \odot x^{d+4}$ if $d = 4k$;
                 \item $0 \odot x^d \oplus (-1) \odot x^{d+1} \oplus 2 \odot x^{d+2} \oplus 1 \odot x^{d+3}  \oplus 0 \odot x^{d+4}$;
                 \item $0 \odot x^d \oplus 1 \odot x^{d+1} \oplus 2 \odot x^{d+2} \oplus (-1) \odot x^{d+3}  \oplus 0 \odot x^{d+4}$.
             \end{itemize}

        \item The list of tropicalizations which occur for branches at infinity of $C_1$ is as follows:
            \begin{itemize}
                \item $0\odot x^d \oplus (-\infty) \odot x^{d+1} \oplus 2 \odot x^{d+2} \oplus(-\infty) \odot x^{d+3} \oplus 0 \odot x^{d+4}$;
                \item $0\odot x^d \oplus (-\infty) \odot x^{d+1} \oplus (-1) \odot x^{d+2} \oplus(-\infty) \odot x^{d+3} \oplus 0 \odot x^{d+4}$.
            \end{itemize}

    \end{enumerate}

    \end{theorr}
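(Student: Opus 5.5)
The theorem is essentially a repackaging of Proposition \ref{proposition_topological_classes} and Proposition \ref{prop_main}, so the plan is to translate the weighted $1$-fan into tropical polynomials. Part 1 is verbatim Proposition \ref{proposition_topological_classes}, so nothing new is needed there. For the remaining parts, I would use the dictionary fixed by the normalization in Section \ref{section_newton_polytopes_computation}. Since $a_0$ and $a_4$ are nonzero constants, every branch at infinity of $C^*$ has $\operatorname{val}_{a_0}=\operatorname{val}_{a_4}=0$. Hence a ray $(\operatorname{val}_{a_1},\operatorname{val}_{a_2},\operatorname{val}_{a_3})$ of the fan in Proposition \ref{prop_main} corresponds exactly to the tropical polynomial $0\odot x^d\oplus \operatorname{val}_{a_1}\odot x^{d+1}\oplus\operatorname{val}_{a_2}\odot x^{d+2}\oplus\operatorname{val}_{a_3}\odot x^{d+3}\oplus 0\odot x^{d+4}$.

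With this dictionary I would go through the rays class by class and sort them. For odd $d$, Proposition \ref{prop_main} lists the rays $(3,2,1)$, $(1,2,1)$, $(-1,0,-1)$, $(-1,0,0)$, $(0,-1,0)$ and $(0,0,-1)$. These six give the list in part 2 for $C_0$. For even $d$ and the class $C_2$, the fan contains those same six rays, which again gives part 2, together with $(-1,2,1)$, $(1,2,-1)$ and, for $d=4k$, $(-1,-1,-1)$. These extra rays give part 3, and each ray carries positive weight. Here $(-1,0,-1)$ has weight $8(d+2)$ or $8d$, $(0,-1,0)$ has weight $8d(d+2)$ or $8(d^2+2d-2)$, $(1,2,1)$ has weight $4d(d-2)>0$ since $d\ge 4$ when $d$ is even, and $(-1,2,1)$, $(1,2,-1)$ have weights $4d$ and $4(d-2)$. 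The genuinely new branches can be traced to their sources: $(-1,2,1)$ and $(1,2,-1)$ come from cases 2 and 3 of Proposition \ref{proposition_1diminsional_skew_even}, and $(-1,-1,-1)$ comes from the $16$ non-$C_1$ roots in Proposition \ref{prop_intersection_a1=a2=a3=0} when $d=4k$. For $C_1$, the two rays $(-\infty,2,-\infty)$ and $(-\infty,-1,-\infty)$ give part 4.

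I would then check the word ``only'' in part 3 against the odd case and against $C_1$. None of the three part-3 vectors appears in the odd-$d$ fan. They also cannot occur for $C_1$, because every $C_1$ branch has $\operatorname{val}_{a_1}=\operatorname{val}_{a_3}=-\infty$ (the remark after Proposition \ref{prop_2dim} and case 1 of Proposition \ref{proposition_1diminsional_skew_even}). Both facts follow from $C_1$ consisting of even polynomials $g(x^2)$ after normalization, by Proposition \ref{proposition_description_of_C1}.

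The main obstacle is not in this assembly but in the completeness of the fan, which the theorem inherits. Two points carry it. First, the rays $-e_1$, $-e_2$, $-e_3$ in Proposition \ref{prop_main} were obtained from the balancing condition, and this is valid only once every other ray is known. That is exactly where Conjecture \ref{conj_intersection_a1=a2=0_or_a3=a2=0=>a1=a2=a3=0} enters: it rules out intersections with the planes $a_1=a_2=0$ and $a_2=a_3=0$ away from the origin. Second, the fan is computed on $C^*$, so one must verify that all of its branches lie in $C$, that is, that the conditions $x_1\ne x_2$, $x_3\ne x_4$ and distinct critical values persist near infinity. I would argue this from the fact that the initial solutions used avoid $(1:1)$, by Lemma \ref{lemma_f1f2_xplusy}. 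Granting these inputs, the theorem follows by reading off the list.
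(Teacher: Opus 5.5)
Your proposal is correct and follows the paper's route: the paper obtains Theorem~\ref{theorem_main_theorem} in one line, by taking part 1 from Proposition~\ref{proposition_topological_classes} and reading parts 2--4 off the weighted fan of Proposition~\ref{prop_main}, using $\operatorname{val}_{a_0}=\operatorname{val}_{a_4}=0$, exactly as you do. Your additional checks are consistent with the paper and spell out what its one-line justification leaves implicit: positivity of the weights, the verification of ``only'' in part 3 via $\operatorname{val}_{a_1}=\operatorname{val}_{a_3}=-\infty$ on $C_1$, and the dependence on the conjecture through the balancing step.
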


    \begin{remm}\label{remm_to_main_theor}
        For the tropicalizations arising from intersections with coordinate hyperplanes, the valuation vectors are determined only up to multiplication by a positive scalar.
    \end{remm}

    By Remark \ref{remm_to_main_theor}, we do not know the tropicalization precisely, but we have enough information to draw the plots of tropical polynomials. Consider two linear functions $l_1(x) = b+ax$ and $l_2(x) = kb+ax$. Since $k\cdot l_1(x) = l_2(kx)$, the plot of $l_1(x)$ may be obtained from the plot of $l_2(x)$ by a rescaling. Hence, two tropical polynomials whose coefficients differ by a common positive scalar have the same incidence relations among the lines in their plots. Below we show the plots of tropicalizations which occur for classes $C_0$ and $C_2$. 
\pagebreak

\begin{figure}[h]
    \centering
    \begin{subfigure}[b]{0.31\textwidth}
        \centering
        \includegraphics[width=\textwidth]{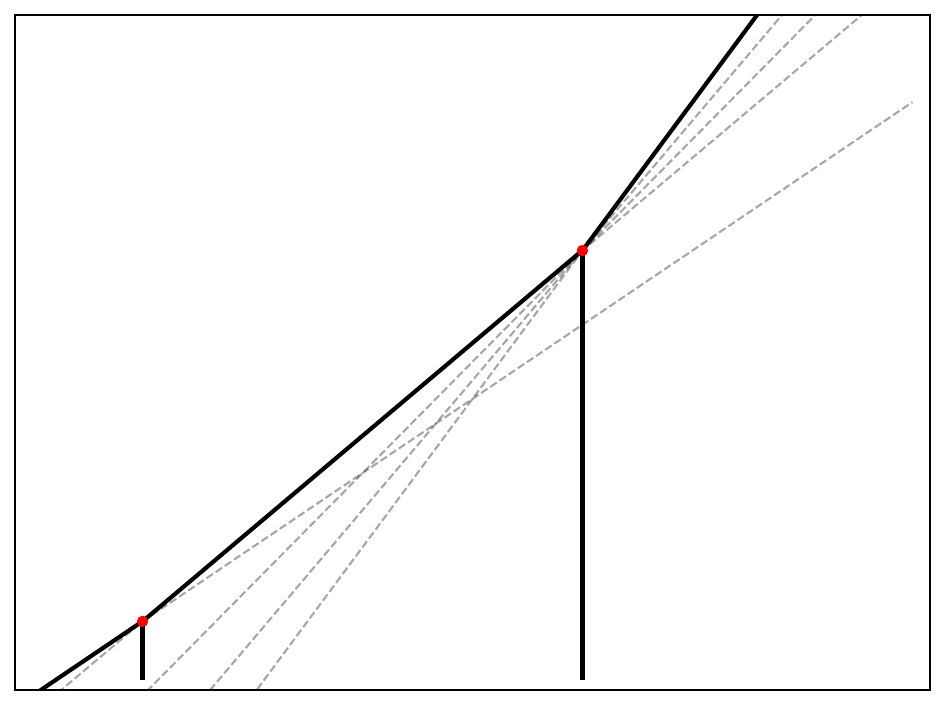}
        \caption{\microfont $0{\odot}x^{d+4} \oplus 1{\odot}x^{d+3} \oplus 2{\odot}x^{d+2} \oplus 3{\odot}x^{d+1} \oplus 0{\odot}x^d$}\label{plot_a}
    \end{subfigure}\hfill
    \begin{subfigure}[b]{0.31\textwidth}
        \centering
        \includegraphics[width=\textwidth]{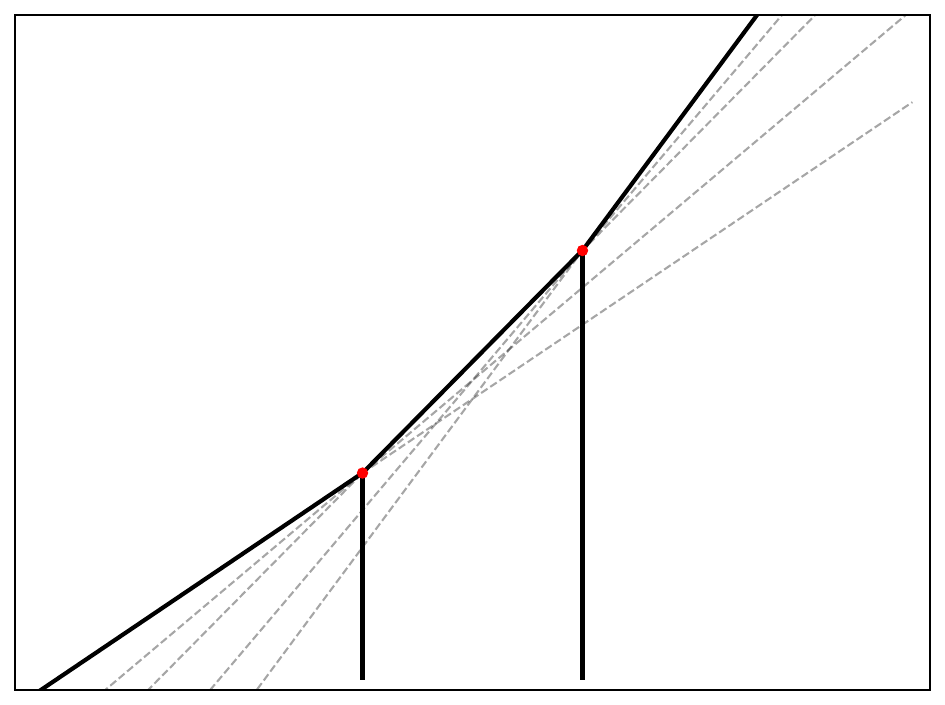}
        \caption{\microfont $0{\odot}x^{d+4} \oplus 1{\odot}x^{d+3} \oplus 2{\odot}x^{d+2} \oplus 1{\odot}x^{d+1} \oplus 0{\odot}x^d$}\label{plot_b}
    \end{subfigure}\hfill
    \begin{subfigure}[b]{0.31\textwidth}
        \centering
        \includegraphics[width=\textwidth]{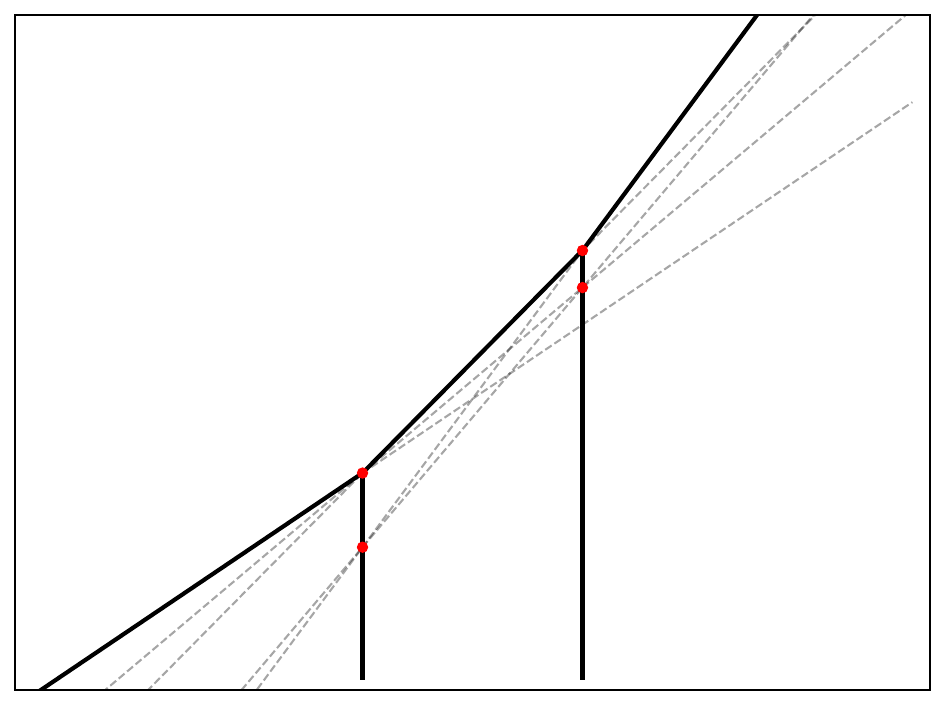}
        \caption{\microfont $0{\odot}x^{d+4} \oplus {-1}{\odot}x^{d+3} \oplus 2{\odot}x^{d+2} \oplus 1{\odot}x^{d+1} \oplus 0{\odot}x^d$}\label{plot_c}
    \end{subfigure}

    \vspace{0.7em} 
    \begin{subfigure}[b]{0.31\textwidth}
        \centering
        \includegraphics[width=\textwidth]{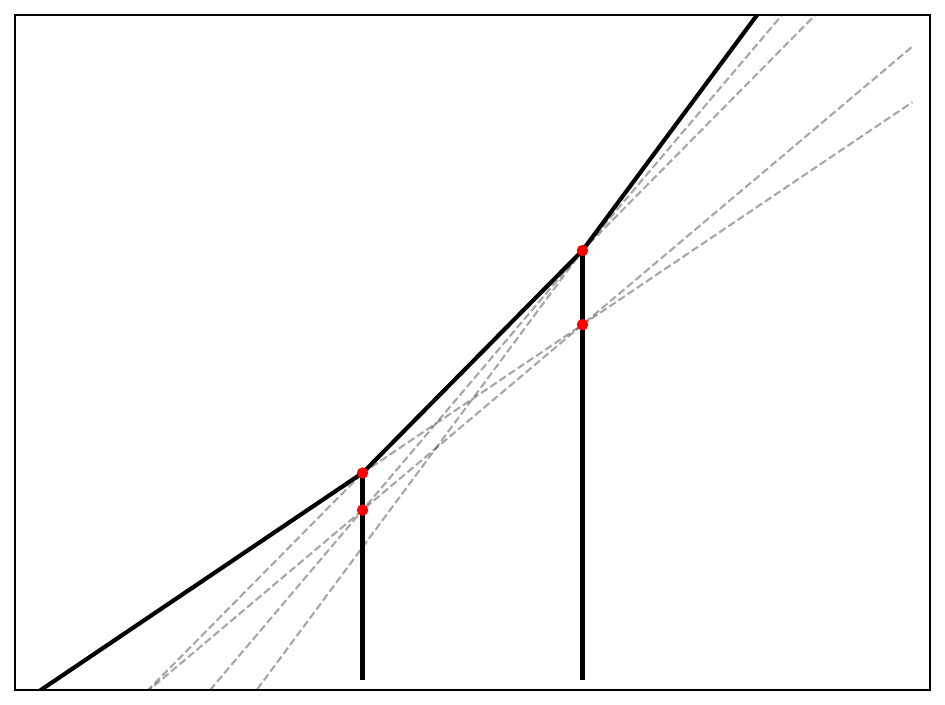}
        \caption{\microfont $0{\odot}x^{d+4} \oplus 1{\odot}x^{d+3} \oplus 2{\odot}x^{d+2} \oplus {-1}{\odot}x^{d+1} \oplus 0{\odot}x^d$}\label{plot_d}
    \end{subfigure}\hfill
    \begin{subfigure}[b]{0.31\textwidth}
        \centering
        \includegraphics[width=\textwidth]{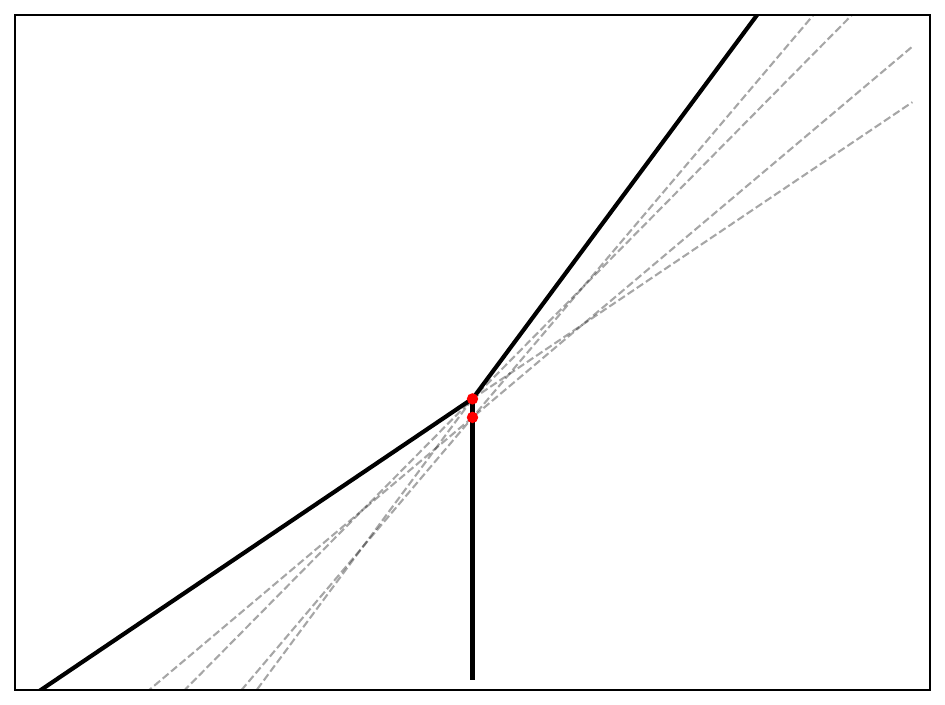}
        \caption{\microfont $0{\odot}x^{d+4} \oplus {-1}{\odot}x^{d+3} \oplus 0{\odot}x^{d+2} \oplus {-1}{\odot}x^{d+1} \oplus 0{\odot}x^d$}\label{plot_e}
    \end{subfigure}\hfill
    \begin{subfigure}[b]{0.31\textwidth}
        \centering
        \includegraphics[width=\textwidth]{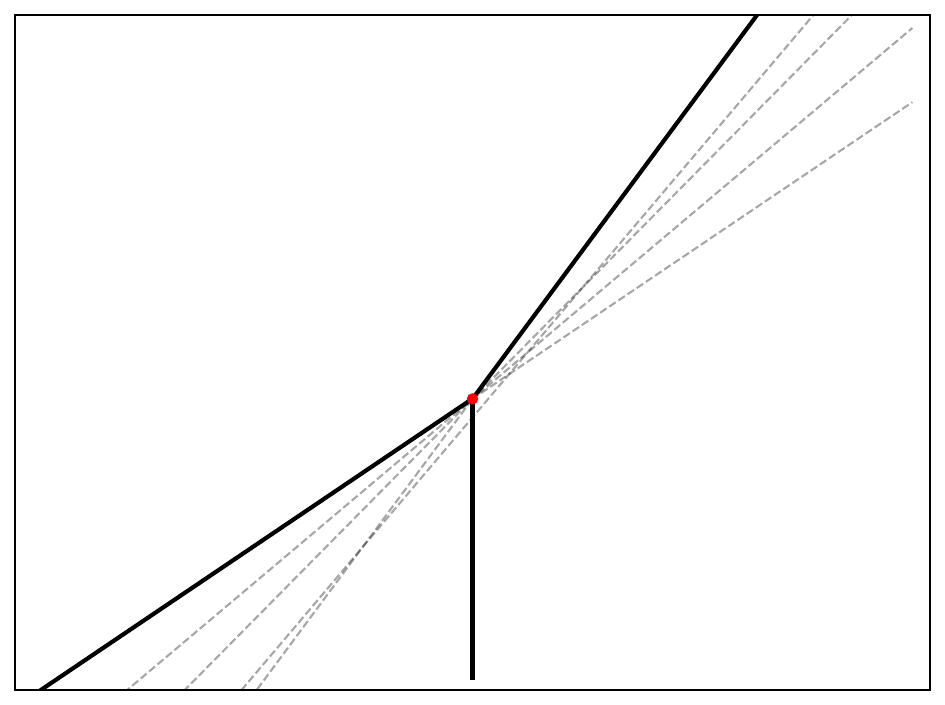}
        \caption{\microfont $0{\odot}x^{d+4} \oplus {-1}{\odot}x^{d+3} \oplus 0{\odot}x^{d+2} \oplus 0{\odot}x^{d+1} \oplus 0{\odot}x^d$}\label{plot_f}
    \end{subfigure}

    \vspace{0.7em} 
    \begin{subfigure}[b]{0.31\textwidth}
        \centering
        \includegraphics[width=\textwidth]{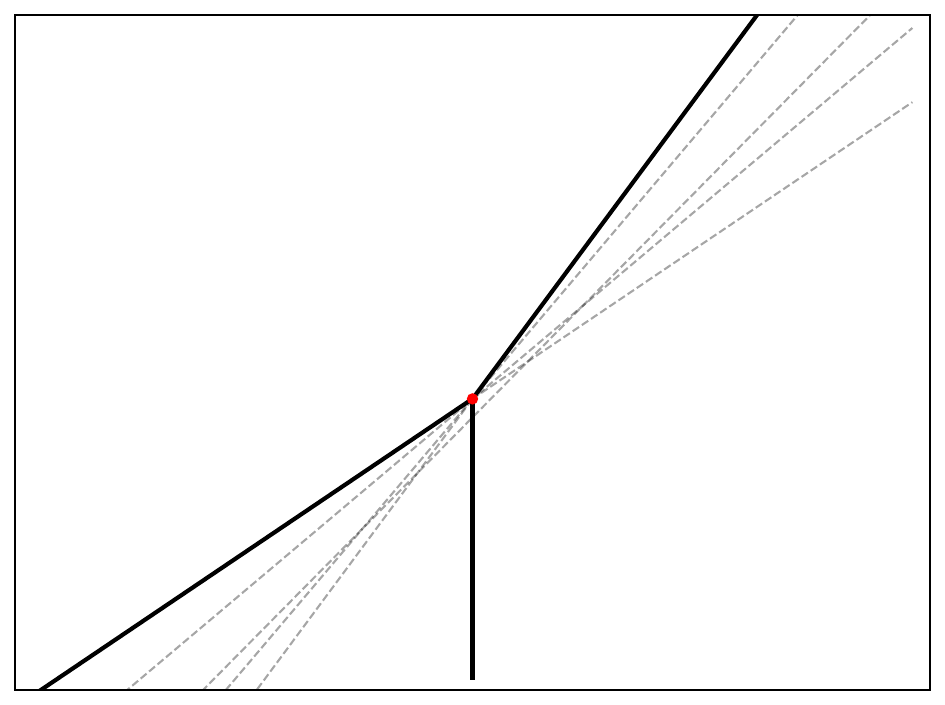}
        \caption{\microfont $0{\odot}x^{d+4} \oplus 0{\odot}x^{d+3} \oplus {-1}{\odot}x^{d+2} \oplus 0{\odot}x^{d+1} \oplus 0{\odot}x^d$}\label{plot_g}
    \end{subfigure}\hfill
    \begin{subfigure}[b]{0.31\textwidth}
        \centering
        \includegraphics[width=\textwidth]{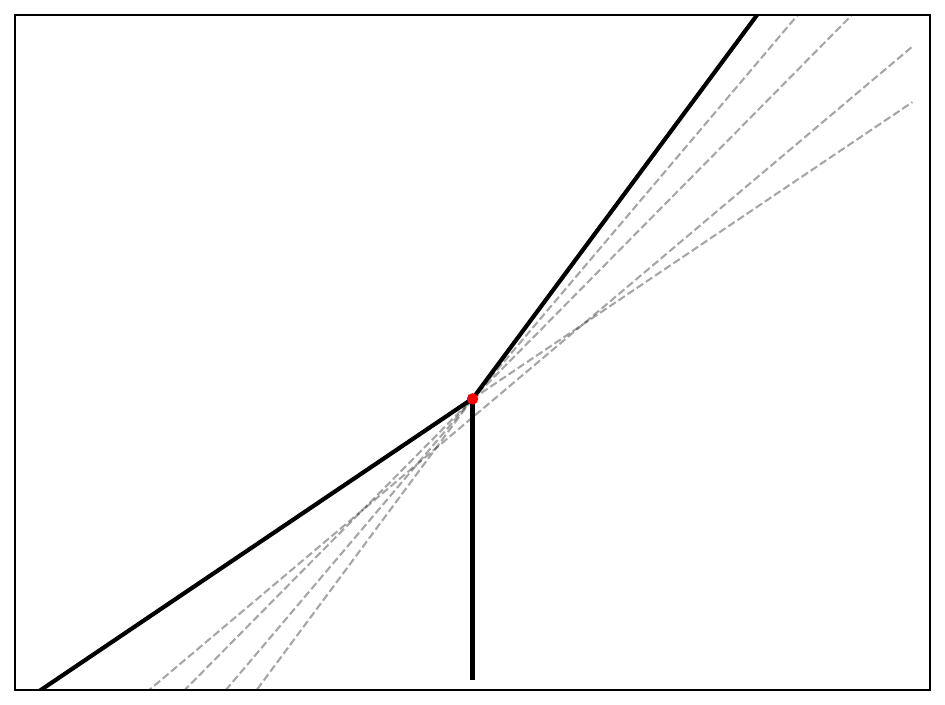}
        \caption{\microfont $0{\odot}x^{d+4} \oplus 0{\odot}x^{d+3} \oplus 0{\odot}x^{d+2} \oplus {-1}{\odot}x^{d+1} \oplus 0{\odot}x^d$}\label{plot_h}
    \end{subfigure}\hfill
    \begin{subfigure}[b]{0.31\textwidth}
        \centering
        \includegraphics[width=\textwidth]{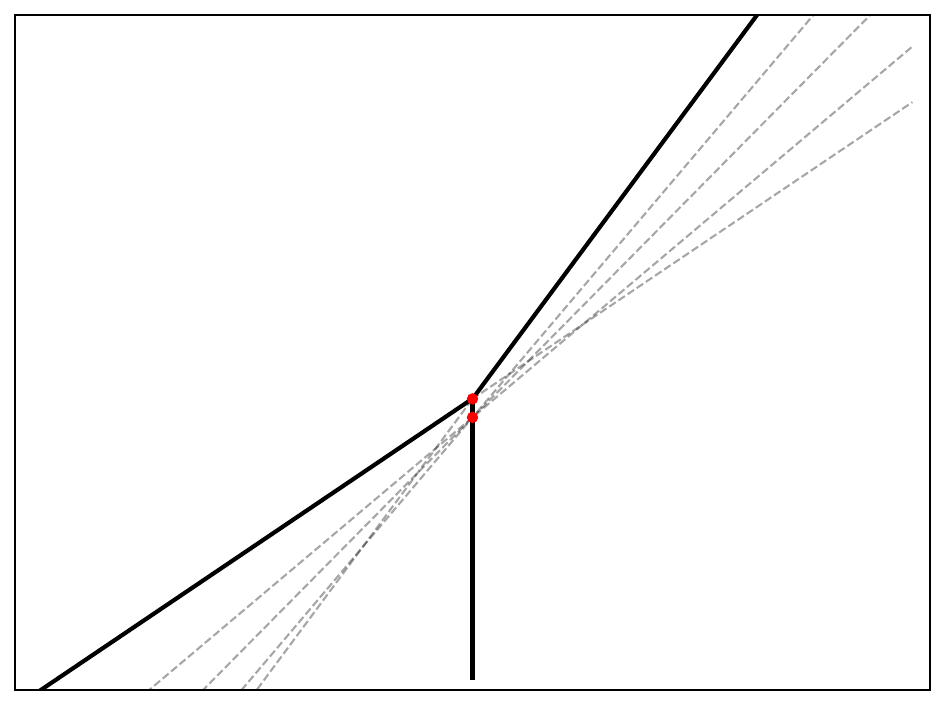}
        \caption{\microfont $0{\odot}x^{d+4} \oplus {-1}{\odot}x^{d+3} \oplus {-1}{\odot}x^{d+2} \oplus {-1}{\odot}x^{d+1} \oplus 0{\odot}x^d$}\label{plot_i}
    \end{subfigure}

    \caption{Plots of tropicalizations}
    \label{fig:tropical_3x3_polynomials}
\end{figure}

Now, we describe the distinction between $C_0$ and $C_2$ in terms of these plots. Cases \ref{plot_f}-\ref{plot_h} have equivalent structures of the intersections of the lines and occur for both $C_0$ and $C_2$. Cases \ref{plot_a}, \ref{plot_b} and \ref{plot_e} have the unique structure, but they occur for both $C_0$ and $C_2$. Case \ref{plot_i} has a unique structure, but occurs only for $d = 4k$. Thus, these seven cases do not provide a way to distinguish $C_0$ from $C_2$. The remaining two cases are more interesting:
cases \ref{plot_c} and \ref{plot_d} have equivalent structures of the intersections of the lines and occur only for $C_2$, namely, the intersection of three lines at two points in the plot and one triple intersection at a tropical root.

Based on these observations, we propose the following conjecture.

\begin{deff}
    Let $f$ be a tropical polynomial. A point is called \it{multiple} if at least three lines corresponding to the monomials of $f$ intersect at that point. We may construct a graph with vertices corresponding to the monomials of $f$: we connect two vertices by an edge if the intersection of the corresponding lines on the plot is multiple. We call this graph the \it{combinatorial type} of $f$. The combinatorial type is defined up to graph isomorphism.
\end{deff}

\begin{conjecture}
    \begin{enumerate}
        \item  Let $A$ be a support. Each topological equivalence class of polynomials in $\CC^A$ is uniquely defined by the set of combinatorial types of the tropicalizations of its branches at infinity. Call it \it{the set of combinatorial types} of a class.
        \item Consider the number of topological equivalence classes in $\CC^{A+d}$ as a function $N_A(d)$. We conjecture that $N_A(d)$ is periodic in $d$. 
        \item Let $p$ be the period of $N_A$. There is a bijection between the topological equivalence classes in $\CC^A$ and those in $\CC^{A+p}$ whose sets of combinatorial types coincide.
        \item Each topological equivalence class of polynomials in $\CC^A$ has such a combinatorial type, which occurs for at most one topological equivalence class of polynomials in $\CC^{A+d}$ for each $d\in \ZZ$. 
    \end{enumerate}  
\end{conjecture}

\end{section}


\begin{thebibliography}{99} 

\bibitem{AVG} V.~I.~Arnold, S.~M.~Gusein-Zade, and A.~N.~Varchenko, \emph{Singularities of Differentiable Maps. Vol.~I: The Classification of Critical Points, Caustics and Wave Fronts}, Moscow: Nauka, 1982, 304~pp.

\bibitem{BeBrM} B.~Bertrand, E.~Brugall\'e, and G.~Mikhalkin, ``Tropical Open Hurwitz numbers,'' \emph{Rendiconti del Seminario Matematico della Universit\`a di Padova} \textbf{125} (2011), 157--171. doi:10.4171/RSMUP/125-10. 

\bibitem{BS} E.~Brugalle , K.~Shaw , ``A bit of tropical geometry,''
\url{https://arxiv.org/abs/1311.2360} [math.AG] (2013).


\bibitem{CJM} R.~Cavalieri, P.~Johnson, and H.~Markwig, ``Tropical Hurwitz numbers,'' \emph{Journal of Algebraic Combinatorics} \textbf{32} (2010), 241--265. doi:10.1007/s10801-009-0213-0.

\bibitem{EV} A.~Esterov, A.~Voorhaar, ``Basecondary polytopes,'' \url{https://arxiv.org/abs/2411.02234} [math.AG, math.CO] (2024).

\bibitem{GKZ} I.~M.~Gel'fand, A.~V.~Zelevinsky, and M.~M.~Kapranov, ``Discriminants of polynomials in several variables and triangulations of Newton polyhedra,'' \emph{Algebra i Analiz} \textbf{2}, no.~3 (1990), 1--62; English transl.: \emph{Leningrad Mathematical Journal} \textbf{2}, no.~3 (1991), 499--505.

\bibitem{2} M.~Golubitsky and V.~Guillemin, \emph{Stable Mappings and Their Singularities}, Moscow: Mir, 1977.

\bibitem{4} I.~P.~Goulden and D.~M.~Jackson, ``The combinatorial relationship between trees, cacti and certain connection coefficients for the symmetric group,'' \emph{European Journal of Combinatorics} \textbf{13}, no.~5 (1992), 357--365. doi:10.1016/S0195-6698(05)80015-0.

\bibitem{GJV} I.~P.~Goulden, D.~M.~Jackson, and A.~Vainshtein, ``The number of ramified coverings of the sphere by the torus and surfaces of higher genera,'' \emph{Annals of Combinatorics} \textbf{4}, no.~1 (2000), 27--46. doi:10.1007/PL00001274.

\bibitem{KaKhE} B.~Ya.~Kazarnovskii, A.~G.~Khovanskii, and A.~I.~Esterov, ``Newton polytopes and tropical geometry,'' \emph{Russian Mathematical Surveys} \textbf{76}, no.~1 (2021), 91--175; original Russian version: \emph{Uspekhi Matematicheskikh Nauk} \textbf{76}, no.~1 (2021), 95--190. doi:10.1070/RM9937.

\bibitem{LZV} S.~K.~Lando and D.~Zvonkine, ``On multiplicities of the Lyashko--Looijenga mapping on strata of the discriminant,'' \emph{Functional Analysis and Its Applications} \textbf{33}, no.~3 (1999), 178--188. doi:10.1007/BF02465202.

\bibitem{LSH} E.~Shuvaeva, \emph{Multiplicity of the Lyashko--Looijenga Map for Laurent Polynomials}, Bachelor's thesis, HSE University, 2020. Available at: \url{https://www.hse.ru/en/edu/vkr/369849382}. 

\bibitem{3} D.~Trotman, ``Stratification theory,'' in J.~L.~Cisneros Molina, D.~Tr\'ang L\^e, and J.~Seade (eds.), \emph{Handbook of Geometry and Topology of Singularities I}, Springer, Cham, 2020, 243--273. doi:10.1007/978-3-030-53061-7\_4. 

\bibitem{Vo} A.~Voorhaar, ``The Newton Polytope of the Morse Discriminant of a Univariate Polynomial,'' \emph{Advances in Mathematics} 432 (2023), 109275.
doi:10.1016/j.aim.2023.109275.

\bibitem{ZD} S.~Zdravkovska, ``The topological classification of polynomial mappings,'' \emph{Uspekhi Matematicheskikh Nauk} \textbf{25}, no.~4(154) (1970), 179--180.

\bibitem{ZV} A.~Zvonkin, ``Towards Topological Classification of Univariate Complex Polynomials,'' in \emph{Formal Power Series and Algebraic Combinatorics: 12th International Conference, FPSAC'00}, Springer, 2000, 76--87. \end{thebibliography}
\end{document}